\newtheorem{remark}{Remark}
\newtheorem{definition}{Definition}
\newtheorem{lemma}{Lemma}
\newtheorem{question}{Question}
\newtheorem{corollary}{Corollary}
\newtheorem{theorem}{Theorem}
\DeclareMathOperator\supp{supp}
\newtheorem{proposition}{Proposition}
\newcommand{\squishlist}{
 \begin{list}{$\bullet$}
  { \setlength{\itemsep}{0pt}
     \setlength{\parsep}{3pt}
     \setlength{\topsep}{3pt}
     \setlength{\partopsep}{0pt}
     \setlength{\leftmargin}{1.5em}
     \setlength{\labelwidth}{1em}
     \setlength{\labelsep}{0.5em} } }
\newcommand{\squishlisttwo}{
 \begin{list}{$\bullet$}
  { \setlength{\itemsep}{0pt}
     \setlength{\parsep}{0pt}
    \setlength{\topsep}{0pt}
    \setlength{\partopsep}{0pt}
    \setlength{\leftmargin}{2em}
    \setlength{\labelwidth}{1.5em}
    \setlength{\labelsep}{0.5em} } }
\newcommand{\squishend}{
  \end{list}  }
  \DeclareMathOperator*{\argmax}{arg\,max}
\newcommand\RectTri[4][thick,green!50!black,text=black]{
\coordinate  (C) at #2;
\coordinate (B) at #3;
\coordinate  (A) at #4;


\draw[#1] 
  (C) -- 
  node[auto] {$a'_{\ell}$} (A) -- 
  node[auto] {$N'_{\ell}$} (B) --
  node[auto] {$t_{\ell}$} 
  (C)
  pic ["$\theta_{\ell}$",draw,black,thick,angle radius=2.2cm] {angle = B--C--A} 
  pic ["$\phi_{\ell}$",draw,black,thick,angle radius=1cm] {angle = C--A--B}
  ;
}
\begin{document}

\title{Phase Retrieval for $L^2([-\pi,\pi])$ via the  Provably Accurate and Noise Robust Numerical Inversion of Spectrogram Measurements}

\author{Mark Iwen\thanks{Michigan State University, Department of Mathematics, and the Department of Computational Mathematics, Science and Engineering (CMSE), \texttt{markiwen@math.msu.edu}.  Supported in part by NSF DMS 1912706.}, 
Michael Perlmutter\thanks{University of California, Los Angeles, Department of Mathematics, \texttt{perlmutter@math.ucla.edu}}, 
Nada Sissouno\thanks{Technical University of Munich, Faculty of Mathematics, Garching b. M\"unchen,-
\& Helmholtz Zentrum M\"unchen, ICT Information Communication Technology, Neuherberg, Germany \texttt{sissouno@ma.tum.de}. The author acknowledges partial funding from an Entrepreneurial Award in the Program "Gobal Challenges for Women in Math Science" funded by the Faculty of Mathematics at the Technical University of Munich.}, 
Aditya Viswanathan\thanks{University of Michigan -- Dearborn, Department of Mathematics \& Statistics, \texttt{adityavv@umich.edu}. Supported in part by NSF DMS 2012238.}} 

\maketitle

\begin{abstract}
In this paper, we focus on the approximation of smooth functions $f: [-\pi, \pi] \rightarrow \mathbb{C}$, up to an unresolvable global phase ambiguity, from a finite set of Short Time Fourier Transform (STFT) magnitude (i.e., spectrogram) measurements.  Two algorithms are developed for approximately inverting such measurements, each with theoretical error guarantees establishing their correctness.  A detailed numerical study also demonstrates that both algorithms work well in practice and have good numerical convergence behavior.

\end{abstract}

\vspace{1in}

\section{Introduction}\label{sec: intro}

We consider the approximate recovery of a smooth function $f: \mathbb{R} \rightarrow \mathbb{C}$ supported inside of a compact interval $I \subset \mathbbm{R}$
from a finite set of noisy spectrogram measurements of the form
\begin{equation*}
Y_{\omega,\ell} \coloneqq \left|\int_{-\infty}^{\infty}f(x)\tilde{m}\left(x-\frac{2\pi}{L}\ell\right)\mathbbm{e}^{-\mathbbm{i}x\omega}dx\right|^2+\eta_{\omega,\ell}.
\end{equation*}
Here $\tilde{m}: \mathbb{R} \rightarrow \mathbb{C}$ is a known mask, or window, and the $\eta_{\omega,\ell}$ are arbitrary additive measurement errors.  Without loss of generality, we will assume that $I \subseteq [-\pi,\pi]$ and seek to characterize how well the function $f$, with its domain restricted to $[-\pi,\pi]$, 
can be approximated using $d L$ measurements of this form for $d$ frequencies $\omega$ at each of $L$ shifts $\ell$.  
Toward that end, we present two algorithms which can provably approximate any such function $f$ (belonging to a general regularity class defined below in Definition~\ref{def: Fourier decay}) up to a global phase multiple using spectrogram measurements of this type  resulting from two different types of masks $\tilde{m}$.  As we shall see, both algorithms ultimately work by approximating finitely many Fourier series coefficients of $f$.

Inverse problems of this type appear in many applications including optics \cite{walther1963question}, astronomy \cite{Fienup1987}, and speech signal processing  \cite{griffin1984signal,balan2006signal} to name just a few.  In this paper we are primarily motivated by phaseless imaging applications such as ptychography \cite{R2008ptychography}, in which Fourier magnitude data is collected from overlapping shifts of a mask/probe (e.g., a pinhole) across a specimen and then used to recover the specimen's image.  Indeed, these types of phaseless imaging applications directly motivate the types of masks $\tilde{m}: \mathbb{R} \rightarrow \mathbb{C}$ considered below.  In particular, we consider two types of masks $\tilde{m}$ including both $(i)$ relatively low-degree trigonometric polynomial masks representing masking the sample $f$ with shifts of a periodic structure/grating, and $(ii)$ compactly supported masks representing the translation of, e.g., an aperture/pinhole across the sample during imaging.  Note that first type of periodic masks are reminicent of some of the Coded Diffraction Pattern type measurements for phase retrieval analyzed by Cand{\`e}s et al. in the discrete (i.e., finite-dimensional $f$ and $\tilde{m}$) setting \cite{candes2015phase,CANDES2015277}.   (See Section 1 of \cite{Perlmutter2020} for a related discussion.)  The second type of compactly supported masks, on the other hand, correspond more closely to standard ptychographic setups in which Fourier magnitude data is collected from small overlapping portions of a large sample $f$ in order to eventually recover its global image.

Although a number of algorithms exhibiting great empirical success were designed decades ago for phaseless imaging, e.g., \cite{fienup1982phase}, \cite{GSaxtonAltProj}, \cite{griffin1984signal}, the mathematical community has only recently begun to  undertake the challenge of designing measurement setups and corresponding recovery algorithms with provable accuracy and reconstruction guarantees.  The vast majority of those theoretical works have only addressed discrete (i.e., finite-dimensional) phase retrieval problems, (see  e.g., \cite{balan2006signal}, \cite{alexeev2014phase}, \cite{candes2015phase}, \cite{CANDES2015277}, \cite{gross2015improved}, \cite{FKMS20}) where the signal of interest and measurement masks are both discrete vectors and where the relevant measurement vectors are generally random and globally supported. 

In this paper, we develop a provably accurate numerical method\footnote{Numerical implementations of the methods proposed here are available at \url{https://bitbucket.org/charms/blockpr}.} for approximating smooth functions $f: \mathbb{R} \rightarrow \mathbb{C}$ from a finite set of Short-Time Fourier Transform (STFT) magnitude measurements. Though there has been general work concerning the uniqueness and stability of reconstruction from STFT magnitude measurements  in this setting (see, e.g., recent work by Alaifari, Cheng, Daubechies, and their collaborators \cite{Alaifari2020}, \cite{Cheng2020}), to the best of our knowledge, no prior work exists concerning the development or analysis of provably accurate numerical methods for actually carrying out such reconstructions from a finite set of such measurements.  Perhaps the closest prior work is that of Thakur \cite{thakur2011reconstruction}, who gives an algorithm for the reconstruction of real-valued bandlimited functions up to a global sign from the absolute values of their point samples, and that of Gr\"{o}chenig \cite{G2019}, who considers/surveys similar results in shift-invariant spaces.  Other related work includes that of Alaifari et al. \cite{alaifari2019stable}, which proves (among other things) that one can not hope to stably recover a periodic function up to a single global phase using a trigonometric polynomial mask of degree $\rho/2$, as done below, unless its Fourier series coefficients do not vanish on any $\rho$ consecutive integer frequencies in between two other frequencies with nonzero Fourier series coefficients.  This helps to motivate the function classes we consider recovering here. (In particular, if a function $f$ satisfies Definition~\ref{def: Fourier decay} below, then any strings of zero Fourier series coefficients in $\{ \hat{f}(n) \}_{n \in \mathbbm{Z}}$ longer than a certain finite length must be part of an infinite string of zero Fourier coefficients associated with all frequencies beyond a finite cutoff.)  We also refer the reader to \cite{IMP2019} and \cite{Cheng2020} for similar considerations in the discrete setting.

\subsection{Problem Setup and Main Results}
 
Let $\tilde{m},f:\mathbb{R}\rightarrow\mathbb{C}$ be  $C^k$-functions for some $k\geq 2$.  Let $d$ be an odd number, and let $K$ and $L$ divide $d$. Let 
$
    \mathcal{D}=\{-\frac{d-1}{2},\ldots,0,\ldots, \frac{d-1}{2}\},
$
and
let
$\mathbf{Y}=(Y_{\omega,\ell})_{\omega,\ell\in \mathcal{D}}$ be the $d\times d$ measurement matrix defined by 
\begin{equation}\label{eq:conti-meas2}
Y_{\omega,\ell} \coloneqq \left|\int_{\mathbb{R}} f(x)\tilde{m}\left(x-\frac{2\pi}{d}\ell\right)\mathbbm{e}^{-\mathbbm{i}x\omega}dx\right|^2+\eta_{\omega,\ell},
\end{equation}
where  ${\bm{ \eta}}=(\eta_{\omega,\ell})_{\omega,\ell\in\mathcal{D}}$ is an arbitrary additive noise matrix. The goal of this paper is to address the following question. 
\begin{question}
Under what conditions on $f$ and $\tilde{m}$ can we produce an efficient and noise robust algorithm which provably recovers $f$ from the $K\times L$ measurement matrix $\mathbf{Y}_{K,L}$ obtained by subsampling equispaced entries of $\mathbf{Y}$.
\end{question}

In order to partially answer this question, we will assume that $f$ satisfies a regularity assumption defined below in Definition \ref{def: Fourier decay} and also that  one of the following two assumptions hold:
\begin{enumerate}
\item \label{as: trigpoly}$f$ is compactly supported with $\text{supp}(f)\subseteq [-\pi,\pi]$ and $\tilde{m}$ is a trigonometric polynomial given  by
\begin{equation*}
    \tilde{m}(x)=\sum_{p=-\rho/2}^{\rho/2}\widehat{m}(p)\mathbbm{e}^{\mathbbm{i}px}
\end{equation*}
for some even number $\rho < d$ and some complex numbers  $\widehat{m}(-\rho/2),\ldots,\widehat{m}(0),\ldots,\widehat{m}(\rho/2)$. 
\item \label{as: compact}Both $f$ and $\tilde{m}$ are compactly supported with
$\text{supp}(f)\subseteq (-a,a)$ and   $\text{supp}(\tilde{m})\subseteq (-b,b)$
for some \newline $a$ and $b$ such that
$a+b\leq \pi$.
\end{enumerate}

We will introduce a four-step method which relies on recovering the Fourier coefficients of $f$. In our discretization step, we approximate the mask $\tilde{m}$ by a function with finitely many nonzero Fourier coefficients. Therefore, we effectively regard the mask as being compactly supported in the frequency domain.  As mentioned above, several previous works, including \cite{IMP2019}, \cite{alaifari2019stable}, and \cite{Cheng2020}, have noted that this implies that the recovery of $f$ is impossible if $f$ has many consecutive Fourier coefficients which are equal to zero followed by nonzero Fourier coefficients at higher frequencies. Moreover, if there are many consecutive small Fourier coefficients followed by larger coefficients at higher frequencies, the problem is inherently unstable.  
Therefore, we will remove such pathological functions from consideration by assuming that our function $f$ is a member of the following function class for a suitable choice of $\beta$. This choice of $\beta$ will depend on whether $f$ and $\tilde{m}$ satisfy Assumption \ref{as: trigpoly} or Assumption \ref{as: compact}, respectively.   

\begin{definition}\label{def: Fourier decay} Let $\beta$ be a positive integer and let $D_n = \max_{|m-n|<\beta/2} |\widehat{f}(m)|$.
We say that $f$ has \emph{$\beta$ Fourier decay} if $D_n\geq D_{n'}$ whenever $|n|\leq |n'|$.
\end{definition}

A useful property of this function class, which follows immediately from the definition, is summarized in the following remark.
 \begin{remark} \label{la:n-sequence}
Suppose $f$ has $\beta$ Fourier decay, and let $a, n \in \mathbbm{Z}$ with $|a|<|n|$.  
Then the string of $\beta-1$ consecutive integers centered around $a$ contains an integer $m$ such that  $|\widehat{f}(m)|\geq|\widehat{f}(n)|$.
\end{remark}


 We will show that functions satisfying Definition \ref{def: Fourier decay} can be reconstructed from $\mathbf{Y}$ using the following four-step approach:
 \begin{enumerate}
    \item Approximate the matrix of continuous measurements $\mathbf{Y}$, defined in terms of functions $f$ and $\tilde{m}$, by a matrix of discrete measurements $\mathbf{T}'$, defined in terms of corresponding vectors $\mathbf{x}$ and $\mathbf{z}$.
    \item Apply a discrete Wigner distribution deconvolution method \cite{Perlmutter2020} to recover a portion of the Fourier autocorrelation matrix \label{step: wigner} $\widehat{\mathbf{x}}\widehat{\mathbf{x}}^*$. 
    \item Recover $\widehat{\mathbf{x}}$, the discrete Fourier transform of $\mathbf{x}$,  via a greedy angular synchronization scheme along the lines of the one used in \cite{IVW16}.
    \item Estimate $f$ by a trigonometric polynomial with coefficients given by $\widehat{\mathbf{x}}$.
\end{enumerate}
The details of step \ref{step: wigner} are quite different depending on whether $f$ and $\tilde{m}$ satisfy Assumption \ref{as: trigpoly} or Assumption \ref{as: compact}. However, we emphasize that the other three steps of the process are identical in either case.
The result of this approach is two algorithms which allow for the reconstruction of $f$ under either Assumption \ref{as: trigpoly} or \ref{as: compact}, as well as two theorems providing theoretical guarantees. 
The following main results are variants of Corollaries \ref{cor: trigpoly} and \ref{cor: compact} presented in Section \ref{sec: convergence theorems}.

\begin{theorem}\label{thm: compact informal} Let $\mathcal{C}^k_{\rho/2}$ be the set of all compactly supported functions $f:\mathbbm{R} \rightarrow \mathbbm{C}$ with $\text{supp}(f)\subseteq [-\pi,\pi]$ that are $C^k$-smooth for some $k\geq 5$ and that have $\rho/2$ Fourier decay. Then, there exist degree $\rho/2$ trigonometric polynomial masks $\tilde{m}$  such that for all $f \in \mathcal{C}^k_{\rho/2}$, $K = d \geq 2 \rho + 6$, and $L$ dividing $d$ with $2+\rho \leq L \leq 2\rho$ the trigonometric polynomial $f_e(x)$ output by Algorithm \ref{Big Algorithm} is guaranteed to satisfy
\begin{align*}
\min_{\theta\in[0,2\pi]} \left\|\mathbbm{e}^{\mathbbm{i}\theta} f - f_e \right\|_{L^2([-\pi,\pi])}^2 
\leq   C_{f,m}\bigg(
\left(\frac{1}{d}\right)^{k-9/2} + \frac{d^3}{L^{1/2}}\|\mathbf{\bm{\eta}_{d,L}}\|_F \bigg),
\end{align*}
where $\mathbf{\bm{\eta}_{d,L}}$ is the $d\times L$ matrix obtained by subsampling equispaced entries of $\bm{\eta}$ and $C_{f,m}$ is a constant only depending on $f,\widetilde{m},$ and $k$.
\end{theorem}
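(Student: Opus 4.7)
The plan is to decompose the $L^2$ error via Parseval on $[-\pi,\pi]$. Since $f_e$ is returned as a trigonometric polynomial of degree about $d/2$ with Fourier coefficients $\{c_n\}_{|n|\leq d/2}$ produced by Algorithm~\ref{Big Algorithm}, we have
\[
\min_{\theta \in [0,2\pi]} \|\mathbbm{e}^{\mathbbm{i}\theta} f - f_e\|_{L^2([-\pi,\pi])}^2 = 2\pi \min_{\theta}\sum_{|n|\leq d/2}\bigl|\mathbbm{e}^{\mathbbm{i}\theta}\widehat{f}(n) - c_n\bigr|^2 + 2\pi\sum_{|n|>d/2}|\widehat{f}(n)|^2.
\]
The tail sum decays like $d^{-(2k-1)}$ by the standard Fourier decay of a compactly supported $C^k$ function, and is absorbed into the $(1/d)^{k-9/2}$ term of the bound. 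The remainder of the argument tracks the first sum through the four-step pipeline, producing the additive contributions to the bound.

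For Step 1, the sampled vectors $\mathbf{x}=(f(2\pi j/d))_j$ and $\mathbf{z}=(\tilde{m}(2\pi j/d))_j$ feed into discrete measurements $\mathbf{T}'$ that approximate the rescaled $\mathbf{Y}_{d,L}$; since $f(x)\tilde{m}(x-\tfrac{2\pi\ell}{d})\mathbbm{e}^{-\mathbbm{i}x\omega}$ is $C^k$-smooth and compactly supported inside $[-\pi,\pi]$, the periodic trapezoidal rule yields a quadrature error of order $d^{-k}$ per integral, and of the same order per entry of the clean part of $\mathbf{Y}_{d,L}$ after the squared modulus is taken. Step 2 applies the discrete Wigner-distribution deconvolution of \cite{Perlmutter2020}: under Assumption~\ref{as: trigpoly} the banded entries $(\widehat{\mathbf{x}}\widehat{\mathbf{x}}^*)_{m,n}$ with $|m-n|\leq \rho/2$ are recovered by inverting a block-diagonal linear system whose conditioning is governed entirely by the mask's Fourier coefficients $\{\widehat{m}(p)\}_{|p|\leq \rho/2}$ and by $L$. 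The existence portion of the theorem is precisely the assertion that a trigonometric polynomial $\tilde{m}$ of degree $\rho/2$ can be chosen so that this system is uniformly well conditioned; then $\bm{\eta}_{d,L}$ passes through as a Frobenius-norm perturbation of the banded autocorrelation estimate of size $\|\bm{\eta}_{d,L}\|_F \cdot L^{-1/2}$, up to mask-dependent constants absorbed into $C_{f,m}$.

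The $\rho/2$-Fourier-decay hypothesis then enters in Step 3, the greedy angular synchronization in the spirit of \cite{IVW16}. Diagonal entries of the banded estimate furnish approximate magnitudes $|\widehat{x}_n|$, while off-diagonal entries give approximate relative phases $\widehat{x}_m\overline{\widehat{x}_n}$ for $|m-n|\leq\rho/2$. The algorithm initializes at an anchor near $n=0$ and propagates phases outward by dividing by a large nearby coefficient. Remark~\ref{la:n-sequence} is the crucial structural payoff: every window of $\rho/2$ consecutive frequencies centered at some $a$ contains a coefficient at least as large in magnitude as $|\widehat{f}(n)|$ for every $|n|>|a|$, so no hop ever divides by a coefficient smaller than the one it is trying to recover. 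Tracking the $O(d)$ hops and the worst-case multiplicative amplification per hop, followed by Parseval in Step 4, yields the $(1/d)^{k-9/2}$ and $d^3/L^{1/2}\cdot\|\bm{\eta}_{d,L}\|_F$ terms of the bound, with all remaining constants depending only on $f$, $\tilde{m}$, and $k$, and hence absorbed into $C_{f,m}$.

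I expect the main obstacle to be the careful bookkeeping of error amplification in Step 3. Greedy phase propagation is inherently fragile: without a decay hypothesis, anchor coefficients of modest magnitude could convert even tiny autocorrelation errors into order-one phase errors that, once accumulated across the $O(d)$ Fourier modes, would destroy the estimate. The $\rho/2$-Fourier-decay hypothesis is designed precisely to preclude this by forcing every hop to be anchored to a coefficient at least as large as the one being recovered; the $d^3$ prefactor reflects the worst-case compounding of a bounded-per-hop amplification across all $O(d)$ modes, and balancing this against the $d^{-k}$ quadrature rate from Step 1 and the $L^{-1/2}$ noise propagation from Step 2 gives the claimed exponent $k-9/2$.
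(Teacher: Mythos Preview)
Your four-step outline is the same architecture the paper uses (discretize $\to$ Wigner deconvolution $\to$ greedy angular synchronization $\to$ Parseval), and the paper's own proof of this theorem is a two-line reduction to Corollary~\ref{cor: trigpoly} plus Proposition~\ref{prop:mu_condition}. So at the structural level you are on track. But two of your details are wrong in ways that matter.

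First, the band of $\widehat{\mathbf{x}}\widehat{\mathbf{x}}^*$ recovered by Wigner deconvolution under Assumption~\ref{as: trigpoly} is \emph{not} $|m-n|\le \rho/2$. The dealiasing argument (equation~\eqref{eq:discrete-meas3}) gives the diagonals $|m-n|\le \kappa-1$ where $\kappa = L-\rho$; with $L\in[\rho+2,2\rho]$ this means $\kappa\in[2,\rho]$. The angular-synchronization hop size is governed by $\gamma=\kappa$, and the $\beta$-Fourier-decay hypothesis must interact with \emph{that} quantity (Theorem~\ref{thm: angular synch} needs $\beta<\gamma/2$). Your account makes the hop size and the decay parameter both equal to $\rho/2$, which does not match the actual mechanism and would not yield the stated exponents once you track the path length $b\le 2d/\gamma$ in Lemma~\ref{la:phase-error}.

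Second, the ``existence of a good mask'' part is not free: you need a quantitative lower bound on the mask constant $\mu_1$ of~\eqref{eq: mu1}, because the deconvolution step divides by $(\mathbf{F_d}(\widehat{\mathbf z}\circ S_{-\ell}\overline{\widehat{\mathbf z}}))_\omega$. The paper handles this with Proposition~\ref{prop:mu_condition}, which constructs $\tilde m$ satisfying~\eqref{eqn: a0big}--\eqref{eqn: decreasingamplitudes} so that $\mu_1^{-1}\le C_m d$. That factor of $d$ is what turns the Corollary~\ref{cor: trigpoly} prefactor $s d^{3/2}/(\kappa^2\mu_1)$ into roughly $d^{7/2}$ (with $s\approx d/2$ and $\kappa\ge 2$ fixed), which combined with $(1/s)^{k-1}$ gives $(1/d)^{k-9/2}$, and combined with $(dL)^{-1/2}\|\bm\eta_{d,L}\|_F$ gives $d^3 L^{-1/2}\|\bm\eta_{d,L}\|_F$. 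Without that explicit $\mu_1\gtrsim 1/d$ bound your noise and discretization terms cannot be quantified, and your sketch does not currently supply it.
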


\begin{proof} Apply Corollary \ref{cor: trigpoly} with $s = \lceil (d+1)/2 \rceil$ and $r = d - s -1 \geq d/2 - 2$. The assumption that $d \geq 2\rho + 6$,  implies that $\rho \leq r-1$.  Noting now that $\kappa\coloneqq L-\rho \geq 2$ and applying Proposition~\ref{prop:mu_condition} for choices of $\tilde{m}$ satisfying \eqref{eqn: a0big} with $\kappa$ replaced by $\rho$ (since  $\rho \geq \kappa$), we have that $\mu_1^{-1} \leq C_m d$ for a mask-dependent constant $C_m$.
\end{proof}

Theorem~\ref{thm: compact informal} guarantees the existence of periodic masks which allow the exact recovery of all sufficiently smooth $f$ as above as $d \rightarrow \infty$ in the noiseless case (i.e., when $\bm{\eta}=\bm{0}$).  In particular, it is shown that a single mask $\tilde{m}$ will work with all sufficiently large choices of $d$ as long as $d$ has a divisor in $[\rho+2, 2 \rho]$.  Furthermore, Theorem~\ref{thm: compact informal} demonstrates that Algorithm \ref{Big Algorithm} is robust to small amounts of arbitrary additive noise on its measurements for any fixed $d$.  We note here that the $d^3$ term in front of the noise term $\|\mathbf{\bm{\eta}_{d,L}}\|_F$ is almost certainly highly pessimistic, and the numerical results in Section~\ref{sec:Eval} indicate that the method performs well with noisy measurements in practice.  We expect that this $d^3$ dependence in our theory can be reduced, especially for more restricted classes of functions $f$ that are compatible with less naive angular synchronization approaches than the one utilized here. (See, for example, recent work on angular synchronization approaches for phase retrieval by Filbir et al. \cite{FMS20}.)

Focusing on the total number of STFT magnitude measurements \eqref{eq:conti-meas2} used by Algorithm \ref{Big Algorithm}, we can see that Theorem~\ref{thm: compact informal} guarantees that $KL \leq 2d\rho$ will suffice for accurate reconstruction when the mask $\tilde{m}$ is a trigonometric polynomial.  In particular, this is linear in $d$ for a fixed $\rho$.  As we shall see below, the situation appears more complicated when $\tilde{m}$ is compactly supported.  In particular, Theorem \ref{thm: ptycmainres} stated below requires $KL = d^2 / 3$ STFT magnitude measurements in that setting (and more generally, the argument we give here always requires $KL \geq C b d^2$, where $C$ is an absolute constant, and $b$ is the support size of the mask as per Assumption~\ref{as: compact}).

\begin{theorem}
\label{thm: ptycmainres}
Let $\mathcal{\tilde{C}}^k_{a,\beta}$ be the set of all compactly supported functions $f:\mathbbm{R} \rightarrow \mathbbm{C}$ with $\text{supp}(f)\subseteq (-a,a)$ for some $a \in (0, \pi-3/4)$ that are $C^k$-smooth for some $k\geq 4$ and have $\beta$ Fourier decay.  Let $b =3/4$, and then fix $d = L$ to be a multiple of three large enough that all of the following hold: $\beta < \lceil db / 2 \pi\rceil - 1/2$, $s = r = \lceil db / 2 \pi \rceil < d/8 - 1$, and $ 5 d / 21 < \delta = \lfloor db / \pi \rfloor < d/4$.  Finally, set $K = d/3$.  Then, for any compactly supported mask $\tilde{m}$ with $\text{supp}(\tilde{m})\subseteq (-b,b)$ and $\mu_2 > 0$ (see \eqref{eq: mu2} and \eqref{eq:def-x} for the definition of $\mu_2$) the trigonometric polynomial $f_e(x)$ output by Algorithm \ref{Big Algorithm Compact} is guaranteed to satisfy
\begin{align*}
\min_{\theta\in[0,2\pi]} &\left\|\mathbbm{e}^{\mathbbm{i}\theta} f - f_e \right\|_{L^2([-\pi,\pi])}^2 
\leq C_{f,m}\bigg(\frac{1}{\mu_2\sigma_{\min}(\mathbf{W})d^k}  + \frac{\|\mathbf{\bm{\eta}_{K,d}}\|_F}{\mu_2\sigma_{\min}(\mathbf{W})}
+ \left(\frac{1}{d}\right)^{2k-2}\bigg)
\end{align*}
for all $f \in \mathcal{\tilde{C}}^k_{a,\beta}$, where $C_{f,m}$ is a constant only depending on $f,\widetilde{m},$ and $k$. Here $\sigma_{\min}(\mathbf{W})$ denotes the smallest singular value of the $(2(d/3 - \lfloor 3d / 4\pi \rfloor)-1)\times \lceil db / 2 \pi \rceil $ partial Fourier matrix $\mathbf{W}$ defined in Section~\ref{sec: wddas2} and 
$\mathbf{\bm{\eta}_{K,d}}$ is the $K\times d$ matrix obtained by subsampling equispaced entries of $\bm{\eta}$.
\end{theorem}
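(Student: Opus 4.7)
The plan is to deduce this theorem from Corollary \ref{cor: compact}, the compact-mask analogue of Corollary \ref{cor: trigpoly} used in the proof of Theorem \ref{thm: compact informal}, by matching parameters and checking admissibility. Since $b = 3/4$ and $a < \pi - 3/4$, the support condition $a + b \leq \pi$ of Assumption \ref{as: compact} holds, so both $f$ and $\tilde{m}$ fit the hypotheses of Algorithm \ref{Big Algorithm Compact}. With $s = r = \lceil db/2\pi \rceil$, the hypothesis $\beta < \lceil db/2\pi \rceil - 1/2$ is exactly $\beta < s - 1/2$, which via Remark \ref{la:n-sequence} is the Fourier-decay property the greedy angular synchronization step (Step 3) requires in order to guarantee that every window of $\beta - 1$ consecutive frequencies in the recovered band contains an amplitude-dominant entry to which phase references can be chained.

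Next, with $K = d/3$ shifts and offset $\delta = \lfloor db/\pi \rfloor$, the partial Fourier matrix $\mathbf{W}$ of Section \ref{sec: wddas2} takes the stated dimensions $(2(d/3 - \delta) - 1) \times \lceil db/2\pi \rceil$. The arithmetic conditions $5d/21 < \delta < d/4$ together with $s, r < d/8 - 1$ ensure that $\mathbf{W}$ is at least square (so that $\sigma_{\min}(\mathbf{W})$ controls the deconvolution inversion), that $K > \delta$ so each required antidiagonal of the autocorrelation matrix $\widehat{\mathbf{x}}\widehat{\mathbf{x}}^*$ is in fact accessible, and that the frequency band covered by the recovered antidiagonals suffices to feed Step 3. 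Once these dimensional constraints are confirmed, plugging the parameter choices into Corollary \ref{cor: compact} produces the three error contributions appearing in the stated bound: a discretization error of order $1/(\mu_2 \sigma_{\min}(\mathbf{W}) d^k)$ from approximating $\tilde{m}$ by a finite Fourier expansion and propagating the resulting perturbation through Wigner deconvolution and angular synchronization; an additive-noise term $\|\bm{\eta}_{K,d}\|_F / (\mu_2 \sigma_{\min}(\mathbf{W}))$ amplified by the same conditioning factor; and a Jackson-type Fourier truncation error of order $(1/d)^{2k-2}$ coming from the $C^k$-smoothness of $f$ combined with the cutoff at frequency $\lceil db/2\pi \rceil$.

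The main obstacle will be the arithmetic bookkeeping: one must align the indexing used in Section \ref{sec: wddas2} to define $\mathbf{W}$ with the specific lower bound $5d/21 < \delta$ stated here, and verify that this is exactly the threshold needed to keep the Wigner deconvolution system from becoming underdetermined for $b = 3/4$ and $K = d/3$. One should also check that the assumption $\mu_2 > 0$ on the mask is inherited by the discretized vector $\mathbf{z}$ after the compact-support truncation, so that $\mu_2 \sigma_{\min}(\mathbf{W})$ really is the condition number governing the inversion. Once that dimensional and mask-conditioning consistency is in hand, the theorem follows from direct substitution into Corollary \ref{cor: compact}.
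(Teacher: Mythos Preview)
Your proposal is correct and follows essentially the same route as the paper: verify that the stated parameter choices meet the hypotheses of Corollary~\ref{cor: compact} (the ``Moreover'' clause with $s,r\geq db/2\pi$), then read off the bound. The paper condenses the arithmetic bookkeeping you describe into the single chain $\delta + (s+1)/2 < 5d/16 \leq K \leq 10d/21 < 2\delta$, which simultaneously verifies $s<2\kappa-1$ (so $\mathbf{W}$ has full column rank and $\sigma_{\min}(\mathbf{W})>0$ since it is Vandermonde) and $2\leq\kappa\leq\delta$; this is exactly the check you flag as the ``main obstacle.'' One small clarification: $\mu_2$ is \emph{defined} in terms of the discretized vector $\mathbf{z}$ (see \eqref{eq: mu2} and \eqref{eq:def-x}), so there is no separate ``inheritance'' step to perform---the hypothesis $\mu_2>0$ is already a statement about $\mathbf{z}$.
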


\begin{proof} We first note that $\delta + (s+1)/2 < 5d/16 \leq K \leq 10d/21 < 2 \delta$. Next, we apply Corollary \ref{cor: compact} with $s, r, \delta,$ and all other parameters set as above.  Next, we observe that $\mathbf{W}$ will be full rank given that it is a Vandermonde matrix. Therefore, $\sigma_{\min}(\mathbf{W}) > 0$ will always hold.  Finally, we note that, for any choice of $d$ and $b \leq \pi - a$, Proposition \ref{prop: mu2} guarantees the existence of a smooth and compactly supported mask $\tilde{m}$ with $\mu_2 > 0$.
\end{proof}

Theorem \ref{thm: ptycmainres} demonstrates that sufficiently smooth functions $f$ can be approximated well for measurement setups and masks having $\mu_2$ and $\sigma_{\min}(\mathbf{W})$ not too small.  Furthermore, Proposition \ref{prop: mu2} demonstrates that masks exist for which $\mu_2$ scales polynomially in $d$ (independently of $f$ and $k$).  It remains an open problem, however, to find a single compactly supported mask $\tilde{m}$ which will provably allow recovery for all choices of $d$, as well as optimal constructions of such masks more generally.  Nonetheless, our numerical results in Section~\ref{sec:Eval} demonstrate that Algorithm \ref{Big Algorithm Compact} does indeed work well in practice for a fixed compactly supported mask and that the mask we evaluate has reasonable values of $\mu_2$ for the range of choices of $d$ evaluated there.

 \subsection{Notation}\label{sec: notationa}

We will denote matrices and vectors by bold letters.
 We will let $\mathbf{M}_j$ denote the $j$-th column of a matrix $\mathbf{M}$ and, if $\mathbf{x}$ and $\mathbf{y}$ are vectors, we will let 
 \begin{equation*}
    \frac{\mathbf{x}}{\mathbf{y}}
 \end{equation*}denote their componentwise quotient. For any odd number $n$, we will let 
\begin{equation*}
    [n]_c \coloneqq \left[\frac{1-n}{2},\frac{n-1}{2}\right]\cap\mathbb{Z}
\end{equation*}
be the set of $n$ consecutive integers centered at the origin. In a slight abuse of notation, if $n$ is even, we will define $[n]_c\coloneqq [n+1]_c$, so that in either case $[n]_c$ is the smallest set of at least $n$ consecutive integers centered about the origin. We will let $d$ be an odd number, let $K$ and $L$ divide $d$, and let 
\begin{equation*}
    \mathcal{D}\coloneqq[d]_c, \quad \mathcal{K}\coloneqq[K]_c, \quad\text{and}\quad\mathcal{L}\coloneqq[L]_c.
\end{equation*}For $\ell\in\mathbb{Z}$, we let $S_\ell:\mathbb{C}^d\rightarrow\mathbb{C}^d$ be the circular shift operator defined for $\mathbf{x}=(x_p)_{p\in\mathcal{D}}$ by  
\begin{equation*}(S_{\ell}\mathbf{x})_p=\mathbf{x}_{p+\ell},
\end{equation*}
where the addition $p+\ell$ is interpreted to mean the unique element of $\mathcal{D}$ which is equivalent to $p+\ell$ modulo $d$. 

If $K$ and $L$ are integers which divide $d$, and  $\mathbf{M}=(M_{k,\ell})_{k,\ell\in\mathcal{D}}$ is a $d\times d$ matrix, we will let $\mathbf{M_{K,L}}$ be the $K\times L$ matrix obtained by subsampling $\mathbf{M}$ at equally spaced entries. That is, for $k\in\mathcal{K}$ and $\ell\in\mathcal{L}$, we let
\begin{equation}\label{eqn: subsample}
    (\mathbf{M_{K,L}})_{k,\ell}=M_{k\frac{d}{K}, \ell\frac{d}{L}}.
\end{equation}
We let $\mathbf{F_d}$  be the $d\times d$  Fourier matrix with entries given by 
\begin{equation*}
(\mathbf{\mathbf{F_d}})_{j,k} =
\frac{1}{d}\mathbbm{e}^{\frac{-2\pi\mathbbm{i} j k}{d}}
\end{equation*}
for $j,k\in\mathcal{D}$, and similarly let $\mathbf{F_L}$ and $\mathbf{F_K}$ be the $L\times L$ and $K\times K$ Fourier matrices with indices in $\mathcal{L}$ and $\mathcal{K}$, respectively.  Finally, we will often use generic constants whose values change from line to line, but whose dependencies on other quantities are explicitly tracked and noted.  These constants will be denoted by capital $C$ and have subscripts that indicate the mathematical objects on which they depend.

\section{Discretization}\label{sec: discretize}

Let $\tilde{m},f:\mathbb{R}\rightarrow\mathbb{C}$ be  $C^k$-functions for some $k\geq 2$ such that 
$
\text{supp}(f)\subseteq [-\pi,\pi],
$  and assume that either Assumption \ref{as: trigpoly} or Assumption \ref{as: compact} holds. We will define $m$ to be a periodic function which coincides with $\tilde{m}$ on $[-\pi,\pi]$.  Specifically, we let
\begin{equation*}
    m(x)\coloneqq \begin{cases} \tilde{m}(x)&\text{ if Assumption \ref{as: trigpoly} holds,}
\\
\sum_{n\in\mathbb{Z}}\tilde{m}(x+2\pi n)&\text{ if Assumption \ref{as: compact} holds}.
        \end{cases}
\end{equation*}

As in Section \ref{sec: intro}, 
let $\mathcal{D}$ be the set of $d$ consecutive integers centered at the origin, and
define $\mathbf{Z}=(Z_{\omega,\ell})_{\omega,\ell\in\mathcal{D}}$ to be the $d\times d$ matrix with entries given by 
\begin{equation*}
Z_{\omega,\ell} \coloneqq \left|\int_{\mathbb{R}} f(x)\tilde{m}\left(x-\frac{2\pi}{d}\ell\right)\mathbbm{e}^{-\mathbbm{i}x\omega}dx\right|^2.
\end{equation*}
Our goal is to recover $f$ from the matrix $\mathbf{Y}=(Y_{\omega,\ell})_{\omega,\ell\in\mathcal{D}}$ of noisy measurements given by 
\begin{equation*}
Y_{\omega,\ell} \coloneqq Z_{\omega,\ell}+\eta_{\omega,\ell},
\end{equation*}
where  ${\bm{ \eta}}=(\eta_{\omega,\ell})_{\omega,\ell\in\mathcal{D}}$ is an arbitrary additive noise matrix.
 Since the support of $f$ is contained in $[-\pi,\pi]$, we note that
\begin{equation}
    Z_{\omega,\ell} 
= \left|\int_{-\pi}^{\pi}f(x)\tilde{m}\left(x-\frac{2\pi}{d}\ell\right)\mathbbm{e}^{-\mathbbm{i}x\omega}dx\right|^2.\label{eq: restrict to pi}
\end{equation}Furthermore, 
under either Assumption \ref{as: trigpoly} or Assumption \ref{as: compact}, we note that we may replace $\tilde{m}$ with $m$ in \eqref{eq: restrict to pi}, i.e., 
\begin{align}Z_{\omega,\ell} &= \left|\int_{-\pi}^{\pi}f(x){m}\left(x-\frac{2\pi}{d}\ell\right)\mathbbm{e}^{-\mathbbm{i}x\omega}dx\right|^2. \label{eq: same with periodic}
\end{align}
Under Assumption \ref{as: trigpoly}, this is immediate since 
$\tilde{m}(x)=m(x)$ 
by definition.  
Under Assumption 
\ref{as: compact}, we note that 
\begin{equation*}
    \text{supp}(\tilde{m}-{m}) \subseteq (-\infty,b-2\pi]\cup[2\pi-b,\infty)
\end{equation*}
and that  $\left|\frac{2\pi\ell}{d}\right|<\pi$ for all $\ell\in\mathcal{D}$.  
Therefore, we have that 
\begin{equation*}
    \tilde{m}\left(x-\frac{2\pi}{d}\ell\right)-m\left(x-\frac{2\pi}{d}\ell\right)=0\quad\text{for all } |x|<\pi-b.
\end{equation*}
As a result, the assumptions that  the  support of $f$ is contained in $(-a,a)$ and that $a<\pi-b$ imply that
\begin{equation*}
    \int_{-\pi}^{\pi}f(x)\left(\tilde{m}\left(x-\frac{2\pi}{d}\ell\right)-m\left(x-\frac{2\pi}{d}\ell\right)\right)\mathbbm{e}^{-\mathbbm{i}x\omega}dx=0
\end{equation*}
and so \eqref{eq: same with periodic} follows.

For any $C^2$-smooth function $g:  \mathbb{R} \rightarrow \mathbb{C}$, we will define  
\begin{equation*}
\widehat{g}(n) \coloneqq \frac{1}{2\pi}\int_{-\pi}^{\pi} g(x) \mathbbm{e}^{-\mathbbm{i}nx}dx
\end{equation*}
for all $n\in\mathbb{Z}$, 
and note that, if $g$ is $2\pi$-periodic, we may use Fourier series to write
\begin{equation}\label{eq: fourier series}
    g(x) = \sum_{n\in \mathbb{Z}} \widehat{g}(n)\mathbbm{e}^{\mathbbm{i}nx}.
\end{equation}
We also note that, if $g$ is not $2\pi$-periodic, but its support is contained in $(-\pi,\pi)$, then \eqref{eq: fourier series} still holds for all $x\in(-\pi,\pi)$ since we may view $\{\widehat{g}(n)\}_{n\in\mathbb{Z}}$ as the Fourier coefficients of the periodized version of $g$. 
For any set $\mathcal{A}\subseteq\mathbb{Z}$, we define $P_\mathcal{A}$ to be the Fourier projection operator given by  
\begin{equation}\label{eq:Fourierprojection}
P_\mathcal{A}g(x) \coloneqq  \sum_{n\in \mathcal{A}} \widehat{g}(n)\mathbbm{e}^{\mathbbm{i}nx}.
\end{equation}

Now, let $r$, $s$, and $d$ be odd numbers with $r+s<d.$  
Let $\mathcal{R}\coloneqq [r]_c$,  $\mathcal{S}\coloneqq [s]_c$, and $\mathcal{D}=[d]_c$ be the sets of $r$,  $s$, and $d$ consecutive integers centered at the origin.  Let $\mathbf{T}\coloneqq (T_{\omega,\ell})_{\omega,\ell\in\mathcal{D}}$ denote the matrix  of  measurements obtained by replacing $f$ with $P_\mathcal{S}f$ and ${m}$ with $P_\mathcal{R}{m}$ in \eqref{eq: same with periodic}, i.e., the matrix whose entries are given by 
\begin{align}\label{eq:truncated-integral}
    T_{\omega,\ell} &\coloneqq \left|\int_{-\pi}^{\pi}P_\mathcal{S}f(x)P_\mathcal{R}{m}\left(x-\frac{2\pi}{d}\ell\right)\mathbbm{e}^{-\mathbbm{i}x\omega}dx\right|^2.
\end{align}
If Assumption \ref{as: trigpoly} holds, we will assume that $r>\rho+1$ which implies $P_\mathcal{R}m(x)=m(x)$.

The following lemma  provides a bound on the $\ell^\infty$-norm of the error matrix 
$\mathbf{Z}-\mathbf{T}.$
 
\begin{lemma}\label{lem: bound on E}
Let $r$, $s$, and $d$ be odd numbers with $r+s<d,$  and let  $\tilde{m}:\mathbb{R}\rightarrow\mathbb{C}$ and $f:\mathbb{R}\rightarrow\mathbb{C}$ be $C^k$-smooth functions for some $k\geq 2$. 
Then, under Assumption \ref{as: trigpoly},  we have
\begin{equation*}\label{eq:E-boundnew}
\|\mathbf{Z}-\mathbf{T}\|_\infty \leq C_{f,m}\left(\frac{1}{s}\right)^{k-1},
\end{equation*}
 and, under Assumption \ref{as: compact}, we have
\begin{equation*}\label{eq:E-boundnewcompact}
\|\mathbf{Z}-\mathbf{T}\|_\infty \leq C_{f,m}\bigg(\left(\frac{1}{s}\right)^{k-1}+\left(\frac{1}{r}\right)^{k-1}\bigg).
\end{equation*} 
In either case, $C_{f,m} \in \mathbbm{R}^+$ is a generic constant that depends only on $f$, $\tilde{m}$, and $k$ (and, in particular, is independent of $s$, $r$ and $d$).
\end{lemma}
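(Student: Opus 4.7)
The plan is to expand $Z_{\omega,\ell}-T_{\omega,\ell}$ as a difference of squared magnitudes and then reduce to a pair of truncation errors in Fourier space. Write $A_{\omega,\ell}$ for the integral defining $Z_{\omega,\ell}$ (with $m$ replacing $\tilde m$ via \eqref{eq: same with periodic}) and $B_{\omega,\ell}$ for the integral defining $T_{\omega,\ell}$. Then
$$|Z_{\omega,\ell}-T_{\omega,\ell}|=\bigl||A_{\omega,\ell}|^2-|B_{\omega,\ell}|^2\bigr|\leq(|A_{\omega,\ell}|+|B_{\omega,\ell}|)\,|A_{\omega,\ell}-B_{\omega,\ell}|.$$
Bounding $|A_{\omega,\ell}|,|B_{\omega,\ell}|$ uniformly in $\omega,\ell$ is routine: since $P_{\mathcal S}$ and $P_{\mathcal R}$ are $L^2$ projections and $f$, $m$ are $C^k$ (hence bounded on $[-\pi,\pi]$), both integrals are dominated by $2\pi\,\|f\|_\infty\|m\|_\infty$ via the triangle inequality, giving $|A_{\omega,\ell}|+|B_{\omega,\ell}|\leq C_{f,m}$.

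The substantive step is to bound $|A_{\omega,\ell}-B_{\omega,\ell}|$ by adding and subtracting a mixed term:
$$A_{\omega,\ell}-B_{\omega,\ell}=\int_{-\pi}^{\pi}(f-P_{\mathcal S}f)(x)\,m\bigl(x-\tfrac{2\pi}{d}\ell\bigr)e^{-i\omega x}dx+\int_{-\pi}^{\pi}P_{\mathcal S}f(x)\,(m-P_{\mathcal R}m)\bigl(x-\tfrac{2\pi}{d}\ell\bigr)e^{-i\omega x}dx.$$
Applying the Hölder pairing $L^\infty \cdot L^1$ to each integral, the right-hand side is bounded by $2\pi\|m\|_\infty\|f-P_{\mathcal S}f\|_\infty+2\pi\|f\|_\infty\|m-P_{\mathcal R}m\|_\infty$, uniformly in $\omega,\ell$.

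To finish, I would invoke the standard Fourier decay estimate: for a $C^k$ function which is $2\pi$-periodic (here both the periodic extension of $f$—since $\text{supp}(f)$ sits in the closed interval where $f$ and its derivatives vanish at the boundary—and the function $m$ are $C^k$-smooth $2\pi$-periodic), repeated integration by parts gives $|\widehat g(n)|\leq C_g/|n|^k$ for $n\neq 0$. Consequently,
$$\|g-P_{\mathcal A}g\|_\infty\leq\sum_{|n|>|\mathcal A|/2}|\widehat g(n)|\leq C_g\Bigl(\tfrac{1}{|\mathcal A|}\Bigr)^{k-1},$$
so $\|f-P_{\mathcal S}f\|_\infty\lesssim_f(1/s)^{k-1}$ and $\|m-P_{\mathcal R}m\|_\infty\lesssim_m(1/r)^{k-1}$. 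Under Assumption~\ref{as: compact} both terms contribute, producing the stated $(1/s)^{k-1}+(1/r)^{k-1}$ bound; under Assumption~\ref{as: trigpoly}, the condition $r>\rho+1$ forces $P_{\mathcal R}m=m$, so only the $(1/s)^{k-1}$ term remains. Combining with the $|A|+|B|\leq C_{f,m}$ estimate yields the two claimed $\ell^\infty$ bounds.

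The only mild subtlety—and the place to be careful—is justifying the $|n|^{-k}$ decay of $\widehat{f}(n)$ from the hypothesis that $f$ is $C^k$ with $\text{supp}(f)\subseteq[-\pi,\pi]$: one must use that compact support inside the closed interval, together with $C^k$-smoothness, forces $f$ and all its derivatives up to order $k$ to vanish at $\pm\pi$, so the periodized function is genuinely $C^k$ on the circle and the integration-by-parts argument applies. Everything else is a bookkeeping exercise absorbed into the generic constant $C_{f,m}$.
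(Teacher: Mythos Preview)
Your proof is correct and follows essentially the same route as the paper's: write the difference of squared magnitudes via $|A|^2-|B|^2$, use the same add-and-subtract decomposition of $A-B$, and control the Fourier tails by the $|\widehat g(n)|\le C_g/|n|^k$ estimate (which the paper packages as Lemma~\ref{lem:Intbound}). One minor slip worth fixing: you justify $|B_{\omega,\ell}|\le 2\pi\|f\|_\infty\|m\|_\infty$ by saying $P_{\mathcal S},P_{\mathcal R}$ are $L^2$ projections, but Fourier partial sums are not $L^\infty$ contractions; the correct uniform bound is $\|P_{\mathcal S}f\|_\infty\le\sum_{n}|\widehat f(n)|\le C_f$ (and similarly for $m$), which follows immediately from the same $C^k$ decay estimate you invoke a few lines later, and is exactly how the paper handles it.
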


To prove Lemma \ref{lem: bound on E}, we need the following auxiliary lemma.  Note in particular, it can be applied both to $2\pi$-periodic functions and to functions whose support is contained in $(-\pi,\pi).$ 
\begin{lemma} \label{lem:Intbound}
Let $k\geq 2,$ and let $g:\mathbb{R}\rightarrow\mathbb{C}$ be a 
$C^k$-smooth function such that \eqref{eq: fourier series} holds for all $x\in(-\pi,\pi)$.  Let $n \geq 3$ be an odd number,  let  $\mathcal{N} \coloneqq [n]_c,$ and let $\mathcal{A}$ be any subset of $\mathbb{Z}.$  Then, there exists a constant $C_g$ depending only on $g$ and $k$ such that 
\[ \|P_{\mathcal{A}}g\|_{L^\infty([-\pi,\pi])}\leq C_{g} \quad\textrm{and}\quad 
\|g - P_{\mathcal{N}}g\|_{L^\infty([-\pi,\pi])} \leq C_{g} \left( \frac{1}{n} \right)^{k-1},
\]
where $P_{\mathcal{A}}$ and $P_{\mathcal{N}}$ are the Fourier projection operators defined as in \eqref{eq:Fourierprojection}.
\end{lemma}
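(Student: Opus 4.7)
The plan is to deduce both estimates from the classical Fourier-coefficient decay bound $|\widehat{g}(m)| \leq C_g |m|^{-k}$ for $m \neq 0$, obtained by $k$-fold integration by parts of
\begin{equation*}
\widehat{g}(m) = \frac{1}{2\pi}\int_{-\pi}^{\pi} g(x)\,\mathbbm{e}^{-\mathbbm{i}mx}\,dx.
\end{equation*}
The boundary terms $\bigl[g^{(j)}(x)\mathbbm{e}^{-\mathbbm{i}mx}/(-\mathbbm{i}m)^{j+1}\bigr]_{-\pi}^{\pi}$ that appear at each step all vanish in the two regimes in which the lemma is actually applied in the paper: if $g$ is $2\pi$-periodic, the values of $g^{(j)}$ at $\pm\pi$ coincide; if $\text{supp}(g)\subseteq(-\pi,\pi)$, the derivatives $g^{(j)}$ vanish at $\pm\pi$. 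Either way, after $k$ applications we obtain $|\widehat{g}(m)| \leq \|g^{(k)}\|_{L^\infty([-\pi,\pi])}/|m|^k$, so $C_g$ may be taken proportional to $\|g^{(k)}\|_{L^\infty}$.

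Given this decay, the first inequality follows by the triangle inequality on the defining Fourier sum of $P_{\mathcal{A}}g$:
\begin{equation*}
\|P_{\mathcal{A}}g\|_{L^\infty([-\pi,\pi])} \leq \sum_{m \in \mathcal{A}} |\widehat{g}(m)| \leq |\widehat{g}(0)| + 2C_g \sum_{m\geq 1} m^{-k},
\end{equation*}
which is finite and independent of $\mathcal{A}$ since $k \geq 2$ makes the $p$-series converge. Absorbing this finite quantity into a new constant (still denoted $C_g$) yields the claimed uniform bound.

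For the truncation error, the same triangle inequality applied to the tail of the Fourier series of $g$ gives
\begin{equation*}
\|g - P_{\mathcal{N}}g\|_{L^\infty([-\pi,\pi])} \leq \sum_{|m|\geq (n+1)/2} |\widehat{g}(m)| \leq 2C_g \sum_{m\geq (n+1)/2} m^{-k}.
\end{equation*}
Comparison of this tail sum with $\int_{(n-1)/2}^{\infty} x^{-k}\,dx = \frac{1}{k-1}\bigl((n-1)/2\bigr)^{-(k-1)}$, together with the elementary observation that $(n-1)/2 \geq n/3$ for odd $n \geq 3$, delivers the desired $C_g n^{-(k-1)}$ estimate after absorbing constants.

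The only subtle point is the justification that the integration-by-parts boundary terms vanish, since the lemma's hypothesis (that the Fourier series represents $g$ on $(-\pi,\pi)$) is weaker than either of the two concrete settings; as the remark immediately preceding the lemma makes clear, however, the lemma is invoked only for those two cases, and in both the boundary terms vanish for the reason above. Once this is handled, everything else is a routine $\ell^1$ tail estimate and there is no hard step remaining.
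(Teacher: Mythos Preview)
Your proof is correct and follows essentially the same approach as the paper: both obtain the decay $|\widehat{g}(m)|\leq C_g|m|^{-k}$ from $C^k$-smoothness and then bound $\|P_{\mathcal A}g\|_{L^\infty}$ and $\|g-P_{\mathcal N}g\|_{L^\infty}$ by the corresponding $\ell^1$ sums of Fourier coefficients. You are in fact more careful than the paper in justifying why the integration-by-parts boundary terms vanish under the lemma's actual hypothesis, and in making the tail-sum/integral comparison explicit.
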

For a proof of Lemma \ref{lem:Intbound}, please see Appendix \ref{app: proof of discretization lemmas}.\\

\begin{proof}[The Proof of Lemma \ref{lem: bound on E}]

We note that the measurements given in \eqref{eq: same with periodic} and \eqref{eq:truncated-integral} may be written as \begin{equation*}
    Z_{\omega,\ell}=|M_{\omega,\ell}|^2 \quad\text{and}\quad T_{\omega,\ell}=|U_{\omega,\ell}|^2 ,
\end{equation*}
where
\begin{equation*}
M_{\omega,\ell}\coloneqq \int_{-\pi}^{\pi} f(x){m}\left(x-\frac{2\pi}{d}\ell\right)\mathbbm{e}^{-\mathbbm{i}x\omega}dx 
\quad\text{and}\quad 
    U_{\omega,\ell}\coloneqq \int_{-\pi}^{\pi}P_\mathcal{S}f(x)P_\mathcal{R}{m}\left(x-\frac{2\pi}{d}\ell\right)\mathbbm{e}^{-\mathbbm{i}x\omega}dx.
\end{equation*}
Lemma \ref{lem:Intbound} implies
\begin{equation*}
    \|P_\mathcal{R}{m}\|_{L^\infty([-\pi,\pi])}\leq C_{{m}}\quad \text{and}\quad
\|P_\mathcal{S}f\|_{L^\infty([-\pi,\pi])} \leq  C_f.
\end{equation*} 
Therefore,  
\begin{equation*}\label{eqn: Tboundnew}
|U_{\omega,\ell}| \leq 2\pi \|P_\mathcal{R}{m}\|_{L^\infty([-\pi,\pi])}
\|P_\mathcal{S}f\|_{L^\infty([-\pi,\pi])}\leq C_{f,{m}}.
\end{equation*}
Next, letting $\tilde{\ell}=2\pi\ell/d$, we note that \begin{align*}
M_{\omega,\ell}-U_{\omega,\ell} &=
\int_{-\pi}^\pi \big(f(x)-P_\mathcal{S}f(x)\big){m}(x-\tilde\ell)\mathbbm{e}^{-\mathbbm{i} \omega x}dx
+\int_{-\pi}^\pi P_\mathcal{S}f(x)\left({m}(x-\tilde{\ell})-P_\mathcal{R}{m}(x-\tilde{\ell})\right)\mathbbm{e}^{-\mathbbm{i} \omega x}dx.
\end{align*} Therefore, by  Lemma~\ref{lem:Intbound}
and the triangle inequality,  we get
\begin{align*}
|M_{\omega,\ell}-U_{\omega,\ell}| &\leq C_{f,m}\bigg(\left(\frac{1}{s}\right)^{k-1}+\|m-P_\mathcal{R}m\|_{L^\infty([-\pi,\pi])}\bigg).
\end{align*}
Thus, we may use the difference of squares formula to see 
\begin{align*}\nonumber
|Z_{\omega,\ell}-T_{\omega,\ell}| &
= (|M_{\omega,\ell}|+|U_{\omega,\ell}|)||M_{\omega,\ell}|-|U_{\omega,\ell}||
\leq (2|U_{\omega,\ell}|+|M_{\omega,\ell}-U_{\omega,\ell}|)|M_{\omega,\ell}-U_{\omega,\ell}|\\
&\leq C_{f,m}\bigg(1+\left(\frac{1}{s}\right)^{k-1}+\|m-P_\mathcal{R}m\|_{L^\infty([-\pi,\pi])}\bigg)\bigg(\left(\frac{1}{s}\right)^{k-1}+\|m-P_\mathcal{R}m\|_{L^\infty([-\pi,\pi])}\bigg).
\end{align*}
Under Assumption \ref{as: trigpoly}, we have $\|m-P_\mathcal{R}m\|_{L^\infty([-\pi,\pi])}=0,$ and thus,
\begin{equation*}
|Z_{\omega,\ell}-T_{\omega,\ell}| \leq C_{f,m}\bigg(1+\left(\frac{1}{s}\right)^{k-1}\bigg)\left(\frac{1}{s}\right)^{k-1}\leq C_{f,m}\left(\frac{1}{s}\right)^{k-1} .
\end{equation*}
Likewise, under Assumption \ref{as: compact}, Lemma \ref{lem:Intbound} implies  $\|m-P_\mathcal{R}m\|_{L^\infty([-\pi,\pi])}\leq C_m \left(\frac{1}{r}\right)^{k-1},$ and so 

\begin{align*}
|Z_{\omega,\ell}-T_{\omega,\ell}| &\leq C_{f,m}\bigg(1+\left(\frac{1}{s}\right)^{k-1}+\left(\frac{1}{r}\right)^{k-1}\bigg)\bigg(\left(\frac{1}{s}\right)^{k-1}+\left(\frac{1}{r}\right)^{k-1}\bigg)\\
&\leq C_{f,m}\bigg(\left(\frac{1}{s}\right)^{k-1}+\left(\frac{1}{r}\right)^{k-1}\bigg). 
\end{align*}

\end{proof} 

Algorithms \ref{Big Algorithm} and \ref{Big Algorithm Compact} rely on discretizing the integrals used in the definitions of our measurements. Towards this end, we define three vectors $\mathbf{x}\coloneqq (x_p)_{p\in\mathcal{D}},$ $\mathbf{y}\coloneqq (y_p)_{p\in\mathcal{D}},$ and $\mathbf{z}\coloneqq (z_p)_{p\in\mathcal{D}}$ by

\begin{align}\label{eq:def-x}
x_{p} &\coloneqq P_\mathcal{S}f\left(\frac{2\pi p}{d}\right),\quad 
y_{p} \coloneqq P_\mathcal{R}m\left(\frac{2\pi p}{d}\right),\quad\text{and}\quad z_p=m\left(\frac{2\pi p}{d}\right).
\end{align}
We note that under Assumption \ref{as: trigpoly}, we have $P_\mathcal{R}m(x)=m(x)$ and therefore $\mathbf{y}=\mathbf{z}.$ Under Assumption \ref{as: compact}, we have that  $\text{supp}(m)\cap [-\pi,\pi]\subseteq(-b,b).$ Therefore,   $\text{supp}(\mathbf{z})\subseteq[\delta+1]_c$, where $\delta \coloneqq \lfloor\frac{b}{\pi}d\rfloor.$
The following lemma shows that the integral used in the definition of $\mathbf{T}$ can be rewritten as a discrete sum. Please see Appendix \ref{app: proof of discretization lemmas} for a proof.

\begin{lemma}\label{lem: discretesum}
Let  $\mathbf{x}=(x_p)_{p\in\mathcal{D}}$ and  $\mathbf{y}=(y_p)_{p\in\mathcal{D}}$  be  defined as in
\eqref{eq:def-x}.
 Then, for all $\omega\in\mathcal{D}$, $\ell\in\mathbb{Z}$, and $\tilde{\ell}=\frac{2\pi\ell}{d}$, we have that
\[
 \int_{-\pi}^{\pi}P_\mathcal{S}f(x)P_\mathcal{R}m(x-\tilde\ell)\mathbbm{e}^{-\mathbbm{i}x\omega}dx=\frac{2\pi}{d}\sum_{p\in\mathcal{D}} x_p y_{p-\ell} \mathbbm{e}^{-2\pi \mathbbm{i} \omega p/d},
\]
and as a consequence, 
\begin{equation}\label{eq: discreteT}
T_{\omega,\ell} = 
\frac{4\pi^2}{d^2} \bigg|\sum_{p\in\mathcal{D}} x_p y_{p-\ell} \mathbbm{e}^{-2\pi \mathbbm{i} \omega p/d} \bigg|^2.
\end{equation}
\end{lemma}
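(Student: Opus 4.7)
The plan is to reduce the claim to the standard fact that the $d$-point rectangle rule integrates trigonometric polynomials of degree strictly less than $d$ exactly. I would start by noting that the integrand in \eqref{eq:truncated-integral} is the product of three trigonometric factors: $P_\mathcal{S}f(x)$, which has frequencies in $\mathcal{S}$; the translated factor $P_\mathcal{R}m(x-\tilde\ell)$, whose frequency set is still $\mathcal{R}$ (translation changes only phases); and the modulation $\mathbbm{e}^{-\mathbbm{i}\omega x}$. Thus the full integrand is a trigonometric polynomial $g(x)$ whose frequencies $k$ satisfy $|k|\leq \lceil s/2\rceil+\lceil r/2\rceil+(d-1)/2$. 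Using the hypothesis $r+s<d$ together with $|\omega|\leq (d-1)/2$, every such $k$ satisfies $|k|<d$.

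Next I would invoke the orthogonality identity
\begin{equation*}
\frac{1}{d}\sum_{p\in\mathcal{D}}\mathbbm{e}^{2\pi\mathbbm{i}kp/d}=\mathbbm{1}[d\mid k].
\end{equation*}
Expanding $g$ in its Fourier series and interchanging the finite sums, the only surviving contribution on the right-hand side comes from frequencies that are multiples of $d$; by the previous paragraph, only $k=0$ qualifies. Since the $k=0$ Fourier coefficient of $g$ integrates over $[-\pi,\pi]$ to $2\pi$ and contributes $d$ to the sum in $p$, this yields the exact quadrature identity
\begin{equation*}
\int_{-\pi}^{\pi}g(x)\,dx=\frac{2\pi}{d}\sum_{p\in\mathcal{D}}g\!\left(\frac{2\pi p}{d}\right).
\end{equation*}

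It then remains to evaluate $g$ at the grid. By definition $P_\mathcal{S}f(2\pi p/d)=x_p$, and $2\pi$-periodicity of $P_\mathcal{R}m$ gives
\begin{equation*}
P_\mathcal{R}m\!\left(\tfrac{2\pi p}{d}-\tfrac{2\pi\ell}{d}\right)=P_\mathcal{R}m\!\left(\tfrac{2\pi(p-\ell)}{d}\right)=y_{p-\ell},
\end{equation*}
where $p-\ell$ is taken modulo $d$ so as to land in $\mathcal{D}$, matching the circular-shift convention of Section~\ref{sec: notationa}. Combining these with the exponential $\mathbbm{e}^{-2\pi\mathbbm{i}\omega p/d}$ yields the first displayed equation, and \eqref{eq: discreteT} follows by taking the absolute square of both sides and factoring out $(2\pi/d)^2$. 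The main thing to watch is the frequency bookkeeping when $r$ or $s$ is even, since the $[n]_c$ convention then expands the set by one; however, one still has strict inequality $|k|<d$ for every frequency of $g$, so no aliasing occurs. I expect this to be a careful verification rather than a true obstacle.
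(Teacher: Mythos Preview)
Your proposal is correct and is essentially the same argument as the paper's. The paper expands $P_\mathcal{S}f$ and $P_\mathcal{R}m$ in their Fourier series and replaces each $\int_{-\pi}^{\pi}\mathbbm{e}^{\mathbbm{i}(n+m-\omega)x}\,dx$ by $\frac{2\pi}{d}\sum_{p\in\mathcal{D}}\mathbbm{e}^{2\pi\mathbbm{i}p(n+m-\omega)/d}$, valid because $|n+m-\omega|<d$ from $r+s<d$ and $\omega\in\mathcal{D}$; you package this same orthogonality step as the statement that the $d$-point rectangle rule integrates trigonometric polynomials of degree less than $d$ exactly, then evaluate at the grid. The frequency bookkeeping, the use of periodicity to identify $P_\mathcal{R}m\big(\tfrac{2\pi(p-\ell)}{d}\big)=y_{p-\ell}$, and the final squaring are identical in both.
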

The matrix $\mathbf{T}$ depends on the vector $\mathbf{y}$ which is obtained by sampling the trigonometric polynomial $P_\mathcal{R}m$. By construction, $\mathbf{y}$ is not compactly supported, even under Assumption \ref{as: compact}. In Section \ref{sec: WDD}, we will apply a Wigner Deconvolution method based on \cite{Perlmutter2020} to invert our discretized measurements. In order to do this, we will need to use the vector $\mathbf{z}$ which is obtained by subsampling $m$ rather than $P_\mathcal{R}{m}.$ (By construction, $\mathbf{z}$ will be compactly supported under Assumption \ref{as: compact}, and under Assumption \ref{as: trigpoly}, we have $\mathbf{y}=\mathbf{z}$ and so this makes no difference.)  This motivates the following lemma which shows that $\mathbf{T}$ is well-approximated by the matrix
  $\mathbf{T'}=(T'_{\omega,\ell})_{\omega,\ell\in\mathcal{D}}$  obtained by replacing $\mathbf{y}$ with $\mathbf{z}$ in \eqref{eq: discreteT}, i.e.,
 \begin{equation}\label{eq: Tprime}
     T'_{\omega,\ell}=\frac{4\pi^2}{d^2}\bigg| \sum_{p\in\mathcal{D}} x_p z_{p-\ell} \mathbbm{e}^{-2\pi \mathbbm{i} \omega p/d}\bigg|^2.
 \end{equation}
 \begin{lemma}\label{lem TvsTprime} Let $\mathbf{T}$ and $\mathbf{T}'$ be the matrices defined in \eqref{eq:truncated-integral} and \eqref{eq: Tprime}. Then, under  Assumption \ref{as: trigpoly}, we have 
 \begin{equation*}
 \|\mathbf{T}-\mathbf{T'}\|_\infty=0,
 \end{equation*} and under Assumption   \ref{as: compact},
 \begin{equation*}
 \|\mathbf{T}-\mathbf{T'}\|_\infty\leq C_{f,m}\left(\frac{1}{r}\right)^{k-1}.
 \end{equation*}
 \end{lemma}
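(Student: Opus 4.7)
The plan is to split into the two assumptions. Under Assumption \ref{as: trigpoly}, the equality $\mathbf{T}=\mathbf{T}'$ is essentially a one-line observation: since $r>\rho+1$, the Fourier projection $P_\mathcal{R}m$ coincides with $m$ (all Fourier coefficients of $m$ live in $\mathcal{R}$), so $\mathbf{y}=\mathbf{z}$ and the formulas \eqref{eq: discreteT} and \eqref{eq: Tprime} agree entrywise.

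The real work is Assumption \ref{as: compact}. The natural approach is to apply the difference-of-squares identity, exactly as was done in the proof of Lemma \ref{lem: bound on E}. Writing
\[
A_{\omega,\ell} \coloneqq \sum_{p\in\mathcal{D}} x_p y_{p-\ell}\mathbbm{e}^{-2\pi\mathbbm{i}\omega p/d}, \qquad B_{\omega,\ell} \coloneqq \sum_{p\in\mathcal{D}} x_p z_{p-\ell}\mathbbm{e}^{-2\pi\mathbbm{i}\omega p/d},
\]
we have $T_{\omega,\ell}=(4\pi^2/d^2)|A_{\omega,\ell}|^2$ and $T'_{\omega,\ell}=(4\pi^2/d^2)|B_{\omega,\ell}|^2$, so
\[
|T_{\omega,\ell}-T'_{\omega,\ell}| \;\leq\; \frac{4\pi^2}{d^2}\bigl(|A_{\omega,\ell}|+|B_{\omega,\ell}|\bigr)\,|A_{\omega,\ell}-B_{\omega,\ell}|.
\]

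To bound the individual terms I would invoke Lemma \ref{lem:Intbound} three times. It yields the uniform bounds $\|P_\mathcal{S}f\|_{L^\infty([-\pi,\pi])}\leq C_f$ and $\|P_\mathcal{R}m\|_{L^\infty([-\pi,\pi])}\leq C_m$, which give $|x_p|\leq C_f$ and $|y_p|\leq C_m$; since $m$ is also uniformly bounded (it is $C^k$ and $2\pi$-periodic), $|z_p|\leq C_m$ as well. Hence the trivial estimate on the $d$-term sums gives $|A_{\omega,\ell}|,|B_{\omega,\ell}|\leq d\, C_{f,m}$. The third application of Lemma \ref{lem:Intbound} is the key quantitative input: $\|m-P_\mathcal{R}m\|_{L^\infty([-\pi,\pi])}\leq C_m(1/r)^{k-1}$, so sampling at $2\pi p/d$ yields $|y_p-z_p|\leq C_m(1/r)^{k-1}$ uniformly in $p$. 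Then
\[
|A_{\omega,\ell}-B_{\omega,\ell}| \;\leq\; \sum_{p\in\mathcal{D}} |x_p|\,|y_{p-\ell}-z_{p-\ell}| \;\leq\; d\, C_{f,m}\left(\frac{1}{r}\right)^{k-1}.
\]
Combining these with the difference-of-squares inequality yields $|T_{\omega,\ell}-T'_{\omega,\ell}|\leq C_{f,m}(1/r)^{k-1}$, and taking the maximum over $\omega,\ell\in\mathcal{D}$ gives the stated bound in $\|\cdot\|_\infty$.

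There is no real obstacle here: the structure mirrors the proof of Lemma \ref{lem: bound on E} almost exactly, with the roles of the continuous integrals replaced by their discrete Riemann-sum counterparts. The only subtlety worth checking is that under Assumption \ref{as: compact} the periodized mask $m$ really is $C^k$-smooth on $\mathbb{R}$ so that Lemma \ref{lem:Intbound} applies to it; this is immediate from $\operatorname{supp}(\tilde m)\subseteq(-b,b)$ with $b<\pi$, since then $\tilde m$ and all its derivatives vanish near $\pm\pi$ and the periodic extension is seamless. The factor $1/d^2$ out front is crucial: the naive term-by-term bound loses two factors of $d$, but those are precisely compensated by the $1/d^2$ prefactor in \eqref{eq: discreteT} and \eqref{eq: Tprime}, leaving the clean decay $(1/r)^{k-1}$.
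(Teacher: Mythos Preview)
Your proposal is correct and follows essentially the same approach as the paper: both arguments use the difference-of-squares identity, bound the discrete sums via the $\ell^\infty$-estimates $\|\mathbf{x}\|_\infty\leq C_f$, $\|\mathbf{y}\|_\infty,\|\mathbf{z}\|_\infty\leq C_m$ from Lemma~\ref{lem:Intbound}, and then control $\|\mathbf{y}-\mathbf{z}\|_\infty$ by $\|m-P_\mathcal{R}m\|_{L^\infty([-\pi,\pi])}\leq C_m(1/r)^{k-1}$. The only cosmetic difference is that the paper absorbs the $2\pi/d$ factor into its intermediate quantities $U_{\omega,\ell},U'_{\omega,\ell}$, whereas you leave it outside and track the $d$-cancellation explicitly at the end.
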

 \begin{proof}
Under Assumption \ref{as: trigpoly}, we have  $\mathbf{y}=\mathbf{z}.$ Thus by \eqref{eq: discreteT} and \eqref{eq: Tprime} we have   $\mathbf{T}=\mathbf{T}'$ and therefore the first claim is immediate.  
To prove the second claim, we will assume Assumption \ref{as: compact} holds and use arguments similar to those used in the proof of Lemma \ref{lem: bound on E}. Let 
 \begin{equation*}
     U_{\omega,\ell}= \frac{2\pi}{d} \sum_{p\in\mathcal{D}} x_p y_{p-\ell} \mathbbm{e}^{-2\pi \mathbbm{i} \omega p/d}\quad\text{and}\quad
     U'_{\omega,\ell}= \frac{2\pi}{d}\sum_{p\in\mathcal{D}} x_p z_{p-\ell} \mathbbm{e}^{-2\pi \mathbbm{i} \omega p/d}.
 \end{equation*}
 Then by Lemma \ref{lem: discretesum} we have 
 \begin{equation*}
     T_{\omega,\ell}=|U_{\omega,\ell}|^2 \quad\text{and}\quad
T'_{\omega,\ell}=|U'_{\omega,\ell}|^2.
 \end{equation*}
 By Lemma \ref{lem:Intbound} and the fact that $m$ is a continuous periodic function,  we see 
 \begin{align*}
     \|\mathbf{x}\|_\infty &\leq \|P_\mathcal{B}f\|_{L^\infty([-\pi,\pi])}\leq C_f,\\
      \|\mathbf{y}\|_\infty &\leq \|P_\mathcal{R}m\|_{L^\infty([-\pi,\pi])}\leq C_m,\quad\text{and}\\
     \|\mathbf{z}\|_\infty &\leq \|m\|_{L^\infty([-\pi,\pi])}\leq C_m. 
 \end{align*}
 Therefore,  
 \begin{equation*}
     |U_{\omega,\ell}|+|U'_{\omega,\ell}| \leq C_{f,m}.
 \end{equation*}
To bound $|U_{\omega,\ell}-U'_{\omega,\ell}|$, we may again apply  Lemma \ref{lem:Intbound}, to see 
 \begin{equation*}
     |U_{\omega,\ell}-U'_{\omega,\ell}|\leq 2\pi \|\mathbf{x}\|_\infty \|\mathbf{y}-\mathbf{z}\|_\infty \leq C_f \|m-P_\mathcal{R}m\|_{L^\infty([-\pi,\pi])} \leq C_{f,m} \left(\frac{1}{r}\right)^{k-1}.
 \end{equation*}
 Therefore, by the same reasoning as in the proof of Lemma \ref{lem: bound on E}, we have 
 \begin{equation*}
     |T_{\omega,\ell}-T'_{\omega,\ell}|\leq (|U_{\omega,\ell}|+|U'_{\omega,\ell}|)(|U_{\omega,\ell}-U'_{\omega,\ell}|)\leq  C_{f,m} \left(\frac{1}{r}\right)^{k-1}.
 \end{equation*}
 \end{proof}

\section{Wigner Deconvolution}\label{sec: WDD}

In this section, we will use a Wigner Deconvolution method based on \cite{Perlmutter2020} to recover $\mathbf{x}$ from the matrix $\mathbf{T}'$ defined in \eqref{eq: Tprime}.
In order to do this, we
 let $\mathbf{E}$ be the total  error matrix defined by 
\begin{equation*}
    \mathbf{E}\coloneqq \mathbf{Y}-\mathbf{T}'.
\end{equation*}
We note that $\mathbf{E}$  can be decomposed by
\begin{equation*}
    \mathbf{E}=(\mathbf{Z}-\mathbf{T}')+\bm{\eta},
\end{equation*}
where $(\mathbf{Z}-\mathbf{T}')$ is the error due to discretization and $\bm{\eta}$ is measurement noise. 
 Let $K$ and $L$ divide $d.$ 
 Let $\mathbf{E_{K,L}},$ $\mathbf{T'_{K,L}},$ and $\mathbf{\bm\eta_{K,L}}$ be the $K\times L$ matrices obtained by subsampling the columns of $\mathbf{E},$ $\mathbf{T'}$, and $\bm{\eta}$ as in \eqref{eqn: subsample}.
  Similarly to \cite{Perlmutter2020}, we introduce the quantities $\tilde{\mathbf{E}}$ and $\tilde{\mathbf{T}}$ defined by \begin{equation*}\tilde{\mathbf{E}}\coloneqq \mathbf{F_L}\mathbf{E_{K,L}}^T\mathbf{F_K}^T\quad\text{ and }\quad 
 \mathbf{\tilde{T}}\coloneqq \mathbf{F_L}(\mathbf{T'_{K,L}})^T\mathbf{F_K}^T.
 \end{equation*}
 Since $\sqrt{L}\mathbf{F_L}$ and $\sqrt{K}\mathbf{F_K}$ are unitary, we have 
\begin{equation*}
\|\tilde{\mathbf{E}}\|_F= \|\mathbf{F_L}\mathbf{E_{K,L}}^T\mathbf{F_K}^T\|_F\leq \frac{1}{\sqrt{KL}}\|\mathbf{E_{K,L}}\|_F\leq \|\mathbf{Z}-\mathbf{T'}\|_\infty + \frac{1}{\sqrt{KL}}\|\mathbf{\bm{\eta}_{K,L}}\|_F.
\end{equation*}
Therefore, Lemmas \ref{lem: bound on E} and \ref{lem TvsTprime} imply that under Assumption \ref{as: trigpoly} we have
\begin{equation}\label{eq: initial Etildebound} 
\|\tilde{\mathbf{E}}\|_F\leq C_{f,m}\left(\frac{1}{s}\right)^{k-1} + \frac{1}{\sqrt{KL}}\|\mathbf{\bm{\eta}_{K,L}}\|_F,
\end{equation} and that under Assumption \ref{as: compact} we have
\begin{equation}\label{eq: initial Etildeboundcompact}
\|\tilde{\mathbf{E}}\|_F\leq C_{f,m}\bigg(\left(\frac{1}{s}\right)^{k-1}+\left(\frac{1}{r}\right)^{k-1}\bigg) + \frac{1}{\sqrt{KL}}\|\mathbf{\bm{\eta}_{K,L}}\|_F.
\end{equation}
It follows from Theorem 4 of \cite{Perlmutter2020} that 
\begin{align}
\tilde{T}_{\ell,\omega}
&=
4\pi^2d
\sum_{q\in\left[\frac{d}{L}\right]_c}
\sum_{p\in\left[\frac{d}{K}\right]_c}\left(\mathbf{\mathbf{F_d}}\left({\widehat{\bf {\bf x}}}\circ S_{qL-\ell}\overline{{\widehat{\bf {\bf x}}}}\right)\right)_{\omega-pK}\left(\mathbf{F_{d}}\left(\widehat{{\bf {\bf z}}}\circ S_{\ell-qL}\overline{\widehat{{\bf {\bf z}}}}\right)\right)_{\omega-p K}+\tilde{E}_{\ell,\omega} \label{eq: doublealiasingfourier}\\
&=
\frac{4\pi^2}{d}
\sum_{q\in\left[\frac{d}{L}\right]_c}
\sum_{p\in\left[\frac{d}{K}\right]_c}\left(\mathbf{\mathbf{F_d}}\left({\bf {\bf x}}\circ S_{\omega-pK}\overline{{\bf {\bf x}}}\right)\right)_{\ell-qL}\left(\mathbf{F_{d}}\left({\bf {\bf z}}\circ S_{\omega-p K}\overline{{\bf {\bf z}}}\right)\right)_{qL-\ell}+\tilde{E}_{\ell,\omega}. \label{eq: doublealiasingspace}
\end{align}
In Sections \ref{sec: wddas1} and \ref{sec: wddas2}, we will be able to use  \eqref{eq: doublealiasingfourier} and \eqref{eq: doublealiasingspace} to recover a portion of the Fourier autocorrelation matrix $\widehat{\mathbf{x}}\widehat{\mathbf{x}}^*$.  (Note that \cite{Perlmutter2020} uses a different normalization of the discrete Fourier transform and consequently \eqref{eq: doublealiasingfourier} and \eqref{eq: doublealiasingspace} have different powers of $d$ than the corresponding equations there.)

\subsection{Wigner Deconvolution Under Assumption \ref{as: trigpoly}\label{sec: wddas1}}

In this subsection, we will assume our mask $\tilde{m}(x)$ satisfies Assumption \ref{as: trigpoly}, i.e., that it is a trigonometric polynomial with at most $\rho$ nonzero coefficients for some $\rho\leq r-1$. We also assume that $K=d,$ that $L$ divides $d$, and that $L=\rho+\kappa$ for some $2\leq \kappa\leq \rho$. 

Since $K=d,$ equation \eqref{eq: doublealiasingfourier} simplifies to
\begin{equation*}
\tilde{T}_{\ell,\omega}=4\pi^2d
\sum_{q\in\left[\frac{d}{L}\right]_c}\left(\mathbf{\mathbf{F_d}}\left({\bf {\bf \widehat{x}}}\circ S_{q L-\ell}\overline{{\bf {\bf \widehat{x}}}}\right)\right)_{\omega}\left(\mathbf{F_{d}}\left({\bf \widehat{{\bf z}}}\circ S_{\ell-q L}\overline{{\bf \widehat{{\bf z}}}}\right)\right)_{\omega}+\tilde{E}_{\ell,\omega}.
\end{equation*}
By construction, $\text{supp}(\widehat{\mathbf{z}})\subseteq[\rho+1]_c$.
Therefore, if $1-\kappa\leq \ell\leq \kappa-1$, we may use the same reasoning as in the proof of Lemma 10  of \cite{Perlmutter2020},
to see
\begin{equation*}
    {\bf \widehat{{\bf z}}}\circ S_{\ell-q L}\overline{{\bf \widehat{{\bf z}}}}=0
\end{equation*}
except for when $q=0.$ Thus,
\begin{equation}\label{eq:discrete-meas3}
\tilde{T}_{\ell,\omega}=
4\pi^2d\left(\mathbf{F_d}\left({\bf {\bf \widehat{x}}}\circ S_{-\ell}\overline{{\bf {\bf \widehat{x}}}}\right)\right)_{\omega}\left(\mathbf{F_d}\left({\bf \widehat{{\bf z}}}\circ S_{\ell}\overline{{\bf \widehat{{\bf z}}}}\right)\right)_{\omega}+\tilde{E}_{\ell,\omega}\quad\text{for all }|\ell|\leq \kappa-1.
\end{equation}

In order use \eqref{eq:discrete-meas3} to solve for $\left(\mathbf{F_d}\left({\bf {\bf \widehat{x}}}\circ S_{-\ell}\overline{{\bf {\bf \widehat{x}}}}\right)\right)_{\omega}$, we must divide by $\left(\mathbf{F_d}\left({\bf \widehat{{\bf z}}}\circ S_{\ell}\overline{{\bf \widehat{{\bf z}}}}\right)\right)_{\omega}$.
This motivates us to introduce a mask-dependent constant defined by   \begin{equation}\label{eq: mu1}\mu_1\coloneqq 
\min_{|p|\leq\kappa-1,q\in\mathcal{D}}\bigg|\left({\bf F_d}\left(\widehat{{\bf z}}\circ S_p\overline{\widehat{\bf z}}\right)\right)_q\bigg|.
\end{equation}
 Proposition 
 \ref{prop:mu_condition} shows that it is relatively simple to construct a trigonometric polynomial $\tilde{m}(x)$ such that $\mu_1$ is strictly positive. For a proof, please see Appendix \ref{app: mask design}.

\begin{proposition}
\label{prop:mu_condition} 
Assume that $\tilde{m}$ satisfies Assumption \ref{as: trigpoly}. Further assume
\begin{equation}
\left|\widehat{m}\left(-\frac{\rho}{2}\right)\right|>2\rho\left|\widehat{m}\left(-\frac{\rho}{2}+1\right)\right|\label{eqn: a0big}
\end{equation}
and 
\begin{equation}
\left|\widehat{m}\left(-\frac{\rho}{2}+1\right)\right|\geq \left|\widehat{m}\left(-\frac{\rho}{2}+2\right)\right|\geq \ldots\geq \left|\widehat{m}\left(\frac{\rho}{2}\right)\right|>0.
\label{eqn: decreasingamplitudes}
\end{equation}
Then  the mask-dependent constant $\mu_1$ defined as in \eqref{eq: mu1}
satisfies
 \begin{equation*}\mu_1\geq\frac{1}{2d} \left|\widehat{m}\left(-\frac{\rho}{2}\right)\right|\left|\widehat{m}\left(-\frac{\rho}{2}+\kappa-1|\right)\right|>0.\end{equation*} 
\end{proposition}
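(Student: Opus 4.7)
The strategy is to unpack $\widehat{\mathbf{z}}$ in closed form, reduce the inner quantity to a short exponential sum whose coefficients are the Fourier coefficients of $m$, and then isolate the dominant term $\widehat{m}(-\rho/2)$ via the reverse triangle inequality. Under Assumption \ref{as: trigpoly} we have $m = \tilde{m}$, so that
\[
z_p = m(2\pi p/d) = \sum_{n = -\rho/2}^{\rho/2} \widehat{m}(n)\, \mathbbm{e}^{2\pi \mathbbm{i} n p/d}, \qquad p \in \mathcal{D}.
\]
Comparing with the DFT convention fixed in Section \ref{sec: notationa}, this identifies $\widehat{z}_n = \widehat{m}(n)$ for $|n| \leq \rho/2$ and $\widehat{z}_n = 0$ for all other $n \in \mathcal{D}$, and in particular $\supp(\widehat{\mathbf{z}}) \subseteq [\rho+1]_c$.

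Next, for fixed $p$ with $|p| \leq \kappa - 1$ and any $q \in \mathcal{D}$, setting $w = \mathbbm{e}^{-2\pi \mathbbm{i} q/d}$, I would expand
\[
\bigl(\mathbf{F_d}(\widehat{\mathbf{z}} \circ S_p \overline{\widehat{\mathbf{z}}})\bigr)_q = \frac{1}{d}\sum_{n \in \mathcal{D}} \widehat{z}_n\, \overline{\widehat{z}_{n+p}}\, w^n.
\]
A short bookkeeping check shows that the circular shift $S_p$ does not alias the support of the product: with $|n| \leq \rho/2$, $|p| \leq \kappa - 1 \leq \rho - 1$, and $d \geq L = \rho + \kappa$, the index $n+p$ stays inside $[-(d-1)/2,(d-1)/2]$ whenever both $\widehat{z}_n$ and $\widehat{z}_{n+p}$ are nonzero. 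Substituting $\widehat{m}$ in therefore reduces the quantity to $\frac{1}{d}\sum_n \widehat{m}(n)\overline{\widehat{m}(n+p)}\, w^n$ over $n$ in the intersection $[-\rho/2,\rho/2] \cap [-\rho/2-p,\rho/2-p]$, which contains at most $\rho - |p| + 1$ terms.

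Finally, I would extract the extremal summand and apply the reverse triangle inequality. For $p \geq 1$ the dominant index is $n=-\rho/2$, giving magnitude $|\widehat{m}(-\rho/2)|\,|\widehat{m}(-\rho/2+p)|$; the monotonicity \eqref{eqn: decreasingamplitudes} bounds each of the at most $\rho-p$ residual summands by $|\widehat{m}(-\rho/2+1)|\,|\widehat{m}(-\rho/2+p)|$, and \eqref{eqn: a0big} gives $\rho\,|\widehat{m}(-\rho/2+1)| < \frac{1}{2}|\widehat{m}(-\rho/2)|$, so the residual is strictly smaller than half the leading term. This produces the lower bound $\frac{1}{2d}|\widehat{m}(-\rho/2)|\,|\widehat{m}(-\rho/2+p)|$ on the quantity of interest. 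For $p \leq -1$ the dominant summand is $n = -\rho/2 - p$, which carries the factor $\widehat{m}(-\rho/2)$ through the shifted slot, and the same estimate goes through symmetrically; the case $p = 0$ is handled analogously using $|\widehat{m}(-\rho/2)|^2$ as the dominant term. Minimizing over $|p|\leq \kappa - 1$ and applying \eqref{eqn: decreasingamplitudes} one last time collapses the worst factor $|\widehat{m}(-\rho/2+|p|)|$ down to $|\widehat{m}(-\rho/2+\kappa-1)|$, yielding the stated bound.

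The only genuinely delicate point is the case analysis in the sign of $p$ together with verifying that no modular wrap-around contaminates the support product; the remaining estimates are direct consequences of the hypotheses \eqref{eqn: a0big} and \eqref{eqn: decreasingamplitudes}.
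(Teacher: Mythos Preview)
Your proposal is correct and follows essentially the same route as the paper: identify $\widehat{z}_n=\widehat{m}(n)$ on $[-\rho/2,\rho/2]$, write the quantity as a short exponential sum, peel off the term carrying the factor $\widehat{m}(-\rho/2)$, and dominate the remaining $\rho-|p|$ summands via \eqref{eqn: decreasingamplitudes} and \eqref{eqn: a0big}. The paper unifies the two signs of $p$ by a change of variables rather than treating them as separate symmetric cases, and it bounds each residual summand by $|\widehat{m}(-\rho/2+1)|\,|\widehat{m}(-\rho/2+1+|p|)|$ rather than your slightly coarser $|\widehat{m}(-\rho/2+1)|\,|\widehat{m}(-\rho/2+|p|)|$, but either estimate suffices for the stated conclusion.
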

 
For the rest of this section, we will assume that $\mu_1$ is non-zero. Therefore, we may make a change of variables $\ell\rightarrow-\ell$ in \eqref{eq:discrete-meas3}
to  see that 
\begin{align*}
\left(\mathbf{F_d}\left({\bf {\bf \widehat{x}}}\circ S_{\ell}\overline{{\bf {\bf \widehat{x}}}}\right)\right)_{\omega}&=\frac{1}{4\pi^2d}\bigg(\frac{\tilde{T}_{-\ell,\omega}-\tilde{E}_{-\ell,\omega}}{(\mathbf{F_d}(\widehat{\mathbf{z}}\circ S_{-\ell}\widehat{\mathbf{z}}))_\omega}\bigg)\\
&=\frac{1}{4\pi^2d}\bigg(\frac{\tilde{T}_{-\ell,\omega}}{(\mathbf{F_d}(\widehat{\mathbf{z}}\circ S_{-\ell}\overline{\widehat{\mathbf{z}}}))_\omega}\bigg)- \frac{1}{4\pi^2d}\bigg(\frac{\tilde{E}_{-\ell,\omega}}{(\mathbf{F_d}(\widehat{\mathbf{z}}\circ S_{-\ell}\overline{\widehat{\mathbf{z}}}))_\omega}\bigg)
\end{align*}
for all $1-\kappa\leq \ell\leq \kappa-1$. 
Writing the above equation in column form, we have 
\begin{equation*}
\mathbf{F_d}\left({\bf {\bf \widehat{x}}}\circ S_{\ell}\overline{{\bf {\bf \widehat{x}}}}\right)=\frac{1}{4\pi^2d}\bigg(\frac{\tilde{\mathbf{T}}^T_{-\ell}}{\mathbf{F_d}(\widehat{\mathbf{z}}\circ S_{-\ell}\overline{\widehat{\mathbf{z}}})}\bigg)- \frac{1}{4\pi^2d}\bigg(\frac{\tilde{\mathbf{E}}^T_{-\ell}}{\mathbf{F_d}(\widehat{\mathbf{z}}\circ S_{-\ell}\overline{\widehat{\mathbf{z}}})}\bigg)
\end{equation*}
and so
\begin{equation}\label{eqn: diagbands1}
{\bf {\bf \widehat{x}}}\circ S_{\ell}\overline{{\bf {\bf \widehat{x}}}}
=\frac{1}{4\pi^2d}\mathbf{F_d}^{-1}\bigg(\frac{\tilde{\mathbf{T}}^T_{-\ell}}{\mathbf{F_d}(\widehat{\mathbf{z}}\circ S_{-\ell}\overline{\widehat{\mathbf{z}}}))}\bigg)
- \frac{1}{4\pi^2d}\mathbf{F_d}^{-1}\bigg(\frac{\tilde{\mathbf{E}}^T_{-\ell}}{\mathbf{F_d}(\widehat{\mathbf{z}}\circ S_{-\ell}\overline{\widehat{\mathbf{z}}}}\bigg),
\end{equation}
where, as mentioned in Section \ref{sec: intro}, the division of vectors is defined componentwise and $\mathbf{M}_j$ denotes the $j$-th column of a matrix $\mathbf{M}$.

  Let  $T_\kappa : \mathbb{C}^{d \times d} \rightarrow \mathbb{C}^{d \times d}$ be the restriction  operator defined for $\mathbf{M}\in\mathbb{C}^{d\times d}$ by \begin{equation*} T_\kappa(\mathbf{M})_{ij}
=\begin{cases}
M_{i,j} &\text{if } |i-j| \leq \kappa-1,\\
0 &\text{otherwise}.
\end{cases}
  \end{equation*} 
  Then, we may rewrite \eqref{eqn: diagbands1} 
 in matrix form as 
\begin{equation}\label{eq: angsynch1xxstar}
T_\kappa(\widehat{\mathbf{x}}\widehat{\mathbf{x}}^*) = \mathbf{X} + \tilde{\mathbf{N}},
\end{equation}
where the matrices $\mathbf{X}=(X_{i,j})_{i,j\in\mathcal{D}}$ and $\tilde{\mathbf{N}}=(\tilde{N}_{i,j})_{i,j\in\mathcal{D}}$ have entries  defined by
\begin{equation}\label{eq: defX}
    X_{i,j} = \begin{cases}\frac{1}{4\pi^2d}\left(\mathbf{F_d}^{-1}\left(\frac{\tilde{\mathbf{T}}^T_{i-j}}{\mathbf{F_d}(\widehat{\mathbf{z}}\circ S_{i-j}\overline{\widehat{\mathbf{z}}})}\right)\right)_{i}&\text{if }  |i-j|\leq \kappa-1,\\
       0&\text{otherwise},
       \end{cases}
\end{equation}
and \begin{equation*}
    \tilde{N}_{i,j} = \begin{cases}\frac{-1}{4\pi^2d}\left(\mathbf{F_d}^{-1}\left(\frac{\tilde{\mathbf{E}}^T_{i-j}}{\mathbf{F_d}(\widehat{\mathbf{z}}\circ S_{i-j}\overline{\widehat{\mathbf{z}}})}\right)\right)_{i}&\text{if }  |i-j|\leq \kappa-1,\\
       0&\text{otherwise}.
       \end{cases}
\end{equation*}

For a $d\times d$ matrix,  $\mathbf{M}=(M_{i,j})_{i,j\in\mathcal{D}}$,
let $R(\mathbf{M})=(R(M)_{i,j})_{i\in\mathcal{D}, j\in[2\kappa-1]_c}$ be the $d\times (2\kappa-1)$ matrix with entries defined by
\begin{equation*}
    R(M)_{i,j}=M_{i,i+j}.
\end{equation*}
Note that the columns of $R(\mathbf{M})$ are the diagonal bands of $\mathbf{M}$ which are near the main diagonal, and that in particular, the middle column, column zero, is the main diagonal. 
Since $\tilde{\mathbf{N}}$ is a banded matrix whose nonzero terms are within $\kappa$ of the main diagonal,
we see
\begin{equation*}
    \|\tilde{\mathbf{N}}\|_F = \|R(\tilde{\mathbf{N}})\|_F.
\end{equation*}  
 Therefore, 
 since
 $\frac{1}{\sqrt{d}}\mathbf{F_d}^{-1}$ is unitary, we may bound the $\ell^2$-norm of the columns of  $R(\tilde{\mathbf{N}})$  by
 \begin{equation*}
     \|R(\tilde{\mathbf{N}})_{j}\|_2 = \bigg\|\frac{1}{4\pi^2d}\mathbf{F_d}^{-1}\bigg(\frac{\tilde{\mathbf{E}}^T_{-j}}{\mathbf{F_d}(\widehat{\mathbf{z}}\circ S_{-j}\overline{\widehat{\mathbf{z}}})}\bigg)\bigg\|_2 \leq \frac{1}{4\pi^2d^{1/2}}\bigg\|\frac{\tilde{\mathbf{E}}^T_{-j}}{\mathbf{F_d}(\widehat{\mathbf{z}}\circ S_{-j}\overline{\widehat{\mathbf{z}}})}\bigg\|_2\leq \frac{1}{4\pi^2d^{1/2}\mu_1}\|\tilde{\mathbf{E}}^T_{-j}\|_2,
 \end{equation*}
 where $\mu_1$ is the mask-dependent constant defined in \eqref{eq: mu1}. 
  Therefore, by \eqref{eq: initial Etildebound} with $K=d$, we have
 \begin{equation}\label{eq: bound on N}
 \|\tilde{\mathbf{N}}\|_F=\|R(\tilde{\mathbf{N}})\|_F\leq C\frac{1}{d^{1/2}\mu_1}\|\mathbf{\tilde{E}}\|_F\leq C_{f,m}\frac{1}{d^{1/2}\mu_1}\bigg(
\left(\frac{1}{s}\right)^{k-1} + \frac{1}{\sqrt{dL}}\|\mathbf{\bm{\eta}_{d,L}}\|_F\bigg).
\end{equation}

Let $H:\mathbb{C}^{d\times d}\rightarrow\mathbb{C}^{d\times d}$ be the Hermitianizing operator 
\begin{equation}\label{eq: H}
    H(\mathbf{M})=\frac{\mathbf{M}+\mathbf{M}^*}{2}.
\end{equation}
 Since $T_\kappa( \mathbf{x}\mathbf{x}^*)$ is Hermitian, applying  $H$ to both sides of \eqref{eq: angsynch1xxstar}
 yields
\begin{equation}\label{eq: angsynch1}
    T_\kappa(\widehat{\mathbf{x}}\widehat{\mathbf{x}}^*)=
    \mathbf{A}+\mathbf{N},
\end{equation}
where 
\begin{equation} \label{eq: defA}
    \mathbf{A}\coloneqq H(\mathbf{X}) \text{ and } \mathbf{N}\coloneqq H({\bf \tilde{N}}).
\end{equation}
We note that by \eqref{eq: bound on N} and the triangle inequality, we have
\begin{equation}\label{eq: second bound on N}
 \|\mathbf{N}\|_F\leq\|\tilde{\mathbf{N}}\|_F\leq  C_{f,m}\frac{1}{d^{1/2}\mu_1}\bigg(
\left(\frac{1}{s}\right)^{k-1} + \frac{1}{\sqrt{dL}}\|\mathbf{\bm{\eta}_{d,L}}\|_F\bigg).
\end{equation}

\subsection{Wigner Deconvolution Under Assumption \ref{as: compact} \label{sec: wddas2}}

In this subsection, we assume $f(x)$ and $\tilde{m}(x)$ satisfy Assumption \ref{as: compact}, i.e., that $\text{supp}(f)\subseteq(-a,a)$ and $\text{supp}(\tilde{m})\subseteq(-b,b)$ with $a+b<\pi$. Note that, by construction,  this implies that the vector $\mathbf{z}$ defined in \eqref{eq:def-x} satisfies $\text{supp}(\mathbf{z})\subseteq [\delta+1]_c$, where $\delta=\lfloor\frac{bd}{\pi}\rfloor.$ We also assume that $L=d,$ that $K$ divides $d$ and that $K=\delta+\kappa$ for some $2\leq \kappa\leq \delta$. Furthermore, we let $s<2\kappa-1.$

Since $L=d,$ equation \eqref{eq: doublealiasingspace} simplifies to
\begin{equation*}
\tilde{T}_{\ell,\omega}= \frac{4\pi^2}{d}
\sum_{p\in\left[\frac{d}{K}\right]_c}\left(\mathbf{\mathbf{F_d}}\left({\bf {\bf x}}\circ S_{\omega-pK}\overline{{\bf {\bf x}}}\right)\right)_{\ell}\left(\mathbf{F_{d}}\left({\bf {\bf z}}\circ S_{\omega-p K}\overline{{\bf {\bf z}}}\right)\right)_{-\ell}+\tilde{E}_{\ell,\omega}.
\end{equation*}
Furthermore, if  $|\omega|\leq \kappa-1,$ then by the same reasoning as in Lemma 11 and Remark 1 of \cite{Perlmutter2020}, all terms in the above sum are zero except for the term corresponding to $p=0.$
 Therefore,
\begin{equation}
\tilde{T}_{\ell,\omega}=
\frac{4\pi^2}{d}
\left(\mathbf{\mathbf{F_d}}\left({\bf {\bf x}}\circ S_{\omega}\overline{{\bf {\bf x}}}\right)\right)_{\ell}\left(\mathbf{F_{d}}\left({\bf {\bf z}}\circ S_{\omega}\overline{{\bf {\bf z}}}\right)\right)_{-\ell}+\tilde{E}_{\ell,\omega}\quad\text{for all }|\omega|\leq \kappa-1.\label{eq:FYF_single_aliasing}
\end{equation}

The following lemma is a restatement of Lemma 3 of \cite{Perlmutter2020}, although we note that our result appears slightly different due to the fact that we use a different normalization of the discrete Fourier transform.
\begin{lemma}\label{lem: switchbackandforth} For all $\ell$ and $\omega,$ we have
\begin{equation*}
    \left(\mathbf{\mathbf{F_d}}\left({\bf {\bf x}}\circ S_{\omega}\overline{{\bf {\bf x}}}\right)\right)_{\ell} = d\mathbbm{e}^{2\pi \mathbbm{i} \omega\ell/d}
\left(\mathbf{\mathbf{F_d}}\left({\bf {\widehat{\bf x}}}\circ S_{-\ell}\overline{{\bf {\widehat{\bf x}}}}\right)\right)_{\omega}.
\end{equation*}
\end{lemma}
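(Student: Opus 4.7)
The plan is to prove the identity by direct computation, expanding both sides using the definition $(\mathbf{F_d})_{j,k}=\tfrac{1}{d}\mathbbm{e}^{-2\pi\mathbbm{i} jk/d}$ and the convention $(S_\ell \mathbf{x})_p = x_{p+\ell}$, then collapsing one sum via the discrete orthogonality relation $\sum_{n\in\mathcal{D}}\mathbbm{e}^{2\pi\mathbbm{i} nk/d} = d\cdot\mathbbm{1}[k\equiv 0 \bmod d]$.

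First, I would expand the left-hand side as
\begin{equation*}
\bigl(\mathbf{F_d}(\mathbf{x}\circ S_\omega\overline{\mathbf{x}})\bigr)_\ell \;=\; \frac{1}{d}\sum_{p\in\mathcal{D}} x_p\,\overline{x_{p+\omega}}\,\mathbbm{e}^{-2\pi\mathbbm{i}\ell p/d}.
\end{equation*}
On the right-hand side, I would similarly expand
\begin{equation*}
d\,\mathbbm{e}^{2\pi\mathbbm{i}\omega\ell/d}\bigl(\mathbf{F_d}(\widehat{\mathbf{x}}\circ S_{-\ell}\overline{\widehat{\mathbf{x}}})\bigr)_\omega \;=\; \mathbbm{e}^{2\pi\mathbbm{i}\omega\ell/d}\sum_{n\in\mathcal{D}}\widehat{x}_n\,\overline{\widehat{x}_{n-\ell}}\,\mathbbm{e}^{-2\pi\mathbbm{i}\omega n/d},
\end{equation*}
and then substitute the inverse DFT formulas $\widehat{x}_n = \frac{1}{d}\sum_p x_p\mathbbm{e}^{-2\pi\mathbbm{i} np/d}$ and $\overline{\widehat{x}_{n-\ell}} = \frac{1}{d}\sum_q \overline{x_q}\,\mathbbm{e}^{2\pi\mathbbm{i}(n-\ell)q/d}$.

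The key step is then to interchange the order of summation so that the sum over $n$ is performed first. Its exponent is $2\pi\mathbbm{i} n(q-p-\omega)/d$, and by the orthogonality relation above this inner sum equals $d$ when $q\equiv p+\omega\pmod d$ and vanishes otherwise. This collapses the double sum over $p,q$ into a single sum over $p$ with $q=p+\omega$, and after combining the leftover phase $\mathbbm{e}^{-2\pi\mathbbm{i}\ell(p+\omega)/d}$ with the prefactor $\mathbbm{e}^{2\pi\mathbbm{i}\omega\ell/d}$ the factor $\mathbbm{e}^{-2\pi\mathbbm{i}\omega\ell/d}$ cancels, yielding exactly $\frac{1}{d}\sum_p x_p\,\overline{x_{p+\omega}}\,\mathbbm{e}^{-2\pi\mathbbm{i}\ell p/d}$, which matches the left-hand side.

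There is no real obstacle here; the proof is just a bookkeeping exercise. The only thing to be careful about is tracking the sign conventions in the shift operator and in the DFT kernel, and the change of variable $q=p+\omega$ (which works modulo $d$ because every quantity is $d$-periodic and all sums are over $\mathcal{D}$). Conceptually, this identity is a discrete manifestation of the well-known symmetry of the ambiguity function under the swap $(\omega,\ell)\mapsto(-\ell,\omega)$, so an alternative route would be to invoke that symmetry directly, but the direct computation sketched above is the cleanest.
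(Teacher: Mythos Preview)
Your proof is correct: the direct expansion via the DFT definition, substitution of the inverse-DFT formulas for $\widehat{x}_n$ and $\overline{\widehat{x}_{n-\ell}}$, and collapse of the $n$-sum by orthogonality yields the left-hand side exactly as you describe. The paper itself does not prove this lemma at all; it simply cites it as a restatement of Lemma~3 of \cite{Perlmutter2020} (with adjusted normalization), so your self-contained computation in fact supplies more than the paper does.
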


Applying Lemma \ref{lem: switchbackandforth} to \eqref{eq:FYF_single_aliasing}, we see that
\begin{equation}
\tilde{T}_{\ell,\omega}=
4\pi^2d
\left(\mathbf{\mathbf{F_d}}\left({\bf {\widehat{\bf x}}}\circ S_{-\ell}\overline{{\bf {\widehat{\bf x}}}}\right)\right)_{\omega}\left(\mathbf{F_{d}}\left({\bf \widehat{{\bf z}}}\circ S_{\ell}\overline{{\bf \widehat{{\bf z}}}}\right)\right)_{\omega}+\tilde{E}_{\ell,\omega}\label{eq:FYF_single_aliasing-modified}
\end{equation}
for all $|\omega|\leq \kappa-1$.
In order to solve for $\left(\mathbf{\mathbf{F_d}}\left({\bf {\widehat{\bf x}}}\circ S_{-\ell}\overline{{\bf {\widehat{\bf x}}}}\right)\right)_{\omega},$ we need to divide by $\left(\mathbf{F_{d}}\left({\bf \widehat{{\bf z}}}\circ S_{\ell}\overline{{\bf \widehat{{\bf z}}}}\right)\right)_{\omega}$. This motivates us to introduce a second mask-dependent constant given by 
\begin{equation}\label{eq: mu2}\mu_2\coloneqq \min_{\omega\in[2\kappa-1]_c,\ell\in[2s-1]_c} \bigg|
    \left(\mathbf{F_d}\left(\widehat{\mathbf{z}}\circ S_{\ell}\overline{\widehat{\mathbf{z}}}\right)\right)_\omega\bigg|.
\end{equation}
Proposition 
 \ref{prop: mu2} shows that, for any given $d$,  it is relatively simple to construct a mask $\tilde{m}(x)$ such that $\mu_2$ is strictly positive. For a proof please see Appendix \ref{app: mask design}.
 
\begin{proposition}\label{prop: mu2}
Assume that $\tilde{m}(x)$ satisfies Assumption \ref{as: compact}. Let $\mathbf{z}=(z_p)_{p\in\mathcal{D}}$ be the vector defined as in \eqref{eq:def-x} by $z_p=m\left(\frac{2\pi p}{d}\right)$, and let $\delta = \lfloor\frac{b}{\pi}d\rfloor.$ Let $\tilde{\delta}\leq \delta+1$ and assume that $\text{supp}(\mathbf{z})=\{n, n+1, \ldots, n+\tilde\delta-1\}$
for some $\kappa\leq\tilde{\delta}\leq \delta+1.$
Further assume that 
\begin{equation}\label{eqn: big z n}
    |z_n| >2\tilde{\delta} |z_{n+1}|
\end{equation}
and that 
\begin{equation}\label{eqn: zs decrease}
    |z_{n+1}|\geq |z_{n+2}|\geq\ldots |z_{n+\tilde\delta-1}|>0.
\end{equation}
Then the mask-dependent constant $\mu_2$ defined in \eqref{eq: mu2} satisfies \begin{equation*}\mu_2
 \geq \frac{1}{2d^2}|z_n||z_{n+\kappa-1}|>0.
\end{equation*}
\end{proposition}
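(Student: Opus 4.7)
My plan is to reduce the quantity $(\mathbf{F_d}(\widehat{\mathbf{z}}\circ S_\ell \overline{\widehat{\mathbf{z}}}))_\omega$, which lives on the Fourier side, to a compactly supported sum in the space domain where the structural assumptions \eqref{eqn: big z n}--\eqref{eqn: zs decrease} on $\mathbf{z}$ can be exploited directly. This mirrors the strategy used for $\mu_1$ in Proposition~\ref{prop:mu_condition}, but now the compact support lives in space rather than in frequency.

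First I would apply Lemma \ref{lem: switchbackandforth} (with the substitutions $\mathbf{x}\leftarrow\mathbf{z}$, $\omega'\leftarrow\omega$, $\ell'\leftarrow-\ell$) to obtain the identity
\[
\bigl(\mathbf{F_d}(\widehat{\mathbf{z}}\circ S_\ell\overline{\widehat{\mathbf{z}}})\bigr)_\omega
\;=\; \frac{1}{d}\,\mathbbm{e}^{2\pi\mathbbm{i}\omega\ell/d}\,\bigl(\mathbf{F_d}(\mathbf{z}\circ S_\omega\overline{\mathbf{z}})\bigr)_{-\ell},
\]
which accounts for the $1/d^2$ appearing in the target bound once the additional $1/d$ inside $\mathbf{F_d}$ is counted. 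Because $\mathrm{supp}(\mathbf{z})=\{n,n+1,\ldots,n+\tilde\delta-1\}$, the vector $\mathbf{z}\circ S_\omega\overline{\mathbf{z}}$ is supported on at most $\tilde\delta-|\omega|$ consecutive indices, so $(\mathbf{F_d}(\mathbf{z}\circ S_\omega\overline{\mathbf{z}}))_{-\ell}$ becomes a finite sum $\frac{1}{d}\sum_{p} z_p\overline{z_{p+\omega}}\,\mathbbm{e}^{2\pi\mathbbm{i}\ell p/d}$ over at most $\tilde\delta$ terms, independent of $\ell$.

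The heart of the argument is a triangle-inequality bound that isolates the dominant term at $p=n$. For $0\le\omega\le\kappa-1$, the term $p=n$ contributes $z_n\overline{z_{n+\omega}}$, of magnitude $|z_n||z_{n+\omega}|$, while all remaining terms have $p\ge n+1$ and can be bounded by $|z_{n+1}||z_{n+\omega}|$ using \eqref{eqn: zs decrease} applied to both factors. Summing at most $\tilde\delta-1$ such terms and invoking \eqref{eqn: big z n} in the form $\tilde\delta|z_{n+1}|<|z_n|/2$ gives
\[
\Bigl|\sum_p z_p\overline{z_{p+\omega}}\,\mathbbm{e}^{2\pi\mathbbm{i}\ell p/d}\Bigr|\;\ge\;|z_n||z_{n+\omega}|-\tilde\delta|z_{n+1}||z_{n+\omega}|\;>\;\tfrac12|z_n||z_{n+\omega}|\;\ge\;\tfrac12|z_n||z_{n+\kappa-1}|,
\]
where the last inequality uses $|z_{n+\omega}|\ge|z_{n+\kappa-1}|$ from \eqref{eqn: zs decrease}. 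The case $\omega=0$ is handled analogously; the remainder is bounded by $\tilde\delta|z_{n+1}|^2<\tfrac12|z_n||z_{n+1}|$, and since $|z_n|>2|z_{n+1}|$ one checks $|z_n|^2-\tfrac12|z_n||z_{n+1}|\ge\tfrac12|z_n||z_{n+\kappa-1}|$ directly. Finally, the case $-(\kappa-1)\le\omega<0$ reduces to the positive case by the elementary identity $|(\mathbf{F_d}(\mathbf{z}\circ S_{-\omega}\overline{\mathbf{z}}))_{-\ell}|=|(\mathbf{F_d}(\mathbf{z}\circ S_\omega\overline{\mathbf{z}}))_\ell|$ (obtained from a change of index and complex conjugation), so the bound is independent of the sign of $\omega$ as well as of $\ell$. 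Combining with the factor $1/d$ from Lemma \ref{lem: switchbackandforth} yields the claim.

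I expect the only subtlety to be the careful separation of $\omega=0$ from $\omega\ge 1$: in the latter case one needs the extra decay factor $|z_{n+\omega}|$ in the remainder to avoid a useless bound, whereas in the former case one must use that $|z_{n+1}|$ itself is a factor of $|z_n|/(2\tilde\delta)$ smaller than $|z_n|$. Once this bookkeeping is done, neither the choice of $\ell\in[2s-1]_c$ nor the modulus $\mathbbm{e}^{2\pi\mathbbm{i}\omega\ell/d}=1$ plays any role, so the minimum over $\omega$ and $\ell$ defining $\mu_2$ is controlled uniformly by $\tfrac{1}{2d^2}|z_n||z_{n+\kappa-1}|$.
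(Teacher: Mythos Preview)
Your proposal is correct and follows essentially the same route as the paper: apply Lemma~\ref{lem: switchbackandforth} to move to the space side, identify the support of $\mathbf{z}\circ S_\omega\overline{\mathbf{z}}$, isolate the dominant term at index $n$, and kill the remainder with \eqref{eqn: big z n}--\eqref{eqn: zs decrease}. The only cosmetic difference is that the paper bounds each tail term by $|z_{n+1}||z_{n+1+|p|}|$ rather than your $|z_{n+1}||z_{n+\omega}|$, which lets it treat $\omega=0$ and $\omega\ge 1$ uniformly and handle negative $\omega$ by a change of summation index rather than conjugation; your separate treatment of $\omega=0$ is therefore unnecessary but harmless.
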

\begin{remark}
Given any vector $\mathbf{z}=(z_p)_{p\in\mathcal{D}},$ one may construct, e.g., through spline interpolation, a function $\tilde{m}(x)$ such that $\tilde{m}\left(\frac{2\pi p}{d}\right)=z_p$ for all $p\in\mathcal{D}.$ 
\end{remark}

For the rest of this section, we will assume that $\mu_2$ is not equal to zero. 
Therefore, we may make a change of variables $\ell\rightarrow-\ell$ in \eqref{eq:FYF_single_aliasing-modified}
to  see that 
\begin{align*}
\left(\mathbf{F_d}\left({\bf {\bf \widehat{x}}}\circ S_{\ell}\overline{{\bf {\bf \widehat{x}}}}\right)\right)_{\omega}&=\frac{1}{4\pi^2d}\bigg(\frac{\tilde{T}_{-\ell,\omega}-\tilde{E}_{-\ell,\omega}}{(\mathbf{F_d}(\widehat{\mathbf{z}}\circ S_{-\ell}\overline{\widehat{\mathbf{z}}}))_\omega}\bigg)\nonumber\\
&=\frac{1}{4\pi^2d}\bigg(\frac{\tilde{T}_{-\ell,\omega}}{(\mathbf{F_d}(\widehat{\mathbf{z}}\circ S_{-\ell}\overline{\widehat{\mathbf{z}}}))_\omega}\bigg)- \frac{1}{4\pi^2d}\bigg(\frac{\tilde{E}_{-\ell,\omega}}{(\mathbf{F_d}(\widehat{\mathbf{z}}\circ S_{-\ell}\overline{\widehat{\mathbf{z}}}))_\omega}\bigg).
\end{align*}
Now, recall that $s\leq 2\kappa-1$, and let $\mathbf{B}\coloneqq(B_{\omega,\ell}),  \mathbf{C}\coloneqq(C_{\omega,\ell})$, and $\mathbf{D}\coloneqq(D_{\omega,\ell})$ be $(2\kappa-1)\times (2s-1)$ matrices with entries defined by 
\begin{equation}\label{eq: defU}
B_{\omega,\ell}=\left(\mathbf{F_d}\left({\bf {\bf \widehat{x}}}\circ S_{\ell}\overline{{\bf {\bf \widehat{x}}}}\right)\right)_{\omega},\quad
C_{\omega,\ell} =\frac{1}{4\pi^2d}\bigg(\frac{\tilde{T}_{-\ell,\omega}}{(\mathbf{F_d}(\widehat{\mathbf{z}}\circ S_{-\ell}\overline{\widehat{\mathbf{z}}}))_\omega}\bigg),\quad 
D_{\omega,\ell}
= \frac{-1}{4\pi^2d}\bigg(\frac{\tilde{E}_{-\ell,\omega}}{(\mathbf{F_d}(\widehat{\mathbf{z}}\circ S_{-\ell}\overline{\widehat{\mathbf{z}}}))_\omega}\bigg)
\end{equation}
for $\omega\in[2\kappa-1]_c$ and $\ell\in[2s-1]_c$
so that 
\begin{equation*}
    \mathbf{B}=\mathbf{C}+\mathbf{D}.
\end{equation*}
Note that 
\begin{equation}\label{eq: Vbound}
    \|\mathbf{D}\|_F\leq \frac{1}{4\pi^2d\mu_2}\|\tilde{\mathbf{E}}\|_F, 
\end{equation}
where $\mu_2$ is the mask-dependent constant defined in \eqref{eq: mu2}.

Next observe that we may factor $\mathbf{B}=\mathbf{WV}$, where $\mathbf{V}\coloneqq(V_{j,k})_{j\in\mathcal{S},k\in[2s-1]_c}$ is the $s\times(2s-1)$ matrix with entries defined by $V_{j,k}=(\widehat{\mathbf{x}}\circ S_k\overline{\widehat{\mathbf{x}}})_j$
and
$\mathbf{W}\coloneqq(W_{j,k})_{j\in[2\kappa-1]_c,k\in\mathcal{S}}$ is the $(2\kappa-1)\times s$ partial Fourier matrix with entries $W_{j,k}=(\mathbf{F_d})_{j,k}.$ 
Since $s\leq 2\kappa-1$, we may  let $\mathbf{W}^\dagger\coloneqq(\mathbf{W}^*\mathbf{W})^{-1}\mathbf{W}^*$ be the pseudoinverse of $\mathbf{W}$ and see
\begin{equation*}
    \mathbf{V}=\mathbf{W}^\dagger\mathbf{C}+ \mathbf{W}^\dagger\mathbf{D}.
\end{equation*}
Now, let $\Lambda:\mathbb{C}^{s\times(2s-1)}\rightarrow\mathbb{C}^{d\times d}$ be the lifting operator defined by 
\begin{equation*}
    (\Lambda(M))_{i,j}=M_{i,j-i}.
\end{equation*}
Note that the columns of $\mathbf{M}$ are  diagonal bands  of $\Lambda(M)$ with the middle column on the main diagonal. 
By construction, we have $T_{2s-1} (\widehat{\mathbf{x}}\widehat{\mathbf{x}}^*)=\Lambda(\mathbf{V})$. Therefore, since $T_{2s-1}(\widehat{\mathbf{x}}\widehat{\mathbf{x}}^*)$ is Hermitian,  we have
\begin{equation*}
    T_{2s-1}(\widehat{\mathbf{x}}\widehat{\mathbf{x}}^*)=H(\Lambda(\mathbf{V})),
\end{equation*}
where $H$ is the Hermitianizing operator introduced in \eqref{eq: H}. Therefore,
\begin{equation}\label{eq: defXcompact} T_{2s-1} (\widehat{\mathbf{x}}\widehat{\mathbf{x}}^*)=
\mathbf{A}+\mathbf{N},
\end{equation}
where
 \begin{equation}\label{eq: defAcompact}
     \mathbf{A}\coloneqq H(\Lambda(\mathbf{W}^\dagger\mathbf{C}))\quad\text{and}\quad \mathbf{N}\coloneqq H(\Lambda(\mathbf{W}^\dagger\mathbf{D})).
 \end{equation}
Since $H$ is contractive, \eqref{eq: Vbound} implies
\begin{equation*}
    \|\mathbf{N}\|_F\leq\|\Lambda(\mathbf{W}^\dagger\mathbf{D})\|= \|\mathbf{W}^\dagger\mathbf{D}\|_F\leq \frac{1}{\sigma_{\min}(\mathbf{W})}\|\mathbf{D}\|_F\leq  \frac{1}{4\pi^2d\mu_2\sigma_{\min}(\mathbf{W})}\|\tilde{\mathbf{E}}\|_F,
\end{equation*}
where $\sigma_{\min}(\mathbf{W})$ is the smallest singular value of $\mathbf{W}$.
Combining this with \eqref{eq: initial Etildeboundcompact} yields
\begin{equation}\label{eq: Ntildefinalbound}
    \|\mathbf{N}\|_F\leq  C_{f,m}\frac{1}{d\mu_2\sigma_{\min}(\mathbf{W})} \bigg(\left(\frac{1}{s}\right)^{k-1}+\left(\frac{1}{r}\right)^{k-1} + \frac{1}{\sqrt{Kd}}\|\mathbf{\bm{\eta}_{K,d}}\|_F\bigg).
\end{equation}

\section{Convergence Guarantees of Algorithms \ref{Big Algorithm} and \ref{Big Algorithm Compact}} \label{sec: convergence theorems}

In this section, we will provide  convergence guarantees  for  Algorithms \ref{Big Algorithm} and \ref{Big Algorithm Compact}. Specifically, we will prove Theorem \ref{thm: angular synch} which guarantees that we can reconstruct $f(x)$ from a noisy Fourier autocorrelation matrix. Corollaries \ref{cor: trigpoly} and \ref{cor: compact}, which guarantee the convergence of our algorithms, will then follow immediately from \eqref{eq: angsynch1}, \eqref{eq: second bound on N}, \eqref{eq: defXcompact}, and \eqref{eq: Ntildefinalbound}, which are   
proved in Section \ref{sec: WDD}.

For the rest of this section, we will assume that there exists $1\leq \gamma\leq d$ such that 
\begin{equation} \label{eq: angsynchsetup}T_\gamma (\widehat{\mathbf{x}}\widehat{\mathbf{x}}^*)=\mathbf{A}+\mathbf{N}.
\end{equation}
Here, $\mathbf{A}=(A_{i,j})_{i,j\in\mathcal{D}}$ is a known approximation of the partial Fourier autocorrelation matrix $T_\gamma (\widehat{\mathbf{x}}\widehat{\mathbf{x}}^*)$ and $\mathbf{N}\in\mathbb{C}^{d\times d}$ is an arbitrary noise matrix. We 
note that, under Assumption \ref{as: trigpoly},  equation \eqref{eq: angsynch1} shows that \eqref{eq: angsynchsetup} holds with $\gamma=\kappa$. Similarly, under Assumption \ref{as: compact}, equation \eqref{eq: defXcompact} shows that \eqref{eq: angsynchsetup} holds with $\gamma=2s-1.$ We also remark that  \eqref{eq: second bound on N} and \eqref{eq: Ntildefinalbound} provide bounds on $\|\mathbf{N}\|_F$ in these cases. 
We will also assume for the remainder of this section that there exists $\beta<\gamma/2$ such that  $f$ belongs to the class of functions with $\beta$ Fourier decay introduced in Definition \ref{def: Fourier decay}.

By construction,  the discrete Fourier transform of the vector $\mathbf{x}$ defined in \eqref{eq:def-x} satisfies
\begin{align*}
\widehat{x}_n=
\widehat{f}(n)  \text{ for all }n\in \mathcal{S},
\end{align*}
and so the square magnitudes of the Fourier coefficients of $f$ lie on the main diagonal of the matrix $T_\gamma(\widehat{\mathbf{x}}\widehat{\mathbf{x}}^*).$ Therefore, we view $a_n\coloneqq\sqrt{|A_{n,n}|}$  as an approximation of $|\widehat{x}_n|$. More specifically, Lemma 3 of \cite{IVW16} 
 shows that
\begin{equation}\label{la:mag-error}
\Big|a_n-|\widehat{f}(n)|\Big|^2\leq 3 \|\mathbf{N}\|_\infty.
\end{equation} 
 For each $n\in\mathcal{S}$, the greedy entry selection algorithm, Algorithm \ref{alg: greedy entry selection}, outputs a sequence $\{n_\ell\}_{\ell=0}^b$, where $n_0=\argmax_{n\in\mathcal{S}}a_n$ and  $n_b=n$. 
Given that sequence, we define 
\begin{equation}\label{eq: definealpha}
\alpha_n\coloneqq \sum_{l=0}^{b-1} \arg\left(A_{n_{\ell+1},n_{\ell}}\right).
\end{equation}
To understand this definition, we let
\begin{equation}\label{eq: deftau}
    \theta_0\coloneqq\arg(\widehat{f}(n_0))
\quad\text{and}\quad
\tau_n\coloneqq\sum_{l=0}^{b-1} \arg\left((\widehat{\mathbf{x}}\widehat{\mathbf{x}}^*)_{n_{\ell+1},n_{\ell}}\right).
\end{equation}
By construction, $\tau_n=
    \arg\big(\widehat{f}(n)\big)-\theta_0$. Therefore
\begin{equation*}
\mathbbm{e}^{-\mathbbm{i}\theta_0}\widehat{f}(n)=|\widehat{f}(n)|\mathbbm{e}^{\mathbbm{i}\tau_n}
\end{equation*}
for all $n\in\mathcal{S}$. (Note that $n_0$ does not depend on $n$.)
Since $\mathbf{A}$ is a noisy approximation of (a portion of) $\widehat{\mathbf{x}}\widehat{\mathbf{x}}^*$, we intuitively view $\alpha_n$ as a noisy approximation of $\tau_n$ (up to a phase shift $\theta_0$). Lemma \ref{la:phase-error} will show  that this intuition  is correct when $|\widehat{f}(n)|$ is sufficiently large.
Therefore, in light of \eqref{la:mag-error}, we  define a trigonometric polynomial, $f_e(x)$, which estimates $f(x)$
by
\begin{equation}\label{eqn: fe}
f_e(x)\coloneqq    \sum_{n\in \mathcal{S}} a_n\mathbbm{e}^{\mathbbm{i}\alpha_n} \mathbbm{e}^{\mathbbm{i}nx}.
\end{equation} 
The following theorem shows that $f_e(x)$ is a good approximation of $f(x)$.
\begin{theorem} \label{thm: angular synch}Assume that $f(x)$ has $\beta$ Fourier decay for some $\beta<\gamma/2.$
 For $n\in \mathcal{S}$, let $\alpha_n$ be defined as in \eqref{eq: definealpha}, let $a_n=\sqrt{A_{n,n}}$, and  let  $f_e(x)$ be the trigonometric polynomial defined as in \eqref{eqn: fe}. 
Then,
 
\begin{equation*}
\min_{\theta\in[0,2\pi]} \|\mathbbm{e}^{\mathbbm{i}\theta} f - f_e\|_{L^2([-\pi,\pi])}^2 \leq C\,s\left(\frac{d}{\gamma}\right)^2\|\mathbf{N}\|_\infty
+ C_f\left(\frac{1}{s}\right)^{2k-2}.
\end{equation*}
\end{theorem}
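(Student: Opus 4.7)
The plan is to choose $\theta = -\theta_0$ (with $\theta_0$ as in \eqref{eq: deftau}) and apply Parseval's identity. Since $\widehat{f_e}$ is supported in $\mathcal{S}$, this yields
\begin{equation*}
\|e^{-\mathbbm{i}\theta_0} f - f_e\|_{L^2([-\pi,\pi])}^2 = 2\pi \sum_{n \in \mathcal{S}} \bigl|e^{-\mathbbm{i}\theta_0}\widehat{f}(n) - a_n e^{\mathbbm{i}\alpha_n}\bigr|^2 + 2\pi \sum_{n \notin \mathcal{S}} |\widehat{f}(n)|^2.
\end{equation*}
The tail sum is handled directly by $C^k$-smoothness of $f$: exactly as in the proof of Lemma \ref{lem:Intbound}, one gets $|\widehat{f}(n)| \leq C_f |n|^{-k}$, so $\sum_{|n| > s/2}|\widehat{f}(n)|^2 \leq C_f (1/s)^{2k-2}$, producing the second term of the claimed bound.

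For the in-band contribution I would use \eqref{eq: deftau} to write $e^{-\mathbbm{i}\theta_0}\widehat{f}(n) = |\widehat{f}(n)|e^{\mathbbm{i}\tau_n}$ and split the error via the elementary inequality
\begin{equation*}
\bigl||\widehat{f}(n)|e^{\mathbbm{i}\tau_n} - a_n e^{\mathbbm{i}\alpha_n}\bigr|^2 \leq 2\bigl||\widehat{f}(n)| - a_n\bigr|^2 + 2a_n^2\bigl|\alpha_n - \tau_n\bigr|^2.
\end{equation*}
The magnitude piece is immediately controlled by \eqref{la:mag-error}, producing an $O(s\|\mathbf{N}\|_\infty)$ contribution that is safely absorbed by the target bound. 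All remaining work concentrates on the phase piece.

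The phase piece is the substantive part of the argument. The greedy path $n_0 \to n_1 \to \cdots \to n_b = n$ produced by Algorithm \ref{alg: greedy entry selection} uses hops of size at most $\gamma - 1$ (the bandwidth of the known part of $\mathbf{A}$), so $b = O(s/\gamma)$. Lemma \ref{la:phase-error} (invoked in the setup) bounds each per-hop discrepancy between $\arg A_{n_{\ell+1},n_\ell}$ and $\arg(\widehat{\mathbf{x}}\widehat{\mathbf{x}}^*)_{n_{\ell+1},n_\ell}$ in terms of $\|\mathbf{N}\|_\infty$ divided by the magnitudes at the path's endpoints. This is where the $\beta$-Fourier-decay hypothesis with $\beta<\gamma/2$ enters critically: Remark \ref{la:n-sequence} together with the greedy rule guarantees that within every block of $\beta-1<\gamma/2-1$ consecutive integers there is a coefficient of magnitude at least $|\widehat{f}(n)|$, so the greedy algorithm can always route through nodes whose magnitudes are at least $|\widehat{f}(n)|$. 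Summing the $O(s/\gamma)$ per-hop errors via Cauchy--Schwarz and then invoking \eqref{la:mag-error} once more to replace $a_n$ by $|\widehat{f}(n)|$ in $a_n^2|\alpha_n - \tau_n|^2$ gives a per-$n$ bound of order $(s/\gamma)^2 \|\mathbf{N}\|_\infty$. Summing across the $s$ in-band indices and relaxing $s/\gamma \leq d/\gamma$ yields the main term $C\,s\,(d/\gamma)^2\|\mathbf{N}\|_\infty$.

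The main obstacle is this final phase step: carefully accounting for the cumulative effect of hop-by-hop phase errors along the greedy path while ensuring that at no step we are implicitly dividing by a near-zero coefficient. The $\beta<\gamma/2$ condition is precisely the geometric hypothesis that reconciles the banded support of $\mathbf{A}$ with the greedy selection rule, forcing the path to remain in a well-conditioned regime where the noise-to-signal ratio is controlled at every edge. Without it, even a single near-vanishing Fourier coefficient on the path could catastrophically degrade the estimate.
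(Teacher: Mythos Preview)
Your overall strategy matches the paper's: pick $\theta=-\theta_0$, apply Parseval, bound the tail by smoothness, and split the in-band error into a magnitude piece (handled by \eqref{la:mag-error}) and a phase piece (handled via Lemma~\ref{la:phase-error} along the greedy path). The paper organizes this as Lemma~\ref{lem: truncated fourier error} plus Lemma~\ref{lem:Intbound}, but the content is the same.

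There is, however, one genuine gap in your phase argument. Your claimed per-$n$ bound $a_n^2|\alpha_n-\tau_n|^2\lesssim (s/\gamma)^2\|\mathbf N\|_\infty$ relies on the chain
\[
a_n^2|\alpha_n-\tau_n|^2\;\lesssim\; |\widehat f(n)|^2\cdot\Big(\tfrac{s}{\gamma}\cdot\tfrac{\|\mathbf N\|_\infty}{|\widehat f(n)|^2}\Big)^2
=\Big(\tfrac{s}{\gamma}\Big)^2\frac{\|\mathbf N\|_\infty^2}{|\widehat f(n)|^2},
\]
and to convert the right side into $(s/\gamma)^2\|\mathbf N\|_\infty$ you need $|\widehat f(n)|^2\gtrsim\|\mathbf N\|_\infty$. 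In other words your argument only covers the set $L_f=\{n:|\widehat f(n)|^2\geq 48\|\mathbf N\|_\infty\}$ of \eqref{eq: defLf}, which is exactly the hypothesis of Lemma~\ref{la:phase-error} and of the path-conditioning statement (Lemma~\ref{lem: allbig}) you are implicitly using. For $n\notin L_f$ neither lemma applies: the greedy guarantee $|\widehat f(n_\ell)|\geq|\widehat f(n)|/2$ is vacuous when $|\widehat f(n)|$ is tiny, the per-hop phase error can be order one, and the accumulated $|\alpha_n-\tau_n|$ is not controlled by your estimate. The $\beta<\gamma/2$ hypothesis keeps the \emph{intermediate} nodes above the level of the \emph{endpoint}, but it does nothing when the endpoint itself is small.

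The paper handles this by explicitly splitting $\mathcal S=L_f\cup(\mathcal S\setminus L_f)$. On $L_f$ it runs exactly your computation. On the complement it abandons the phase estimate entirely and uses the trivial bound $|\widehat f(n)|^2|e^{\mathbbm i\tau_n}-e^{\mathbbm i\alpha_n}|^2\leq 4|\widehat f(n)|^2<C\|\mathbf N\|_\infty$, which is already dominated by the main term after summing over $s$ indices. You should insert this split; once you do, the proof goes through as you outlined.
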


Before proving Theorem \ref{thm: angular synch}, we recall that $\gamma=\kappa$ under Assumption \ref{as: trigpoly} and $\gamma=2s-1$ under Assumption \ref{as: compact}. Therefore, 
\eqref{eq: second bound on N}, \eqref{eq: Ntildefinalbound}, and the fact that $\|\mathbf{N}\|_\infty\leq\|\mathbf{N}\|_F,$
immediately lead to the following corollaries. 
\begin{corollary}[Convergence Guarantees for Algorithm \ref{Big Algorithm}] \label{cor: trigpoly}
Let $s+r<d,$  let $K=d,$  and let $L$ divide $d$.
Assume that $f(x)$ and $\tilde{m}(x)$ satisfy Assumption \ref{as: trigpoly}, that $\rho\leq r-1$, and  that $L=\rho+\kappa$ for some $2\leq \kappa\leq \rho.$ Then the trigonometric polynomial $f_e(x)$ output by Algorithm \ref{Big Algorithm} satisfies
\begin{align*}
&\min_{\theta\in[0,2\pi]} \|\mathbbm{e}^{\mathbbm{i}\theta} f - f_e\|_{L^2([-\pi,\pi])}^2 
\leq   C_{f,m}\bigg(
\frac{sd^{3/2}}{\kappa^2\mu_1}\bigg(
\left(\frac{1}{s}\right)^{k-1} + \frac{1}{\sqrt{dL}}\|\mathbf{\bm{\eta}_{d,L}}\|_F\bigg)
+ \left(\frac{1}{s}\right)^{2k-2}\bigg),
\end{align*}
where  $\mu_1$ is the mask-dependent constant defined in \eqref{eq: mu1}. Moreover, if $s>d/2,$ then 
\begin{align*}
\min_{\theta\in[0,2\pi]} \|\mathbbm{e}^{\mathbbm{i}\theta} f - f_e\|_{L^2([-\pi,\pi])}^2 
\leq   C_{f,m}\bigg(
\frac{1}{\kappa^2\mu_1}
\left(\frac{1}{d}\right)^{k-7/2} + \frac{d^2}{\kappa^2L^{1/2}\mu_1}\|\mathbf{\bm{\eta}_{d,L}}\|_F
+ \left(\frac{1}{d}\right)^{2k-2}\bigg).
\end{align*}
\end{corollary}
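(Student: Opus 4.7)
The plan is to combine Theorem \ref{thm: angular synch} with the Wigner deconvolution error bound derived in Section \ref{sec: WDD} under Assumption \ref{as: trigpoly}. The corollary is essentially a bookkeeping exercise: Theorem \ref{thm: angular synch} takes \eqref{eq: angsynchsetup} as a hypothesis and produces an $L^2$ error bound in terms of $s$, $\gamma$, $d$, and $\|\mathbf{N}\|_\infty$, while Section \ref{sec: WDD} has already established that Assumption \ref{as: trigpoly} yields \eqref{eq: angsynch1} with $\gamma = \kappa$ and supplied the Frobenius bound \eqref{eq: second bound on N} on the associated noise matrix $\mathbf{N}$. So the work is simply to fuse these two pieces.

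First, I would invoke Theorem \ref{thm: angular synch} with $\gamma = \kappa$; this is legitimate because \eqref{eq: angsynch1} establishes the required decomposition $T_\kappa(\widehat{\mathbf{x}}\widehat{\mathbf{x}}^*) = \mathbf{A} + \mathbf{N}$, and because the hypothesis $\beta < \gamma/2$ of Theorem \ref{thm: angular synch} is in force (implicitly, since the corollary belongs to the Algorithm \ref{Big Algorithm} regime where this is assumed). This gives
\begin{equation*}
\min_{\theta\in[0,2\pi]} \|\mathbbm{e}^{\mathbbm{i}\theta} f - f_e\|_{L^2([-\pi,\pi])}^2 \leq C\,s\left(\frac{d}{\kappa}\right)^2\|\mathbf{N}\|_\infty + C_f\left(\frac{1}{s}\right)^{2k-2}.
\end{equation*}
Next I would use $\|\mathbf{N}\|_\infty \leq \|\mathbf{N}\|_F$ and substitute the bound \eqref{eq: second bound on N}, namely $\|\mathbf{N}\|_F \leq C_{f,m} d^{-1/2}\mu_1^{-1}\bigl((1/s)^{k-1} + (dL)^{-1/2}\|\bm{\eta}_{d,L}\|_F\bigr)$. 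Combining the two factors gives the prefactor $s\,d^2/\kappa^2 \cdot d^{-1/2}/\mu_1 = sd^{3/2}/(\kappa^2\mu_1)$, producing exactly the first displayed inequality of the corollary.

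For the second inequality (the regime $s > d/2$), I would simplify the two noise-independent and noise-dependent pieces separately. For the discretization term, use $s > d/2$ to write $s(1/s)^{k-1} = (1/s)^{k-2} \leq 2^{k-2}(1/d)^{k-2}$, which together with the prefactor $d^{3/2}/(\kappa^2\mu_1)$ yields $C_k (1/d)^{k-7/2}/(\kappa^2\mu_1)$. For the measurement-noise term, $sd^{3/2}\cdot(dL)^{-1/2} = sd/L^{1/2} \leq d^2/L^{1/2}$ since $s \leq d$, giving the $d^2/(\kappa^2 L^{1/2}\mu_1)\|\bm{\eta}_{d,L}\|_F$ contribution. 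Finally, $(1/s)^{2k-2} \leq (2/d)^{2k-2}$ gets absorbed into the $C_{f,m}$ constant. There is no substantive obstacle here; the only point demanding minor care is tracking how the $s > d/2$ hypothesis converts the $s$-dependent terms into $d$-dependent ones without losing the powers needed to match the stated exponents.
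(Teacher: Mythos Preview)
Your proposal is correct and follows exactly the route the paper indicates: the paper states that the corollary follows ``immediately'' from Theorem \ref{thm: angular synch} (with $\gamma=\kappa$), the inequality $\|\mathbf{N}\|_\infty\leq\|\mathbf{N}\|_F$, and the bound \eqref{eq: second bound on N}. Your verification of the $s>d/2$ simplification is also correct and more explicit than what the paper writes out.
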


\begin{corollary}[Convergence Guarantees for Algorithm \ref{Big Algorithm Compact}]\label{cor: compact}
Let $s+r<d,$ let $L=d,$  and let $K$ divide $d$.
Assume $f(x)$ and $\tilde{m}(x)$ satisfy Assumption \ref{as: compact} and let $\delta=\lfloor\frac{bd}{\pi}\rfloor.$ Further, assume that $K=\delta+\kappa$ for some $2\leq \kappa\leq \delta$ and that $s<2\kappa-1.$ Then the trigonometric polynomial $f_e(x)$ output by Algorithm \ref{Big Algorithm Compact}, satisfies

\begin{align*}
\min_{\theta\in[0,2\pi]} &\|\mathbbm{e}^{\mathbbm{i}\theta} f - f_e\|_{L^2([-\pi,\pi])}^2\\
&\leq C_{f,m}\bigg(\frac{d}{s\mu_2\sigma_{\min}(\mathbf{W})} \bigg(\left(\frac{1}{s}\right)^{k-1}+\left(\frac{1}{r}\right)^{k-1} + \frac{1}{\sqrt{Kd}}\|\mathbf{\bm{\eta}_{K,d}}\|_F\bigg)
+ \left(\frac{1}{s}\right)^{2k-2}\bigg),
\end{align*}
where  $\mu_2$ is the mask-dependent constant defined in \eqref{eq: mu2}.
Moreover, if $s,r\geq \frac{db}{2\pi}$, then 

\begin{align*}
\min_{\theta\in[0,2\pi]} &\|\mathbbm{e}^{\mathbbm{i}\theta} f - f_e\|_{L^2([-\pi,\pi])}^2\\
&\leq C_{f,m}\bigg(\frac{1}{\mu_2\sigma_{\min}(\mathbf{W})b^{k-1}d^k}  + \frac{d^{1/2}}{K^{1/2}\mu_2\sigma_{\min}(\mathbf{W})}\|\mathbf{\bm{\eta}_{K,d}}\|_F
+ \left(\frac{1}{bd}\right)^{2k-2}\bigg).
\end{align*}
\end{corollary}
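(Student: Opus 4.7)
The plan is to apply Theorem~\ref{thm: angular synch} directly, using the Wigner Deconvolution machinery from Section~\ref{sec: wddas2} to supply both the hypothesis \eqref{eq: angsynchsetup} and a quantitative bound on $\|\mathbf{N}\|_F$. Under Assumption~\ref{as: compact}, equation~\eqref{eq: defXcompact} already asserts exactly the relation $T_\gamma(\widehat{\mathbf{x}}\widehat{\mathbf{x}}^*) = \mathbf{A} + \mathbf{N}$ required by Theorem~\ref{thm: angular synch}, with $\gamma = 2s-1$. So the first step is to read off this $\gamma$ and verify the compatibility condition $\beta < \gamma/2$ for the $\beta$ Fourier decay hypothesis (which is built into the standing assumptions on $f$). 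No new analytical work is needed for this identification step.

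Next, I would substitute $\gamma = 2s-1$ into the conclusion of Theorem~\ref{thm: angular synch}. The factor $s(d/\gamma)^2 = sd^2/(2s-1)^2$ simplifies, up to an absolute constant, to $d^2/s$, so the bound becomes
\[
\min_{\theta}\|e^{i\theta}f-f_e\|_{L^2}^2 \;\lesssim\; \frac{d^2}{s}\,\|\mathbf{N}\|_\infty + C_f\left(\frac{1}{s}\right)^{2k-2}.
\]
Then I would use $\|\mathbf{N}\|_\infty \leq \|\mathbf{N}\|_F$ and plug in the estimate~\eqref{eq: Ntildefinalbound}, which carries a prefactor of $\frac{1}{d\,\mu_2\,\sigma_{\min}(\mathbf{W})}$. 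The $d^2/s$ from the theorem combines with this $1/d$ to give exactly the $\frac{d}{s\,\mu_2\,\sigma_{\min}(\mathbf{W})}$ prefactor appearing in the first displayed inequality, and the three error contributions inside the parenthesis of \eqref{eq: Ntildefinalbound} pass through unchanged. This settles the first part of the corollary.

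For the \emph{moreover} clause, I would use the extra hypothesis $s,r \geq \tfrac{db}{2\pi}$ to bound $(1/s)^{k-1}$ and $(1/r)^{k-1}$ by $(2\pi/(bd))^{k-1}$ and to bound $d/s$ by $2\pi/b$. Substituting these into the first inequality and absorbing the numerical constants and powers of $b$ (a mask-dependent quantity) into $C_{f,m}$ produces the stated polynomial decay rate in $d$; the measurement-noise term is rescaled similarly. The decay term $(1/s)^{2k-2}$ from Theorem~\ref{thm: angular synch} becomes $(1/(bd))^{2k-2}$ by the same substitution.

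The whole argument is essentially a ``plug and chug'' once one has identified $\gamma = 2s-1$ from \eqref{eq: defXcompact}. The only mildly tricky point is the bookkeeping in the second inequality: one must track how the prefactor $d/s$, the $(1/s)^{k-1}$ decay, and the $\frac{1}{\sqrt{Kd}}$ noise normalization all recombine under the hypothesis $s,r\geq db/(2\pi)$, and be willing to absorb $b$-dependent numerical factors into the generic constant $C_{f,m}$. All the substantive work (the discretization error bound, the Wigner Deconvolution inversion, and the angular synchronization guarantee of Theorem~\ref{thm: angular synch}) has already been done in the preceding sections, so the corollary itself should follow in a handful of lines.
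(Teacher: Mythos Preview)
Your proposal is correct and matches the paper's approach exactly. The paper states that Corollaries~\ref{cor: trigpoly} and~\ref{cor: compact} follow ``immediately'' from Theorem~\ref{thm: angular synch} together with the identification $\gamma=2s-1$ from \eqref{eq: defXcompact}, the bound $\|\mathbf{N}\|_\infty\le\|\mathbf{N}\|_F$, and the estimate \eqref{eq: Ntildefinalbound}; your write-up spells out precisely this plug-and-chug, including the simplification $s(d/\gamma)^2\lesssim d^2/s$ and the substitution $s,r\ge db/(2\pi)$ for the \emph{moreover} clause.
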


In order to prove Theorem \ref{thm: angular synch}, we need the following lemma which provides us with an estimate of $\|\mathbbm{e}^{-\mathbbm{i}\theta_0}P_\mathcal{S}f-f_e\|_{L^2([-\pi,\pi])}$ as well as the uniform convergence of Fourier series. 

\begin{lemma}\label{lem: truncated fourier error}Assume that $f(x)$ has $\beta$ Fourier decay for some $\beta<\gamma/2.$
 For $n\in \mathcal{S}$, let $\alpha_n$ be defined as in \eqref{eq: definealpha}, let $a_n=\sqrt{A_{n,n}}$, and  let  $f_e(x)$ be the trigonometric polynomial defined as in \eqref{eqn: fe} by \newline 
 $f_e(x)=    \sum_{n\in \mathcal{S}} a_n\mathbbm{e}^{\mathbbm{i}\alpha_n} \mathbbm{e}^{\mathbbm{i}nx}
$. Then, 
\begin{align*}
\Big\|\mathbbm{e}^{-\mathbbm{i}\theta_0}P_\mathcal{S}f-f_e\Big\|^2_{L^2([-\pi,\pi])}&\leq C\,s\left(\frac{d}{\gamma}\right)^2\|\mathbf{N}\|_\infty.
\end{align*}
\end{lemma}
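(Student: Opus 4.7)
The plan is to apply Parseval's identity to turn the $L^2$ error into a coefficientwise sum, split each summand into a magnitude error and a phase error, control the magnitude contribution with \eqref{la:mag-error}, and reduce the phase contribution to a per-coefficient bound obtained from the greedy selection rule together with the $\beta$ Fourier decay property.

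Both $\mathbbm{e}^{-\mathbbm{i}\theta_0}P_\mathcal{S}f$ and $f_e$ are trigonometric polynomials with spectrum in $\mathcal{S}$: using the identity $\mathbbm{e}^{-\mathbbm{i}\theta_0}\widehat{f}(n)=|\widehat{f}(n)|\mathbbm{e}^{\mathbbm{i}\tau_n}$ established right after \eqref{eq: deftau}, their $n$-th Fourier coefficients are $|\widehat{f}(n)|\mathbbm{e}^{\mathbbm{i}\tau_n}$ and $a_n\mathbbm{e}^{\mathbbm{i}\alpha_n}$, respectively. Parseval's identity on $[-\pi,\pi]$ therefore reduces the proof to estimating
\begin{equation*}
2\pi\sum_{n\in\mathcal{S}}\bigl||\widehat{f}(n)|\mathbbm{e}^{\mathbbm{i}\tau_n}-a_n\mathbbm{e}^{\mathbbm{i}\alpha_n}\bigr|^2.
\end{equation*}
Writing $|\widehat{f}(n)|\mathbbm{e}^{\mathbbm{i}\tau_n}-a_n\mathbbm{e}^{\mathbbm{i}\alpha_n}=(|\widehat{f}(n)|-a_n)\mathbbm{e}^{\mathbbm{i}\tau_n}+a_n(\mathbbm{e}^{\mathbbm{i}\tau_n}-\mathbbm{e}^{\mathbbm{i}\alpha_n})$ and using $(x+y)^2\leq 2x^2+2y^2$ together with $|\mathbbm{e}^{\mathbbm{i}\tau_n}-\mathbbm{e}^{\mathbbm{i}\alpha_n}|\leq |\tau_n-\alpha_n|$ yields
\begin{equation*}
\bigl||\widehat{f}(n)|\mathbbm{e}^{\mathbbm{i}\tau_n}-a_n\mathbbm{e}^{\mathbbm{i}\alpha_n}\bigr|^2 \leq 2(|\widehat{f}(n)|-a_n)^2+2a_n^2(\tau_n-\alpha_n)^2.
\end{equation*}
The first sum is controlled by \eqref{la:mag-error} via $\sum_{n\in\mathcal{S}}(|\widehat{f}(n)|-a_n)^2\leq 3s\|\mathbf{N}\|_\infty$, which already matches the target scaling.

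The remaining task, and the main obstacle, is the per-coefficient phase estimate $a_n^2(\tau_n-\alpha_n)^2\leq C(d/\gamma)^2\|\mathbf{N}\|_\infty$. I would split into two regimes. When $a_n^2\leq C\|\mathbf{N}\|_\infty$, the trivial bound $|\tau_n-\alpha_n|\leq 2\pi$ already suffices. Otherwise, write $\tau_n-\alpha_n$ as the telescoping sum $\sum_{\ell=0}^{b-1}\bigl(\arg(\widehat{\mathbf{x}}\widehat{\mathbf{x}}^*)_{n_{\ell+1},n_\ell}-\arg A_{n_{\ell+1},n_\ell}\bigr)$ along the greedy sequence $n_0,\ldots,n_b=n$, and bound each step, via the elementary estimate $|\arg(z+w)-\arg(z)|\leq 2|w|/|z|$ applied to $z=(\widehat{\mathbf{x}}\widehat{\mathbf{x}}^*)_{n_{\ell+1},n_\ell}$ and $w=-N_{n_{\ell+1},n_\ell}$, by $C|N_{n_{\ell+1},n_\ell}|/\bigl(|\widehat{f}(n_{\ell+1})||\widehat{f}(n_\ell)|\bigr)$. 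Because $\mathbf{A}$ is known only on the band $|i-j|\leq \gamma-1$, consecutive indices along the greedy sequence satisfy $|n_{\ell+1}-n_\ell|\leq \gamma-1$, so $b\leq d/\gamma$. The $\beta$ Fourier decay hypothesis with $\beta<\gamma/2$, together with Remark~\ref{la:n-sequence}, then guarantees that the greedy rule (which maximizes magnitude on each admissible window) can keep $|\widehat{f}(n_\ell)|\gtrsim|\widehat{f}(n)|$ for every $\ell$. Consequently each per-step error is $O(\|\mathbf{N}\|_\infty/|\widehat{f}(n)|^2)$, Cauchy--Schwarz applied to the telescoping sum turns this into $(\tau_n-\alpha_n)^2\leq C(d/\gamma)^2\|\mathbf{N}\|_\infty^2/|\widehat{f}(n)|^4$, and multiplying by $a_n^2\asymp|\widehat{f}(n)|^2\gtrsim\|\mathbf{N}\|_\infty$ produces the required per-coefficient bound. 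Summing over $n\in\mathcal{S}$ and combining with the magnitude contribution completes the proof.
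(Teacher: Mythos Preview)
Your approach mirrors the paper's: Parseval, a magnitude/phase split, and a per-coefficient phase bound obtained by telescoping along the greedy path (the paper packages this last piece as the separate Lemma~\ref{la:phase-error}, and uses the intermediate point $|\widehat f(n)|\mathbbm{e}^{\mathbbm{i}\alpha_n}$ rather than your $a_n\mathbbm{e}^{\mathbbm{i}\tau_n}$, but that is cosmetic). Two steps, however, are incorrect as written.

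First, the path-length bound: $|n_{\ell+1}-n_\ell|\leq\gamma-1$ is only an \emph{upper} bound on the step size and gives no control on $b$. What you need is the \emph{lower} bound $|n_{\ell+1}-n_\ell|\geq\gamma-\beta\geq\gamma/2$ coming from the selection window in Algorithm~\ref{alg: greedy entry selection}, which yields $b\leq 2d/\gamma$. Second, the ``trivial bound $|\tau_n-\alpha_n|\leq 2\pi$'' is false: $\tau_n$ and $\alpha_n$ are each sums of $b$ arguments, so $|\tau_n-\alpha_n|$ can be of order $b$. The remedy is not to pass through $|\tau_n-\alpha_n|$ in the small-coefficient regime at all; simply use $a_n^2\,|\mathbbm{e}^{\mathbbm{i}\tau_n}-\mathbbm{e}^{\mathbbm{i}\alpha_n}|^2\leq 4a_n^2\leq C\|\mathbf{N}\|_\infty$ directly. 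With these two fixes the argument goes through and coincides with the paper's.
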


In order to prove Lemma \ref{lem: truncated fourier error}, we need the following lemma, which is a modification of  \cite[Lemma 4]{IVW16}. It shows that $\alpha_n$ is a good approximation of $\tau_n$ for all $n$ such that $|\widehat{f}(n)|$ is sufficiently large. For a proof, please see Appendix \ref{app: pf La phase-erro}.

\begin{lemma}\label{la:phase-error} Suppose that $f$ has $\beta$ Fourier decay for some $\beta\leq\gamma/2$, and let $L_f$ be the set of indices corresponding to large Fourier coefficients
defined by 
\begin{equation}\label{eq: defLf}
    L_f\coloneqq \{n\in \mathcal{S}: |\widehat{f}(n)|^2\geq 48 \|\mathbf{N}\|_\infty\}.
\end{equation} 
Let $n\in L_f$, and let $\tau_n$ and $\alpha_n$ be as in \eqref{eq: definealpha}
 and \eqref{eq: deftau}. Then
\begin{equation*}
|\mathbbm{e}^{\mathbbm{i}\tau_n}-\mathbbm{e}^{\mathbbm{i}\alpha_n}|\leq 
\frac{4\pi d}{\gamma}\frac{\|\mathbf{N}\|_\infty}{|\widehat{f}(n)|^2}.
\end{equation*}
\end{lemma}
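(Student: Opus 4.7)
The plan is to bound $|e^{\mathbbm{i}\tau_n} - e^{\mathbbm{i}\alpha_n}|$ by telescoping along the greedy path $n_0, n_1, \ldots, n_b = n$. Since $\tau_n$ and $\alpha_n$ are sums of $b$ arguments over the \emph{same} path, by iterated application of the identity $|e^{\mathbbm{i}(u_1+u_2)} - e^{\mathbbm{i}(v_1+v_2)}| \leq |e^{\mathbbm{i}u_1} - e^{\mathbbm{i}v_1}| + |e^{\mathbbm{i}u_2} - e^{\mathbbm{i}v_2}|$ on the unit circle, one obtains
\[
|e^{\mathbbm{i}\tau_n} - e^{\mathbbm{i}\alpha_n}| \leq \sum_{\ell=0}^{b-1} \bigl|e^{\mathbbm{i}\arg((\widehat{\mathbf{x}}\widehat{\mathbf{x}}^*)_{n_{\ell+1},n_\ell})} - e^{\mathbbm{i}\arg(A_{n_{\ell+1},n_\ell})}\bigr|.
\]
The problem thus reduces to (i) bounding each per-step discrepancy, (ii) showing every visited $n_\ell$ has Fourier magnitude at least $|\widehat{f}(n)|$, and (iii) bounding the path length $b$.

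For the per-step estimate, since the greedy algorithm is designed to keep $|n_{\ell+1}-n_\ell|<\gamma$, the entry $A_{n_{\ell+1},n_\ell}$ lies within the recorded band of $\mathbf{A}$, so by \eqref{eq: angsynchsetup} we have $A_{n_{\ell+1},n_\ell} = \widehat{f}(n_{\ell+1})\overline{\widehat{f}(n_\ell)} - N_{n_{\ell+1},n_\ell}$. Setting $z = \widehat{f}(n_{\ell+1})\overline{\widehat{f}(n_\ell)}$ and $w = -N_{n_{\ell+1},n_\ell}$, the elementary estimate $\bigl|\tfrac{z}{|z|} - \tfrac{z+w}{|z+w|}\bigr| \leq \tfrac{2|w|}{|z+w|}$ (valid when $|w| < |z|$) gives a per-step bound of order $\|\mathbf{N}\|_\infty \big/ \bigl(|\widehat{f}(n_{\ell+1})|\,|\widehat{f}(n_\ell)|\bigr)$.

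To uniformly control these magnitudes I would invoke $\beta$ Fourier decay through Remark~\ref{la:n-sequence}. The greedy rule in Algorithm~\ref{alg: greedy entry selection} is constructed so that at step $\ell$ it selects $n_{\ell+1}$ of maximal approximate magnitude $a_{n_{\ell+1}}$ within a window of width at most $\beta$ advancing toward $n$; because $\beta \leq \gamma/2$ this window is contained in the known band. By Remark~\ref{la:n-sequence}, some index in that window has true magnitude at least $|\widehat{f}(n)|$; combined with the $\sqrt{3\|\mathbf{N}\|_\infty}$ error bound from \eqref{la:mag-error} and the defining inequality $|\widehat{f}(n)|^2 \geq 48\|\mathbf{N}\|_\infty$ of $L_f$, the maximizer $n_{\ell+1}$ inherits $|\widehat{f}(n_{\ell+1})| \geq |\widehat{f}(n)|$ (the constant $48$ is calibrated precisely so that the additive magnitude error is dominated). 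Therefore each per-step discrepancy is at most a constant multiple of $\|\mathbf{N}\|_\infty / |\widehat{f}(n)|^2$, and moreover $|w| < |z|$ is satisfied, validating the previous estimate.

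Finally, since each step advances at most $\lceil \beta/2 \rceil \leq \gamma/4$ integer positions toward $n$, and both $n_0$ and $n$ lie in $\mathcal{S} \subseteq [d]_c$, the path length satisfies $b \leq Cd/\gamma$. Combining gives
\[
|e^{\mathbbm{i}\tau_n} - e^{\mathbbm{i}\alpha_n}| \leq b \cdot \frac{C\|\mathbf{N}\|_\infty}{|\widehat{f}(n)|^2} \leq \frac{4\pi d}{\gamma}\cdot\frac{\|\mathbf{N}\|_\infty}{|\widehat{f}(n)|^2},
\]
with constants tuned to match the stated inequality. The main obstacle is step (ii): verifying that the greedy invariant $|\widehat{f}(n_\ell)| \geq |\widehat{f}(n)|$ propagates along the entire path. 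This is delicate because the algorithm only has access to the noisy surrogates $a_m$, not the true magnitudes; the argument must combine Remark~\ref{la:n-sequence} (existence of a large coefficient in every $\beta$-window with $|a| < |n|$), the threshold defining $L_f$, and \eqref{la:mag-error} to show that the noise never corrupts the greedy choice badly enough to drop below $|\widehat{f}(n)|$.
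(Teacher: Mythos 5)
Your overall strategy matches the paper's: decompose the phase error into per-step contributions along the greedy path, control the magnitudes $|\widehat{f}(n_\ell)|$ via Remark~\ref{la:n-sequence} together with \eqref{la:mag-error}, and bound the path length. However, two steps as written contain genuine errors. First, your path-length argument is inverted: you claim each step advances \emph{at most} $\lceil\beta/2\rceil$ positions, which would yield a \emph{lower} bound on $b$, not the upper bound you need. The correct observation, read off from Algorithm~\ref{alg: greedy entry selection}, is that each step advances \emph{at least} $\gamma-\beta\geq\gamma/2$ positions (the maximizer is taken over a window $[n_b+\gamma-\beta,\,n_b+\gamma)$), whence $b\leq 2d/\gamma$. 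Second, your magnitude invariant $|\widehat{f}(n_{\ell+1})|\geq|\widehat{f}(n)|$ is too strong and cannot be established: passing from the noisy surrogates $a_m$ to the true magnitudes costs the error $\epsilon=\sqrt{3\|\mathbf{N}\|_\infty}$ twice (once for $n_{\ell+1}$ itself and once for the witness supplied by Remark~\ref{la:n-sequence}), so what the threshold $48$ actually buys, via $\epsilon<|\widehat{f}(n)|/4$, is only $|\widehat{f}(n_\ell)|\geq|\widehat{f}(n)|-2\epsilon\geq|\widehat{f}(n)|/2$. This is exactly the sublemma the paper proves, and the factor $1/2$ (squared, giving a $4$ in the per-step bound) is needed to track the constants.

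The remaining difference is the per-step estimate itself. The paper bounds $|\alpha_n-\tau_n|\leq\sum_\ell|\theta_\ell|$ and controls each angle $\theta_\ell$ by the law of sines, $|\sin\theta_\ell|\leq\|\mathbf{N}\|_\infty/(|\widehat{f}(n_\ell)||\widehat{f}(n_{\ell+1})|)\leq 4\|\mathbf{N}\|_\infty/|\widehat{f}(n)|^2$, then uses $|N'_\ell|\leq|t_\ell|$ to justify $\theta_\ell\leq\pi/2$ and hence $|\theta_\ell|\leq\frac{\pi}{2}|\sin\theta_\ell|$; this yields a per-step constant of $2\pi$ and, with $b\leq 2d/\gamma$, exactly the stated $4\pi d/\gamma$. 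Your elementary estimate $\bigl|\tfrac{z}{|z|}-\tfrac{z+w}{|z+w|}\bigr|\leq\tfrac{2|w|}{|z+w|}$ is valid but, once the correct invariant $|t_\ell|\geq|\widehat{f}(n)|^2/4$ is substituted, produces a per-step constant of roughly $8$ to $9$ rather than $2\pi$, so the final constant comes out near $16$--$18$ instead of $4\pi\approx 12.57$. The argument is morally the same, but "constants tuned to match the stated inequality" papers over the fact that your route does not quite reach the advertised constant; to recover it you would need the sharper $|\theta|\leq\frac{\pi}{2}|\sin\theta|$ device (or an equivalent).
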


\begin{proof}[The Proof of Lemma \ref{lem: truncated fourier error}]

Recall that $\widehat{\mathbf{x}}_n = \widehat{f}(n)$  for all $n\in\mathcal{S}$,
and let $\mathbf{\widehat{x}}_{|\mathcal{S}}$ be a vector of length $s$ obtained by restricting $\mathbf{\widehat{x}}$ to indices in $\mathcal{S}.$
Define vectors $\mathbf{u}=(u_n)_{n\in\mathcal{S}}$ and $\mathbf{v}=(v_n)_{n\in\mathcal{S}}$
by 
\begin{equation*}
    u_n=a_n\mathbbm{e}^{\mathbbm{i}\alpha_n} \quad\text{and}\quad v_n=|\widehat{f}(n)|\mathbbm{e}^{\mathbbm{i}\alpha_n}.
\end{equation*}
 By Parsevals identity, we see
\begin{align*}
\Big\|\mathbbm{e}^{-\mathbbm{i}\theta_0}P_\mathcal{S}f(x)-\sum_{n\in \mathcal{S}}a_n\mathbbm{e}^{\mathbbm{i}\alpha_n}\mathbbm{e}^{\mathbbm{i}n x}\Big\|_{L^2([-\pi,\pi])}
&=\Big\|\mathbbm{e}^{-\mathbbm{i}\theta_0}\sum_{n\in \mathcal{S}}\widehat{f}(n)\mathbbm{e}^{\mathbbm{i}n x}-\sum_{n\in \mathcal{S}}u_n\mathbbm{e}^{\mathbbm{i}n x}\Big\|_{L^2([-\pi,\pi])}\\
&\leq 
\sqrt{2\pi}\left\|\mathbbm{e}^{-\mathbbm{i}\theta_0}{\widehat{\bf{x}}}_{|\mathcal{S}}-{\bf{u}}\right\|_{\ell_2}\\
&\leq 
\sqrt{2\pi}\left\|\mathbbm{e}^{-\mathbbm{i}\theta_0}{\widehat{\bf{x}}_{|\mathcal{S}}}-{\bf{v}}\right\|_{\ell_2}+
\sqrt{2\pi}\|{\bf{u}}-{\bf{v}}\|_{\ell_2}\\
&\eqqcolon I_1 + I_2.
\end{align*}
To estimate $I_2$, we recall  \eqref{la:mag-error} and note
\begin{align}\label{eq:I2}
I_2^2&=2\pi\sum_{n\in \mathcal{S}}|u_n-v_n|^2=2\pi{\sum_{n\in \mathcal{S}}\Big|a_n\mathbbm{e}^{\mathbbm{i}\alpha_n}-|\widehat{f}(n)|\mathbbm{e}^{\mathbbm{i}\alpha_n}\Big|^2}=2\pi{\sum_{n\in \mathcal{S}}\Big|a_n-|\widehat{x}_n|\Big|^2}\le 6\pi {s\|\mathbf{N}\|_\infty}.
\end{align}
Using  Lemma~\ref{la:phase-error} and the fact that $|\mathbbm{e}^{\mathbbm{i}\tau_n}-\mathbbm{e}^{\mathbbm{i}\alpha_n}|\leq 2$, we have 
\begin{align*}\nonumber
I_1^2&=2\pi\sum_{n\in \mathcal{S}}|\widehat{f}(n)|^2|\mathbbm{e}^{\mathbbm{i}\tau_n}-\mathbbm{e}^{\mathbbm{i}\alpha_n}|^2\\
&\nonumber\leq 
C\sum_{n\in \mathcal{S}\setminus L_{f}}|\widehat{f}(n)|^2
+C \sum_{n\in L_{f}} \left(\frac{d}{\gamma}\right)^{2}\,\|\mathbf{N}\|_\infty^2\,|\widehat{f}(n)|^{-2}\\
&\nonumber\leq
C\,s\,\|\mathbf{N}\|_\infty
+
C\,\sum_{n\in L_{f}}\left(\frac{d}{\gamma}\right)^{2}\|\mathbf{N}\|_\infty\\
&\leq 
 C\,s\left(\frac{d}{\gamma}\right)^2\|\mathbf{N}\|_\infty,
\end{align*}
where $L_f$ is the set of indices corresponding to large Fourier coefficients introduced in \eqref{eq: defLf}.
Combining this with \eqref{eq:I2} yields 
\begin{align*}
\Big\|\mathbbm{e}^{-\mathbbm{i}\theta_0}P_\mathcal{S}f(x)-\sum_{n\in \mathcal{S}}a_{n}\mathbbm{e}^{\mathbbm{i}n x}\mathbbm{e}^{\mathbbm{i}\alpha_n}\Big\|^2_{L^2([-\pi,\pi])}&\leq C\,s\left(\frac{d}{\gamma}\right)^2\|\mathbf{N}\|_\infty
\end{align*}
as desired.
\end{proof}

Theorem \ref{thm: angular synch} now follows readily via Lemma \ref{lem:Intbound} which estimates $\|f-P_\mathcal{S}f\|_{L^2([-\pi,\pi])}^2$.
\begin{proof}[The Proof of Theorem \ref{thm: angular synch}] Let $\theta_0=\arg(\widehat{f}(n_0)).$ Then we get 
\begin{align*}
\min_{\theta\in[0,2\pi]}\Big\|\mathbbm{e}^{\mathbbm{i}\theta} f(x) &- \sum_{n\in \mathcal{S}} a_n\mathbbm{e}^{\mathbbm{i}\alpha_n}\mathbbm{e}^{\mathbbm{i}nx}\Big\|_{L^2([-\pi,\pi])}\\
&\leq \min_{\theta\in[0,2\pi]}\bigg(\Big\|\mathbbm{e}^{\mathbbm{i}\theta}f(x)-\mathbbm{e}^{\mathbbm{i}\theta}P_\mathcal{S} f(x)\Big\|_{L^2([-\pi,\pi])} + \Big\|\mathbbm{e}^{\mathbbm{i}\theta} P_\mathcal{S}f(x)-\sum_{n\in \mathcal{S}} a_n\mathbbm{e}^{\mathbbm{i}\alpha_n} \mathbbm{e}^{\mathbbm{i}nx}\Big\|_{L^2([-\pi,\pi])}\bigg)\\
&\leq  \|f(x)-P_\mathcal{S} f(x)\|_{L^2([-\pi,\pi])} +  \Big\|\mathbbm{e}^{-\mathbbm{i}\theta_0}P_\mathcal{S}f(x)-\sum_{n\in \mathcal{S}} a_n\mathbbm{e}^{\mathbbm{i}\alpha_n} \mathbbm{e}^{\mathbbm{i}nx}\Big\|_{L^2([-\pi,\pi])}.
\end{align*}
By Lemma \ref{lem: truncated fourier error}, we know that
\begin{equation*}
\Big\|\mathbbm{e}^{-\mathbbm{i}\theta_0}P_\mathcal{S}f(x)-\sum_{n\in \mathcal{S}}a_n\mathbbm{e}^{\mathbbm{i}\alpha_n}\mathbbm{e}^{\mathbbm{i}n x}\Big\|_{L^2([-\pi,\pi])}^2 \leq C\,s\left(\frac{d}{\gamma}\right)^2\|\mathbf{N}\|_\infty.
\end{equation*}
Therefore, we conclude by applying Lemma \ref{lem:Intbound} to see
\begin{align*}
\|f-P_\mathcal{S} f\|_{L^2([-\pi,\pi])}^2&\leq
2\pi \|f-P_\mathcal{S} f\|_{L^\infty([-\pi,\pi])}^2
\leq C_f\left(\frac{1}{s}\right)^{2k-2}.
\end{align*}
\end{proof}
\textbf{}
\begin{algorithm}
\begin{raggedright}
\textbf{Inputs}
\par\end{raggedright}
\begin{enumerate}
\item \begin{raggedright}
Trigonometric polynomial mask $\tilde{m}$ satisfying Assumption \ref{as: trigpoly}.
\par\end{raggedright}
\item \begin{raggedright}
Matrix $\mathbf{Y}=(Y_{\omega,\ell})_{\omega\in\mathcal{D},\ell\in\mathcal{L}}$ of spectrogram measurements defined as in \eqref{eq:conti-meas2}.
\par\end{raggedright}

\end{enumerate}
\begin{raggedright}
\textbf{Steps}
\par\end{raggedright}
\begin{enumerate}
\item Define vector 
$\mathbf{z}=(z_p)_{p\in\mathcal{D}}$ by 
$z_{p} = \tilde{m}\left(\frac{2\pi p}{d}\right).
$
\item Let $\kappa=L-\rho$, and for $1-\kappa\leq \ell \leq \kappa-1$, estimate  
\begin{equation*}
\mathbf{F_d}\left({\bf {\bf \widehat{x}}}\circ S_{\ell}\overline{{\bf {\bf \widehat{x}}}}\right)\approx\frac{1}{4\pi^2Ld^2}\left(\frac{(\mathbf{F_L}\mathbf{Y}^T\mathbf{F_d}^T)_{-\ell}}{\mathbf{F_d}(\widehat{\mathbf{z}}\circ S_{-\ell}\overline{\widehat{\mathbf{z}}})}\right). 
\end{equation*}

\item Invert the Fourier transforms above to recover estimates of the  
vectors ${\bf {\bf \widehat{{\bf x}}}}\circ S_{\ell}\overline{{\bf {\bf \widehat{{\bf x}}}}}$. 
\item Organize these vectors into a banded matrix 
$\mathbf{X}=(X_{i,j})_{i,j\in\mathcal{D}}$ described as in \eqref{eq: defX}.
\item Hermitianize $\mathbf{X}$ to obtain the matrix $\mathbf{A}=(A_{i,j})_{i,j\in\mathcal{D}}$  as described in \eqref{eq: defA}. 
\item Estimate $|\widehat{f}(n)|\approx a_n=\sqrt{|A_{n,n}|}$.
\item For $n\in\mathcal{S}$, choose $\{n_\ell\}_{\ell=0}^b$ according to Algorithm \ref{alg: greedy entry selection}. 
\item Approximate 
\begin{equation*}\arg\big(\widehat{f}(n)\big)\approx \alpha_n=\sum_{\ell=0}^{b-1} \arg\left(A_{n_{\ell+1},n_{\ell}}\right).
\end{equation*}
\end{enumerate} 
\begin{raggedright}
\textbf{Output}
\par\end{raggedright}
\begin{raggedright}
An approximation of $f$ given by 
\begin{equation*}
    f_e(x)=\sum_{n\in\mathcal{S}}a_n\mathbbm{e}^{\mathbbm{i}\alpha_n}\mathbbm{e}^{\mathbbm{i}nx}.
\end{equation*}
\par\end{raggedright}
\raggedright{}\textbf{\caption{\textbf{Signal Recovery with Trigonometric Polynomial Masks} \label{Big Algorithm}}
}
\end{algorithm}
\textbf{}
\begin{algorithm}
\begin{raggedright}
\textbf{Inputs}
\par\end{raggedright}
\begin{enumerate}
\item \begin{raggedright}
Compactly supported mask $\tilde{m}$ satisfying Assumption \ref{as: compact}.
\par\end{raggedright}
\item \begin{raggedright}
Matrix $\mathbf{Y}=(Y_{\omega,\ell})_{\omega\in\mathcal{K},\ell\in\mathcal{D}}$ of spectrogram measurements defined as in \eqref{eq:conti-meas2}.
\par\end{raggedright}
\end{enumerate}
\begin{raggedright}
\textbf{Steps}
\par\end{raggedright}
\begin{enumerate}
\item Define vector 
$\mathbf{z}=(z_p)_{p\in\mathcal{D}}$ by 
$z_{p} = \tilde{m}\left(\frac{2\pi p}{d}\right).
$
\item Let $\kappa=K-\delta$, and for $1-\kappa\leq \omega \leq \kappa-1, 1-s\leq \ell\leq s-1$ estimate   
\begin{equation*}
\mathbf{F_d}\left({\bf {\bf \widehat{x}}}\circ S_{\ell}\overline{{\bf {\bf \widehat{x}}}}\right)\approx\frac{1}{4\pi^2Kd^2}\left(\frac{(\mathbf{F_d}\mathbf{Y}^T\mathbf{F_K}^T)_{-\ell}}{(\mathbf{F_d}(\widehat{\mathbf{z}}\circ S_{-\ell}\overline{\widehat{\mathbf{z}}}))}\right). 
\end{equation*}

\item Form the matrix $\mathbf{C}$ according to \eqref{eq: defU}.

\item Compute  $\mathbf{V}=\mathbf{W}^\dag\mathbf{C}$, where $\mathbf{W}=((\mathbf{F_d})_{j,k})_{j\in[2\kappa-1]_c,k\in \mathcal{S}}$ is the $(2\kappa-1)\times s$ partial Fourier matrix.
 
\item Apply lifting operator $\Lambda$. 
\item Hermitianize $\Lambda(\mathbf{V})$ to obtain the matrix $\mathbf{A}=(A_{i,j})_{i,j\in\mathcal{D}}$  as described in \eqref{eq: defAcompact}. 
\item Estimate $|\widehat{f}(n)|\approx a_n=\sqrt{|A_{n,n}|}$.
\item For $n\in\mathcal{S}$, choose $\{n_\ell\}_{\ell=0}^b$ according to Algorithm \ref{alg: greedy entry selection}. 
\item Approximate 
\begin{equation*}\arg\big(\widehat{f}(n)\big)\approx \alpha_n=\sum_{\ell=0}^{b-1} \arg\left(A_{n_{\ell+1},n_{\ell}}\right).
\end{equation*}
\end{enumerate} 
\begin{raggedright}
\textbf{Output}
\par\end{raggedright}
\begin{raggedright} 
An approximation of $f$ given by 
\begin{equation*}
    f_e(x)=\sum_{n\in\mathcal{S}}a_n\mathbbm{e}^{\mathbbm{i}\alpha_n}\mathbbm{e}^{\mathbbm{i}nx}.
\end{equation*}
\par\end{raggedright}
\raggedright{}\textbf{\caption{\textbf{Signal Recovery with Compactly Supported Masks} \label{Big Algorithm Compact}}
}
\end{algorithm}
\textbf{}
\begin{algorithm}
\begin{raggedright}
\textbf{Inputs}
\par\end{raggedright}
\begin{enumerate}
\item \begin{raggedright}
Vector of amplitudes $\mathbf{a}=(a_n)_{n\in\mathcal{D}},\quad a_n=\sqrt{|A_{n,n}|}$.
\par\end{raggedright}
\item \begin{raggedright}
Entry $n\in\mathcal{S}$.\par\end{raggedright}
\end{enumerate} 
\begin{raggedright}
\textbf{Steps}
\par\end{raggedright}
\begin{enumerate}
\item Choose $n_0=\argmax_{n\in\mathcal{S}}a_n$.
\item Let $b=0$.
\item While: $|n-n_b|\geq \gamma$.
\subitem {\hspace{.1in} If}: $n>n_b$, let $n_{b+1}\leftarrow\argmax_{n_b+\gamma-\beta\leq m<n_b+\gamma} a_m$.
\subitem {\hspace{.1in} If}: $n<n_b$, let $n_{b+1}\leftarrow\argmax_{n_b-\gamma<m\geq n_b-\gamma+\beta} a_m$.
\subitem \hspace{.1in} $b\leftarrow b+1$.
\item $n_b\leftarrow n$.
\end{enumerate} 
\begin{raggedright}
\textbf{Output}
\par\end{raggedright}
\begin{raggedright}A sequence $\{n_\ell\}_{\ell=0}^b$, $|n_{\ell+1}-n_{\ell}|<2\beta$, $n_b=n$, $b\leq \frac{d}{\beta}$.
\par\end{raggedright}
\raggedright{}\textbf{\caption{\textbf{ Entry Selection}
\label{alg: greedy entry selection}}}
\end{algorithm}


\section{Empirical Evaluation}
\label{sec:Eval}
We now present numerical results demonstrating the efficiency and robustness of 
Algorithms \ref{Big Algorithm} and \ref{Big Algorithm Compact}.

\subsection{Empirical Evaluation of Algorithm \ref{Big Algorithm}}
\label{subsec:numerics_alg1}
We begin by 
investigating the empirical performance of Algorithm \ref{Big Algorithm} in recovering 
the following class of compactly supported $C^\infty$-smooth test functions,
\begin{equation}
	f(x) \coloneqq \sum_{j=1}^J\alpha_j \ \xi_{c_1, c_2}(x-\nu_j).
	\label{eq:testfun_numerics}
\end{equation}
Here $J \in \mathbb N$, $\alpha_j \in \mathbb C$, $\nu_j \in [-\pi, \pi]$, and 
$\xi_{c_1,c_2}$ denotes a $C^\infty$-smooth bump function with $\xi_{c_1,c_2}(x)>0$ 
in $(c_1,c_2)$ and $\xi_{c_1,c_2}(x) = 0$ for $x \notin [c_1, c_2]$. For the 
experiments below, we set $J=4$, $c_1=-\pi/5$, $c_2=\pi/5$, and choose $\alpha_j$ 
such that its real and complex components are both i.i.d. uniform random variables 
$\mathcal U[-1,1]$. The shifts $\nu_j$ are selected uniformly at random (without 
repetition) from the set 
$\left \{ -\nu_{{\rm max}}+j(2\nu_{{\rm max}}/(2J-1))\right \}_{j=0}^{2J-1}$ where  $\nu_{{\rm max}} = 0.9\pi - \max\{|c_1|, |c_2|\}$
so that ${\rm supp}(f) \subseteq [-\pi,\pi]$. A 
representative plot of (the real and imaginary parts of) such a test function is
provided in Fig. \ref{fig:testfnc_alg1}.  

\begin{figure}[htbp]
\captionsetup[subfigure]{justification=centering}
\centering
\begin{subfigure}[b]{0.45\textwidth}
\centering
\includegraphics[clip=true, trim = .85in 2.5in 0.85in 2.5in,scale=0.3]{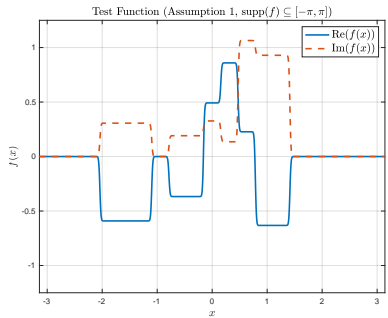}
\caption{Test Function 
(with ${\rm supp}(f)\subseteq [-\pi,\pi]$)}
\label{fig:testfnc_alg1}
\end{subfigure}
\hspace{.35in}
\begin{subfigure}[b]{0.45\textwidth}
\centering
\includegraphics[clip=true, trim = .85in 2.5in 0.85in 2.5in,scale=0.3]{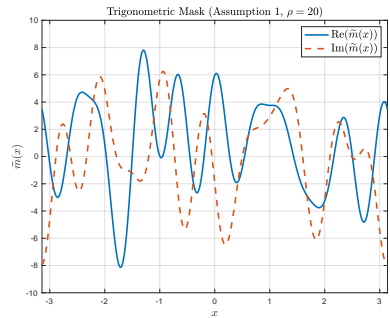}
\caption{Mask (Trigonometric Polynomial; 
$\rho=20$)}
\label{fig:mask_alg1}
\end{subfigure} 
\caption{Representative Test Function and Mask Satisfying Assumption $1$.}
\label{fig:testfnc_mask_alg1}
\end{figure}

To generate masks satisfying Assumption 1 (see Section 1.1), we choose the Fourier 
coefficients $\widehat{m}$ from a zero mean, unit variance i.i.d. complex Gaussian 
distribution and  empirically verify that the mask-dependent constant $\mu_1$ 
(as defined in (\ref{eq: mu1}) is strictly positive. Fig. \ref{fig:mask_alg1} plots such a 
(complex) trigonometric mask for $\rho = 20$, where $\rho+1$ is the (two-sided) 
bandwidth of the mask. Table \ref{tab:muvals_alg1} lists the empirically calculated 
$\mu_1$ values, and averaged over $100$ trials) for 
such masks. The left two columns of the table list $\mu_1$ for a fixed discretization size ($d=211$) and varying $\rho$; they show that $\mu_1$ is approximately constant for 
fixed $d$. The right two columns list $\mu_1$ values for fixed $\rho$ and varying 
$d$; they show $\mu_1$ decreases slowly with $d$ (roughly proportional to $1/d$). 
This verifies that constructing admissible (i.e., with $\mu_1\neq 0$) trigonometric 
masks as per Assumption 1 is indeed possible for reasonable values of $d$ and 
$\rho$.
\begin{table}[htbp]
\centering
\begin{tabular}{|c|c||c|c|}
\hline
($d=211,\rho)$ &    $\mu_1$ (Average over $100$ trials)    & ($d,\rho=50)$ & $\mu_1$ (Average over $100$ trials)   \\
\hline
\hline
  $(211,20)$     &  $1.957 \times 10^{-4}$ & $(111,50)$ & $4.825 \times 10^{-4}$\\
  $(211,40)$     &  $1.704 \times 10^{-4}$ & $(223,50)$ & $1.560 \times 10^{-4}$\\
  $(211,60)$     &  $1.563 \times 10^{-4}$ & $(447,50)$ & $6.199 \times 10^{-5}$\\
  $(211,80)$    &  $1.500 \times 10^{-4}$ & $(895,50)$ & $2.162 \times 10^{-5}$\\
  $(211,100)$    &  $1.530 \times 10^{-4}$ & $(1791,50)$ & $8.247 \times 10^{-6}$\\
 \hline
\end{tabular}
\caption{Empirically evaluated $\mu_1$ values (mask constant) for Algorithm \ref{Big Algorithm}. (Fourier coefficients of mask chosen as i.i.d. complex standard normal entries. Left two columns show $\mu_1$ values for fixed $d$, right two columns show $\mu_1$ values for fixed $\rho$.)}
\label{tab:muvals_alg1}
\end{table}

Finding closed form analytical expressions for the integral in (\ref{eq: restrict to pi}) is 
non-trivial. Therefore, we use numerical 
quadrature computations on an equispaced fine grid (of $10,001$ points) in $[-\pi, \pi]$ 
to generate phaseless measurements corresponding to (\ref{eq: restrict to pi}) under both Assumptions 1 and 2.

We now investigate the noise robustness of Algorithm \ref{Big Algorithm}. For the results shown in 
Fig. \ref{fig:alg1_noise} (where each data point is generated by averaging the results of 
$100$ trials), we add i.i.d. random (real) Gaussian noise to the phaseless measurements 
(\ref{eq: restrict to pi}) at desired signal to noise ratios (SNRs). In particular, the noise matrix 
$\mathbf{\bm{\eta}_{K,L}} \in \mathbbm R^{d\times L}$ in Section \ref{sec: WDD} is chosen to be i.i.d. 
$\mathcal N(\mathbf 0,\sigma^2\mathbf I)$. The variance $\sigma^2$ is chosen such
that 
\[ \mbox{SNR (dB)} = 10 \log_{10} \left( \frac{\Vert \mathbf{Z}\Vert_F^2}
		{dL \, \sigma^2}\right) \]
where $\mathbf{Z}$ denotes the corresponding matrix of 
perfect (noiseless) measurements. 
Errors in the recovered signal are also reported in dB with 
\[ \mbox{Error (dB)} = 10 \log_{10} \left( \frac{h \sum_{i=0}^N\vert f(x_i) - 
f_e(x_i) \vert^2}{h \sum_{i=0}^N\vert f(x_i) \vert^2}\right), \]
where $f$ and $f_e$ denote the true and recovered functions respectively, and $x_i$ denotes (equispaced) grid 
points in $[-\pi, \pi]$, i.e. $x_i = -\pi + hi$ with $h \coloneqq 2\pi/N$. Errors 
reported in this section use $N=2003$.

Fig. \ref{fig:alg1_noise} plots the error in recovering a test function using
Algorithm \ref{Big Algorithm} (for $d=257,\rho=32, \kappa=\rho-1$ and $(2\rho-1)d$ total measurements) over a wide range of SNRs. 
For reference, we also include
results using an improved reconstruction method 
based on Algorithm \ref{Big Algorithm}, as well as the popular HIO+ER alternating 
projection algorithm \cite{bauschke2002phase,fienup1982phase,marchesini2016alternating}. 
Refinements over Algorithm \ref{Big Algorithm} 
included use of an improved eigenvector-based magnitude estimation procedure in place 
of Step 6 (see \cite[Section 6.1]{IPSV18} for details), and (exponential) low-pass filtering\footnote{With filter order increasing with 
SNR; we used a $2$\textsuperscript{nd}-order filter at $10$dB SNR and a $12$\textsuperscript{th}-order 
filter at $60$dB SNR.} in the output Fourier partial sum 
reconstruction step of Algorithm~\ref{Big Algorithm}.
The HIO+ER algorithm implementation used the zero vector as an initial guess, 
although use of a random starting guess did not change the qualitative nature of the
results. As is common practice, (see for example \cite{fienup1982phase}) we
implemented the HIO+ER algorithm in blocks of eight HIO iterations
followed by two ER iterations in order to accelerate convergence of the
algorithm. To minimize computational cost while ensuring convergence (see Fig. \ref{fig:alg1_elbow_hio}), the
total number of HIO+ER iterations was limited to $30$.  
As we see, Algorithm \ref{Big Algorithm} compares well with the popular HIO+ER algorithm, with 
the improved method offering even better noise performance. Furthermore, this post-processing procedure does not significantly increase the computational cost. Fig. \ref{fig:alg1_etime} plots the execution time (in seconds, averaged over $100$ trials) to recover a test signal using $dL$ measurements, where $d$ is the discretization size, $L=2\rho-1$ and $\rho = \min\{ (d-5)/2, 2\lfloor\log_2(d)\rfloor\}$. Both Algorithm \ref{Big Algorithm} and its refined variant are essentially $\mathcal{O}(dL)$, where $dL$ is the number of measurements acquired, with Algorithm \ref{Big Algorithm} performing much faster than the HIO+ER procedure. Finally, we note that reconstruction error can be reduced by increasing the 
number of shifts $L$ acquired (and consequently, the total number of measurements). Fig. \ref{fig:alg1_shifts} plots the error in reconstructing a test signal discretized using $d=257$ points, $\kappa=\rho-1$ and $Ld = (2\rho-1)d$ measurements for different values of $\rho$ (and correspondingly $L$). As expected, we see that noise performance improves as $L$ increases. Additional numerical experiments studying the convergence behavior of Algorithm \ref{Big Algorithm} (in the absence of measurement errors) can be found in Appendix \ref{sec:appendix_numerics}.

\begin{figure}[htbp]
\centering
\begin{subfigure}[b]{0.245\textwidth}
\includegraphics[clip=true, trim = .8in 2.5in 0.85in 2.5in,scale=0.245]{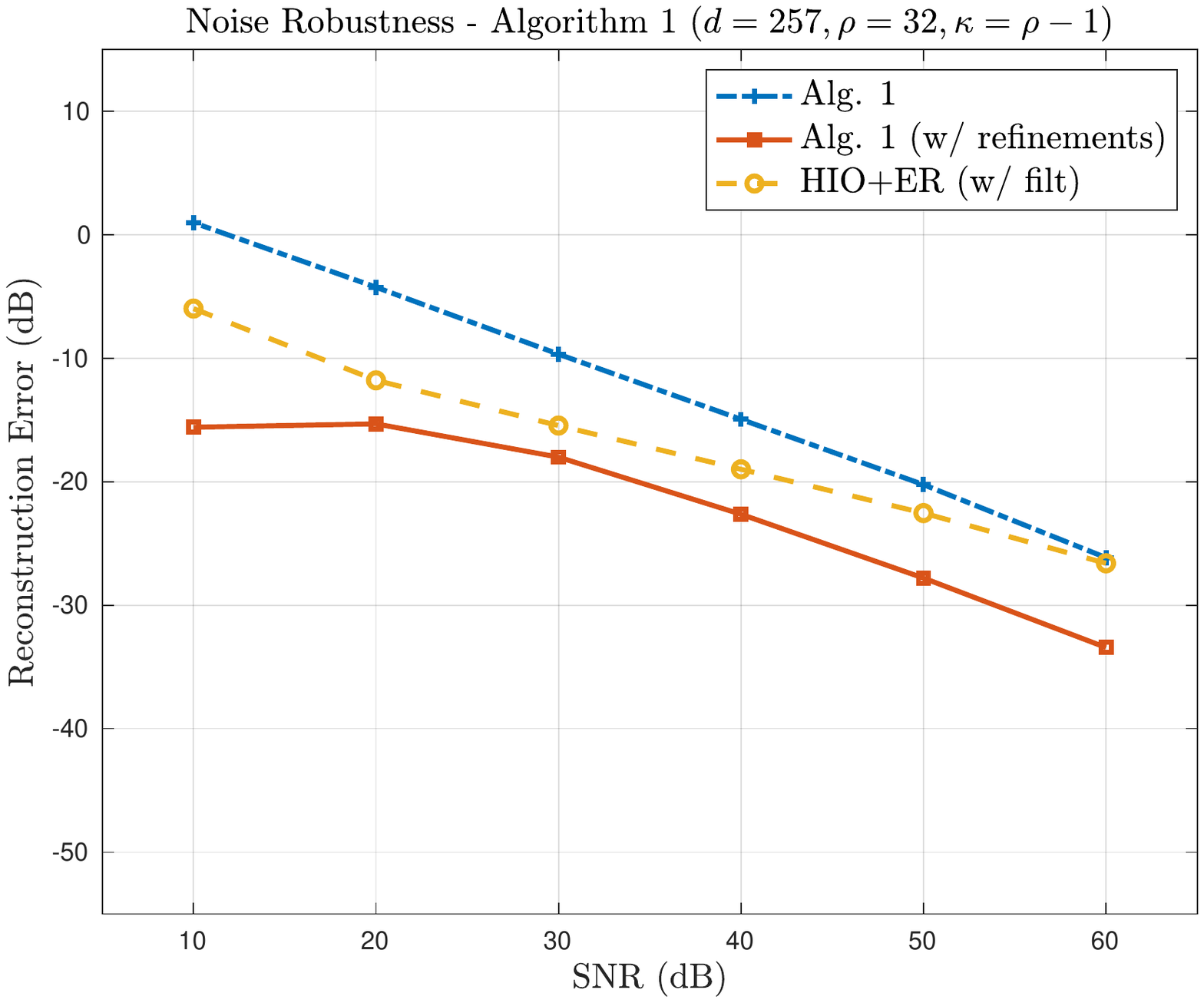}
\caption{Noise Robustness}
\label{fig:alg1_noise}
\end{subfigure}
\hfill
\begin{subfigure}[b]{0.245\textwidth}
\includegraphics[clip=true, trim = .8in 2.5in 0.85in 2.5in,scale=0.245]{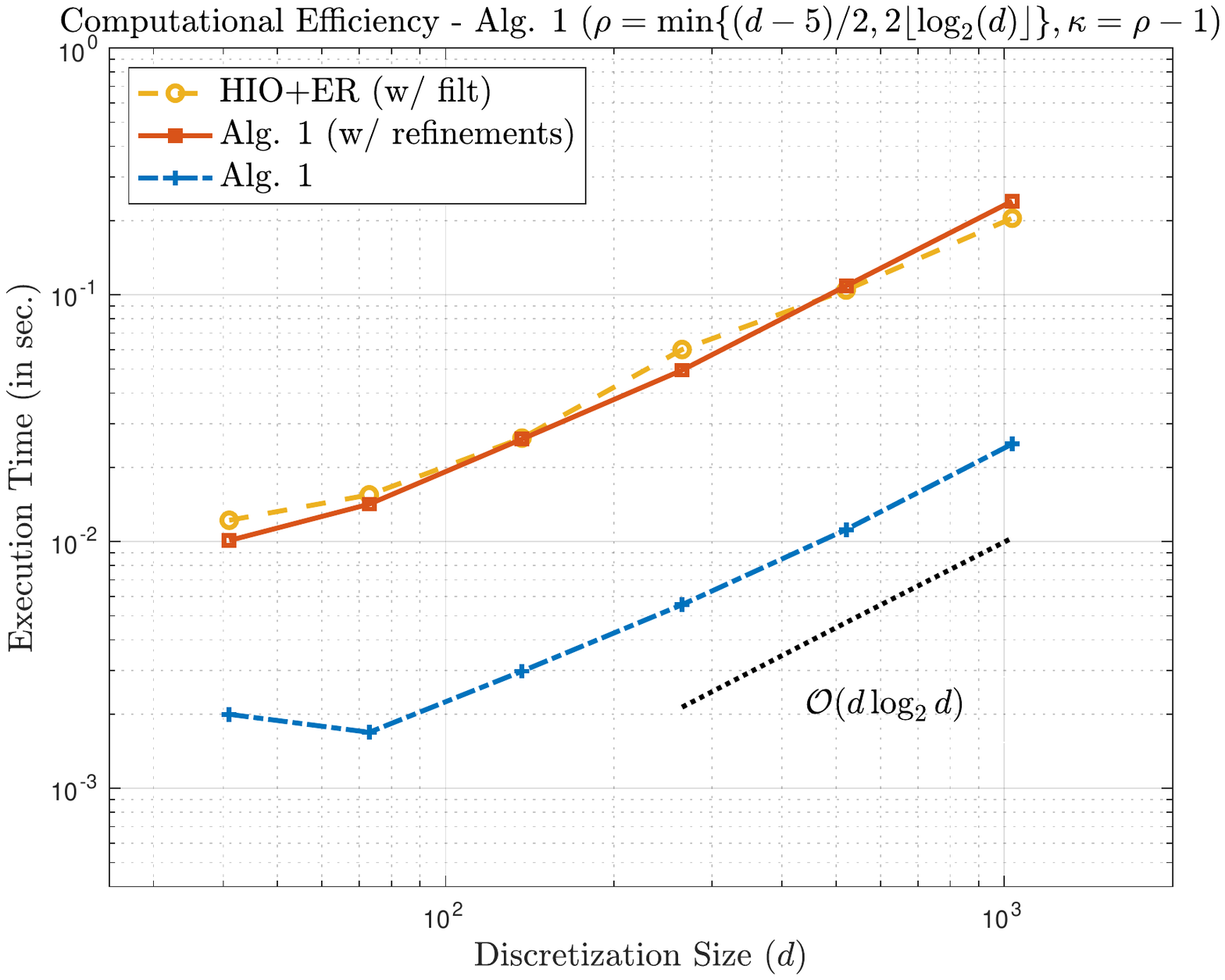}
\caption{Computational Cost}
\label{fig:alg1_etime}
\end{subfigure} 
\hfill
\begin{subfigure}[b]{0.245\textwidth}
\includegraphics[clip=true, trim = .8in 2.5in 0.85in 2.5in,scale=0.245]{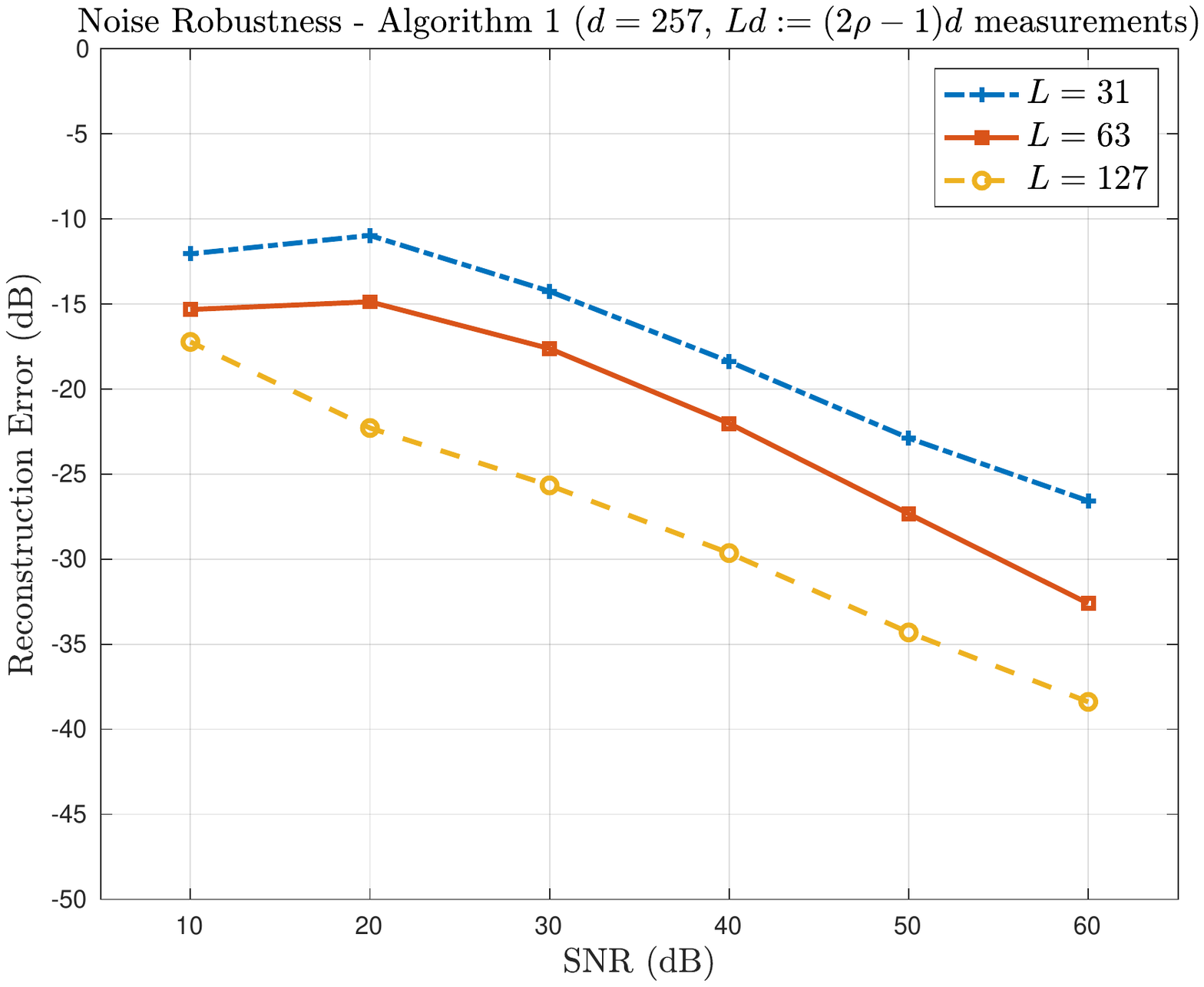}
\caption{Error vs. No. of Shifts}
\label{fig:alg1_shifts}
\end{subfigure}
\hfill
\begin{subfigure}[b]{0.245\textwidth}
\includegraphics[clip=true, trim = .8in 2.5in 0.85in 2.5in,scale=0.245]{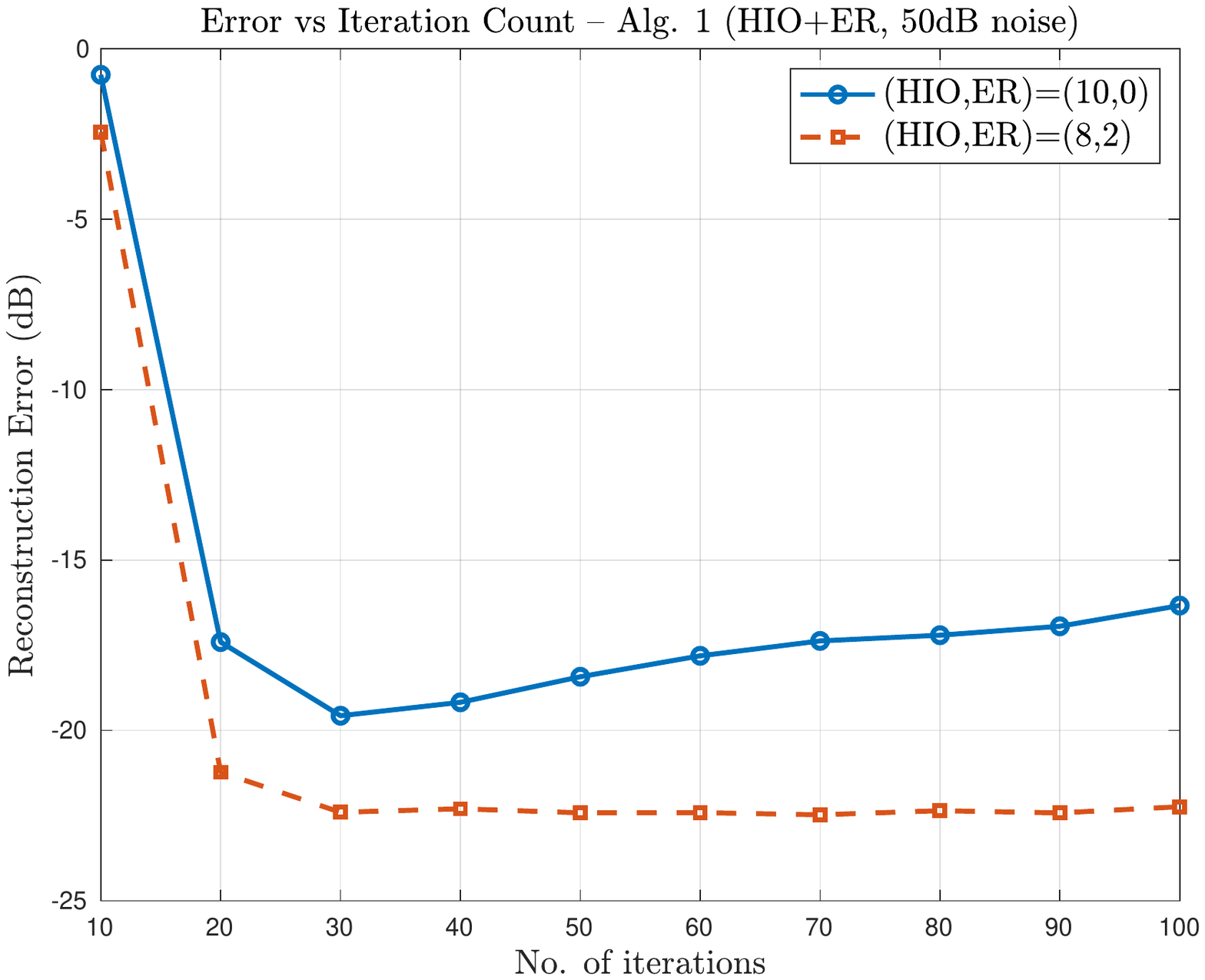}
\caption{HIO+ER Iterations\protect\footnotemark}
\label{fig:alg1_elbow_hio}
\end{subfigure}
\caption{Empirical Evaluation of Algorithm \ref{Big Algorithm} and Selection of HIO+ER Parameters for Comparison}
\label{fig:alg1_numerics}
\end{figure}
\footnotetext{The notation (HIO,ER)=($x$,$y$) in this figure denotes implementation of the HIO+ER algorithm in ``blocks" of $x$ iterations of the HIO algorithm followed by $y$ iterations of the ER algorithm. We choose $30$ total iterations of the red dashed plot in our implementations of the HIO+ER algorithm in this section.}

\subsection{Empirical Evaluation of Algorithm \ref{Big Algorithm Compact}}
\label{subsec:numerics_alg2}
We next present empirical simulations evaluating the robustness and efficiency 
of Algorithm \ref{Big Algorithm Compact}. As detailed in Assumption 2 (see Section 1.1), we recover compactly supported test functions with $\supp(f) \subseteq (-a,a)$ using compactly supported masks which satisfy $\supp(\widetilde m) \subseteq (-b,b)$, where $a+b<\pi$. For experiments in this section, we choose $b=3/4$ and $a=0.9(\pi-3/4)$. The test functions are generated as detailed in 
(\ref{eq:testfun_numerics}) of Section \ref{subsec:numerics_alg1}, as a (complex) weighted sum of shifted $C^\infty$-smooth bump functions, but with a maximum shift of 
$\nu_{{\rm max}} = a-b$. A representative test function is plotted in Fig. \ref{fig:testfnc_alg2}. The corresponding compactly supported masks are generated as the product of a trigonometric polynomial and a bump function 
using
\begin{equation}
    \widetilde{m}(x) = \xi_{-b.b} (x) \cdot \left( \sum_{p=-\rho/2}^{\rho/2} \widehat m(p) \mathbbm e^{\mathbbm i px/b} \right), 
    \label{eq:mask_alg2}
\end{equation}
where $\xi_{-b,b}$ is the $C^\infty$-smooth bump function described in Section \ref{subsec:numerics_alg1}, and the term in the parenthesis describes a (complex) $2b$-periodic trigonometric polynomial. A representative example of such as mask is provided in Fig. \ref{fig:mask_alg2} with $\rho=16$ and the coefficients $\widehat m$ chosen from a zero mean, unit variance i.i.d. complex Gaussian distribution. 

\begin{figure}[htbp]
\captionsetup[subfigure]{justification=centering}
\centering
\begin{subfigure}[b]{0.45\textwidth}
\centering
\includegraphics[clip=true, trim = .85in 2.5in 0.85in 2.5in,scale=0.3]{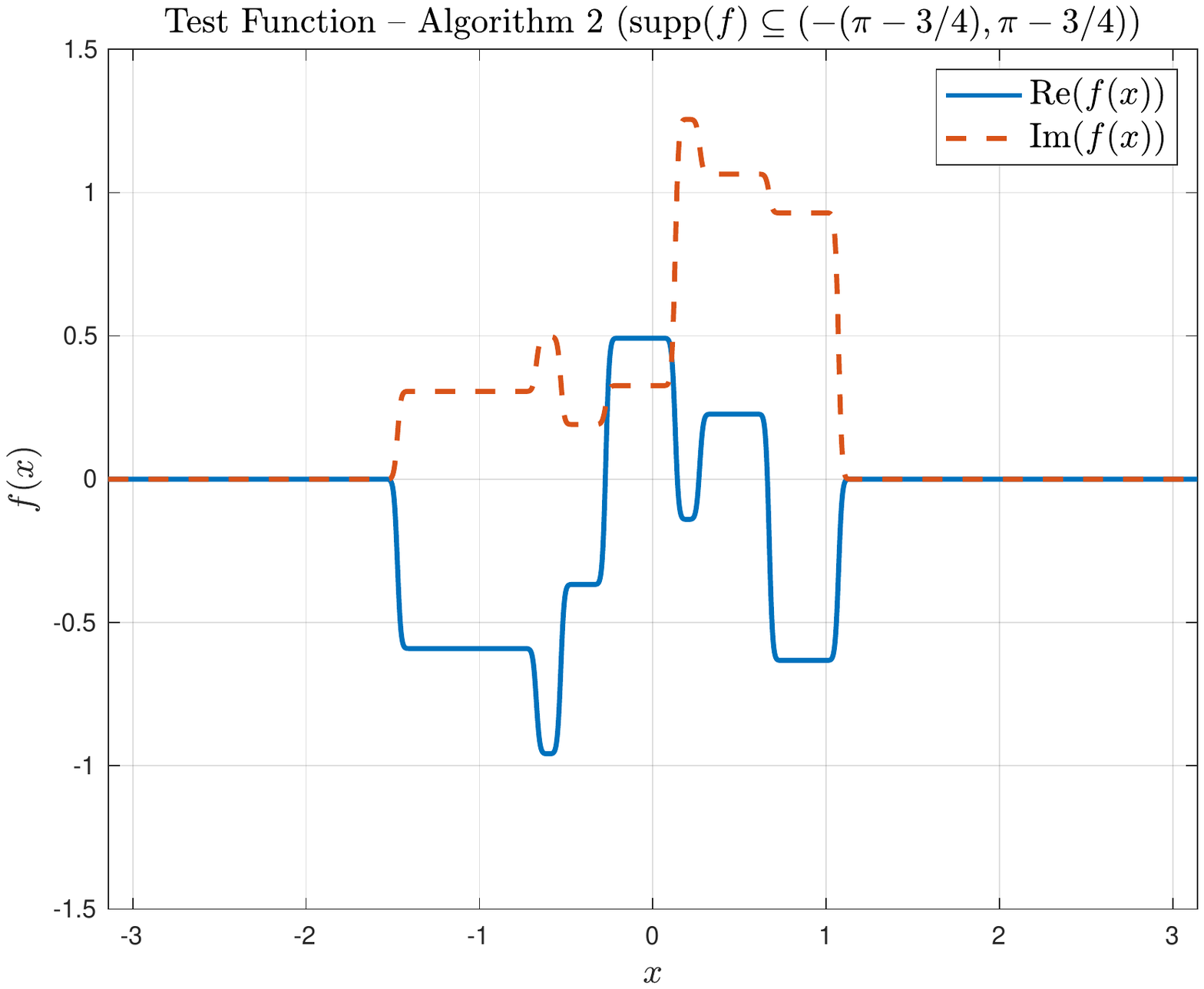}
\caption{Test Function ($a=\pi-3/4$; ${\rm supp}(f)\subseteq (-a,a))$}
\label{fig:testfnc_alg2}
\end{subfigure}
\hspace{.35in}
\begin{subfigure}[b]{0.45\textwidth}
\centering
\includegraphics[clip=true, trim = .85in 2.5in 0.85in 2.5in,scale=0.3]{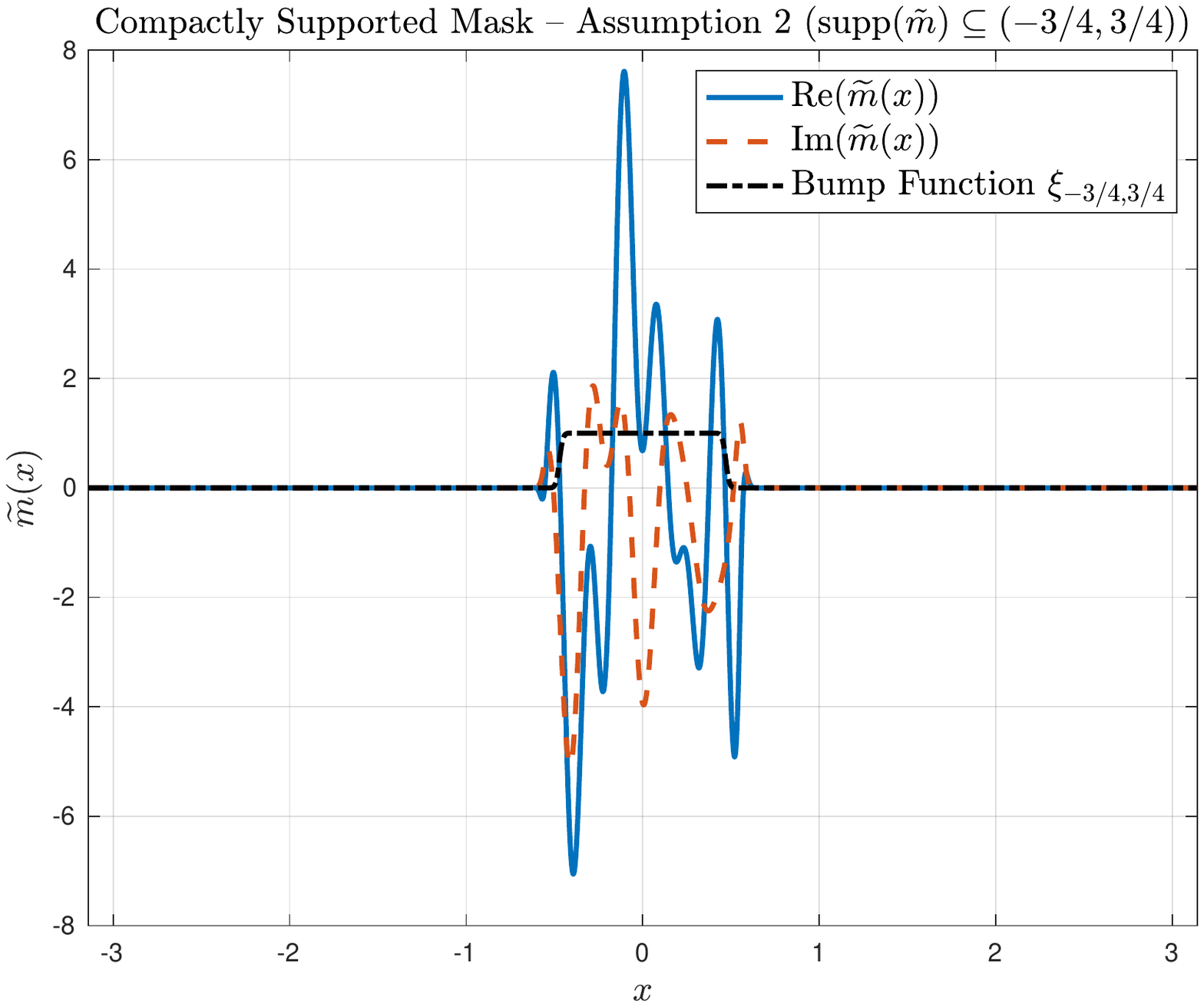}
\caption{Mask (${\rm 
supp}(\widetilde{m})\subseteq (-b,b) = (-3/4,3/4)$)}
\label{fig:mask_alg2}
\end{subfigure}
\caption{Representative Test Function and Mask Satisfying Assumption $2$.}
\label{fig:testfnc_mask_alg2}
\end{figure}
\begin{table}[htbp]
\centering
\begin{tabular}{|c|c||c|c|}
\hline
($d=189,\kappa)$ &    $\mu_2$ (Average over $100$ trials)    & ($d,\kappa=27)$ & $\mu_2$ (Average over $100$ trials)   \\
\hline
\hline
  $(189,3)$     &  $2.563 \times 10^{-3}$ & $(165,27)$ & $9.722 \times 10^{-5}$\\
  $(189,10)$     &  $2.873 \times 10^{-4}$ & $(223,27)$ & $8.866 \times 10^{-5}$\\
  $(189,31)$     &  $8.331 \times 10^{-5}$ & $(495,27)$ & $4.686 \times 10^{-5}$\\
  $(189,94)$    &  $2.642 \times 10^{-19}$ & $(1045,27)$ & $2.448 \times 10^{-5}$\\
 \hline
\end{tabular}
\caption{Empirically evaluated $\mu_2$ values (mask constant) for Algorithm \ref{Big Algorithm Compact}. The left two columns show $\mu_2$ values for fixed $d$, right two columns show $\mu_2$ values for fixed $\kappa$. Here, 
$\delta = \kappa+1$ and $s=\kappa-1$.}
\label{tab:muvals_alg2}
\end{table}

Representative values of the mask constant $\mu_2$ (as defined in (\ref{eq: mu2}) and averaged over $100$ trials) are listed in Table \ref{tab:muvals_alg2}. The first two columns list $\mu_2$ values for fixed discretization size $d$, while the last two columns list $\mu_2$ values for fixed $\kappa$. In both cases, we set $K=2\kappa+1$ and ensure that $K$ divides $d$. We note that $\kappa$ denotes the number of modes used in the Wigner deconvolution procedure (Step $2$) in Algorithm \ref{Big Algorithm Compact}. Since the masks constructed using (\ref{eq:mask_alg2}) are compactly supported and smooth, we expect the autocorrelation of their Fourier transforms (and the corresponding Fourier coefficients of this autocorrelation) to decay rapidly. Therefore, we expect $\mu_2$ to be small for large $\kappa$ values; indeed, this is seen in the last row of Table \ref{tab:muvals_alg2} where the $\mu_2$ value is essentially zero when $d=189,\kappa=94$. However, as the functions we expect to recover also exhibit rapid decay in Fourier coefficients, we only require a small number of their Fourier modes to ensure accurate reconstructions. Hence, small to moderate $\kappa$ values suffice. As seen in Table \ref{tab:muvals_alg2}, it is feasible to construct admissible masks (i.e., $\mu_2> 0$)  for such $(d,\kappa)$ pairs. 
Experiments have also been conducted with $\widetilde m$ chosen to be the bump function $\xi_{-b,b}$ and a (truncated) Gaussian, However, these experiments yield smaller mask constants $\mu_2$, which make the resulting  reconstructions more susceptible to noise. Selection of ``optimal" and physically realizable compactly supported masks is an open problem which we defer to future research. 

We note that due to the equivalence of (\ref{eq:FYF_single_aliasing}) and (\ref{eq:FYF_single_aliasing-modified}), the Wigner deconvolution step (Step $2$) in Algorithm \ref{Big Algorithm Compact} may be instead evaluated using (\ref{eq:FYF_single_aliasing}). While theoretical analysis of this equivalent procedure is more involved, it offers computational advantages since it does not require solving\footnote{We use the {\em Iterated Tikhonov} method (see \cite{buccini2017iterated}, \cite[Algorithm 3]{Perlmutter2020}) to invert the Vandermonde system in Step $4$ of Alg. \ref{Big Algorithm Compact}.} the Vandermonde system of Step $4$ in Algorithm \ref{Big Algorithm Compact}. The corresponding $\mu_2$ values for this procedure also follow the qualitative behavior in Table \ref{tab:muvals_alg2}. This variant of Algorithm \ref{Big Algorithm Compact} is used in generating some of the plots in Appendix \ref{sec:appendix_numerics}, while Fig. \ref{fig:alg2_numerics} provides a comparison of Algorithm \ref{Big Algorithm Compact} and this alternate implementation. 

\begin{figure}[htbp]
\centering
\begin{subfigure}[b]{0.32\textwidth}
\includegraphics[clip=true, trim = .85in 2.5in 0.85in 2.5in,scale=0.32]{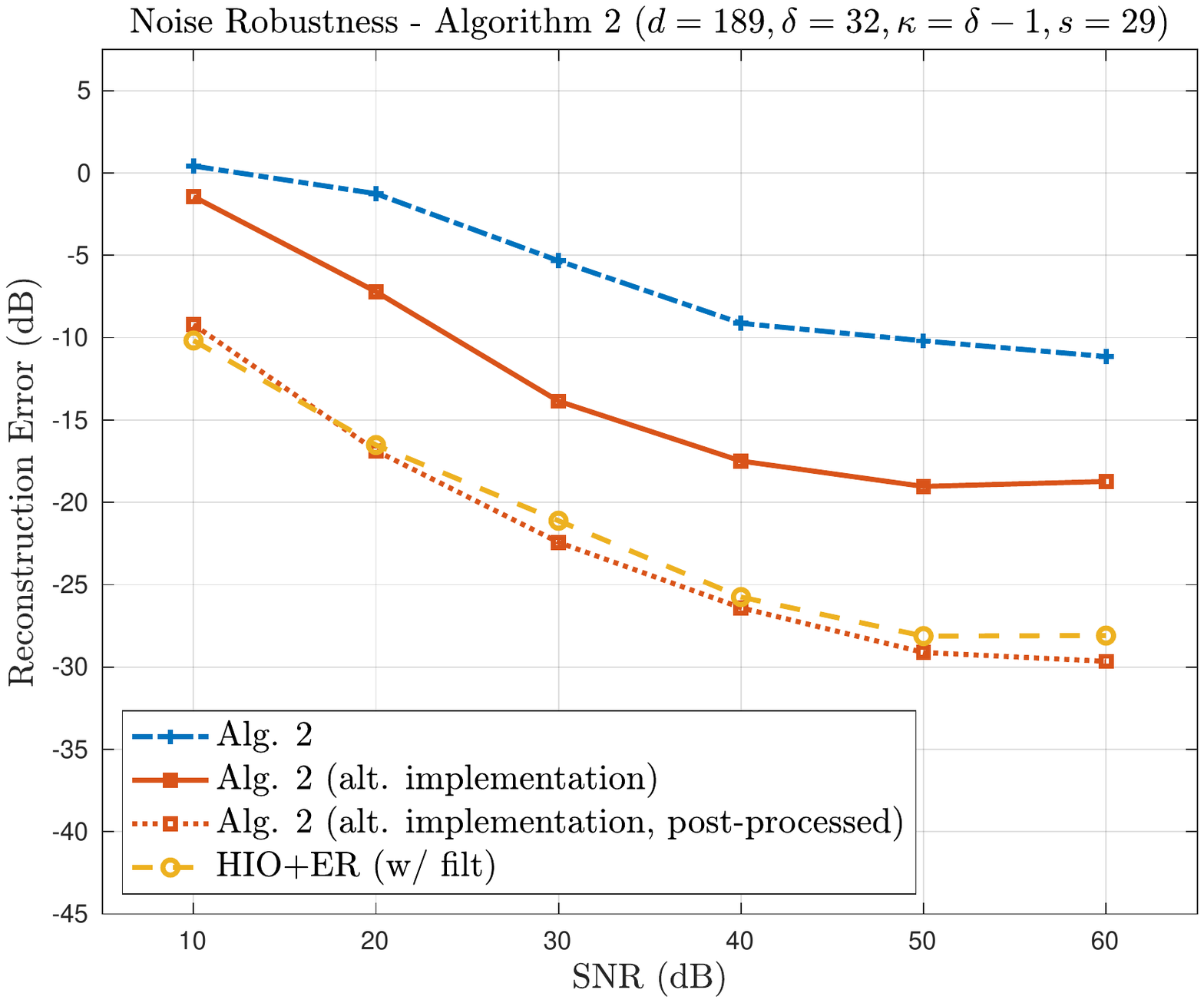}
\caption{Noise Robustness}
\label{fig:alg2_noise}
\end{subfigure}
\hfill
\begin{subfigure}[b]{0.32\textwidth}
\includegraphics[clip=true, trim = .85in 2.5in 0.85in 2.5in,scale=0.32]{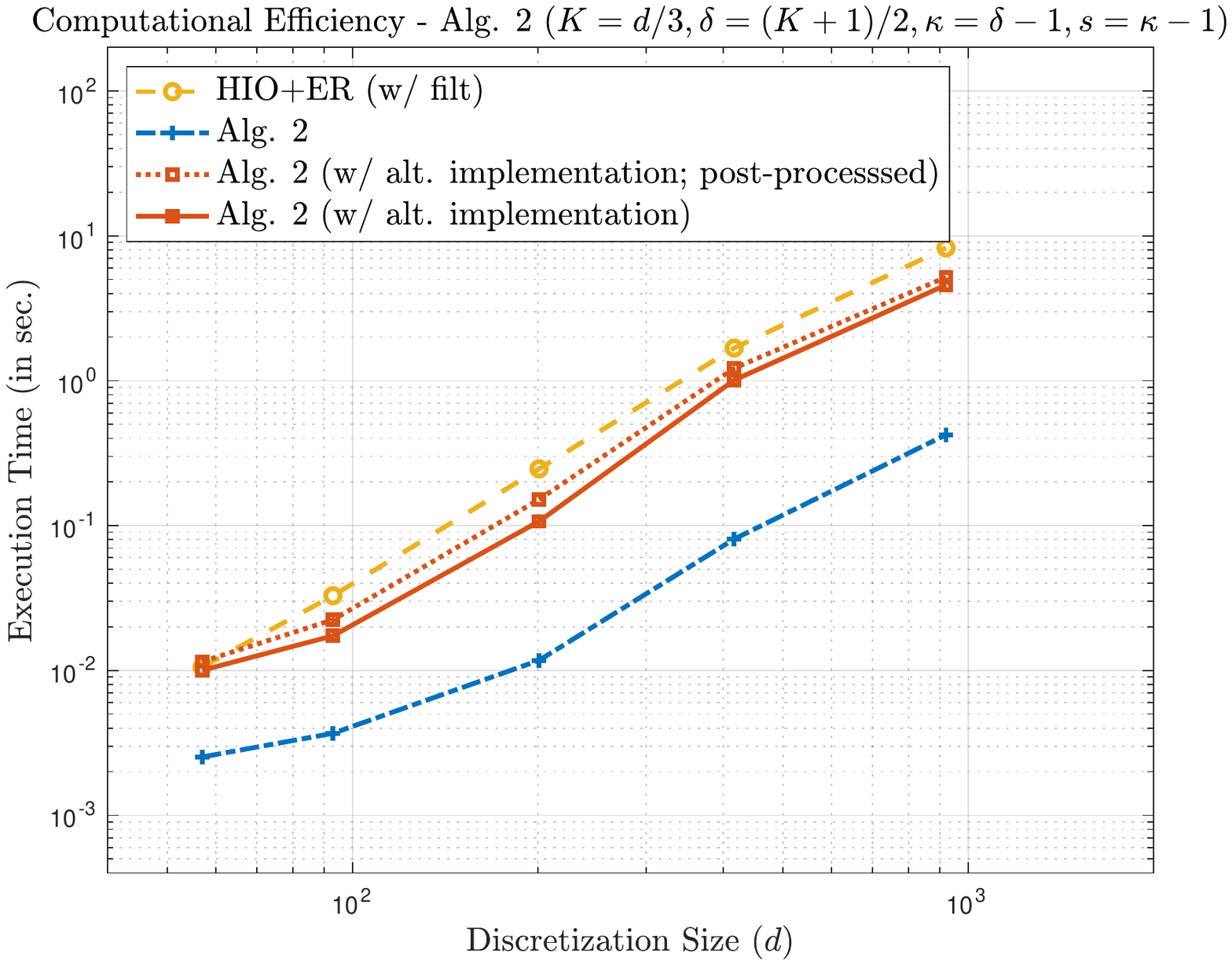}
\caption{Computational Cost}
\label{fig:alg2_etime}
\end{subfigure} 
\hfill
\begin{subfigure}[b]{0.32\textwidth}
\includegraphics[clip=true, trim = .85in 2.5in 0.85in 2.5in,scale=0.32]{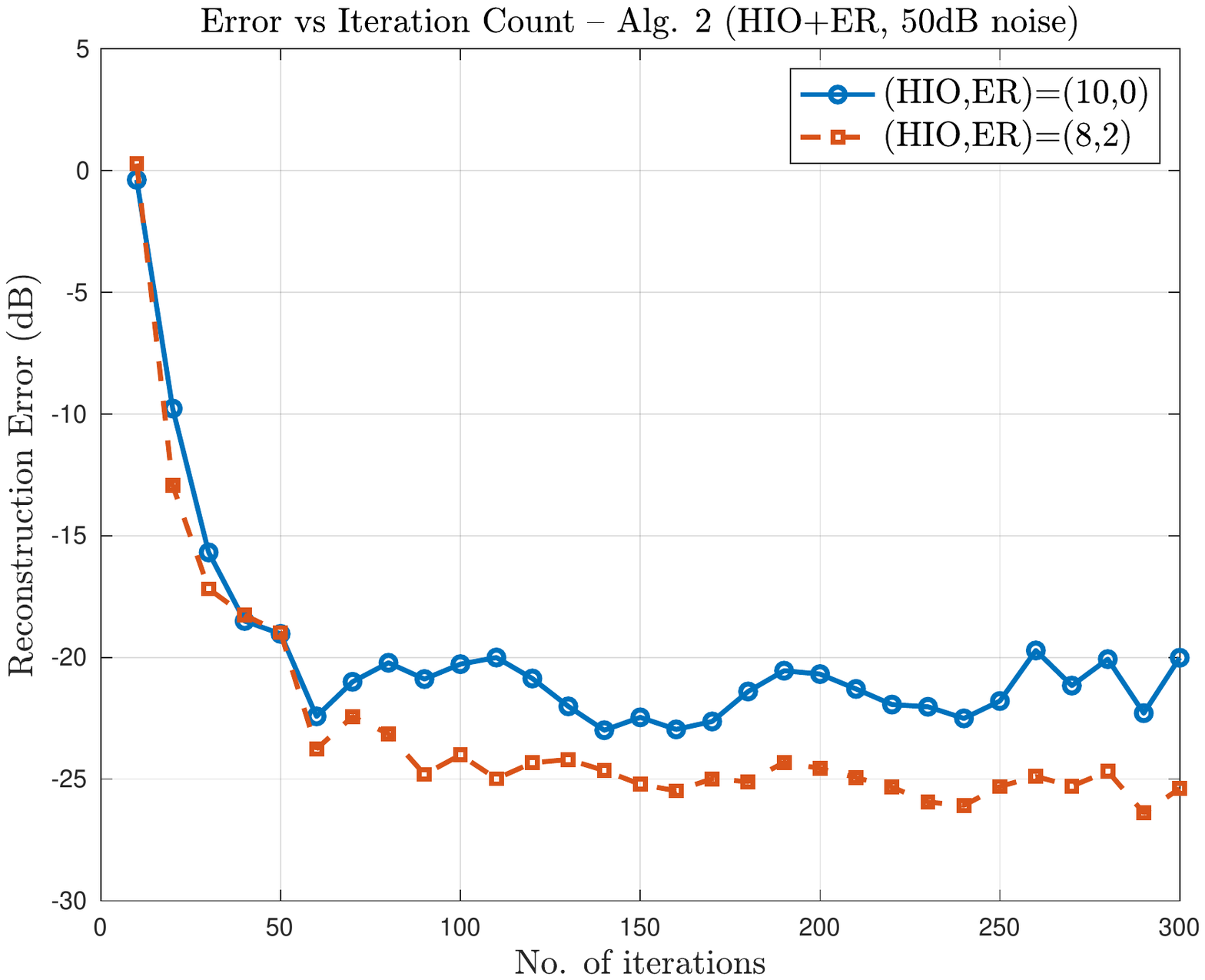}
\caption{HIO+ER Iterations}
\label{fig:alg2_elbow_hio}
\end{subfigure}
\caption{Empirical Evaluation of Algorithm \ref{Big Algorithm Compact} and Selection of HIO+ER Parameters for Comparison}
\label{fig:alg2_numerics}
\end{figure}

We now study the robustness and computational efficiency of Algorithm \ref{Big Algorithm Compact}. Fig. \ref{fig:alg2_noise} plots the error in recovering a test function (with each data point averaged over $100$ trials) for discretization size $d=189$, $\delta=32$, $\kappa=\delta-1$, $s=29$ and $d/3$ total measurements over a wide range of SNRs. 
For reference, we also include
results using the HIO+ER alternating 
projection algorithm, as well as the alternate implementation of Algorithm \ref{Big Algorithm Compact} (using (\ref{eq:FYF_single_aliasing}) to implement the Wigner deconvolution Step $2$). 
As in Section \ref{subsec:numerics_alg1}, the alternate implementation of Algorithm \ref{Big Algorithm Compact} and the HIO+ER implementations utilize (exponential) low-pass filtering.
The HIO+ER algorithm is implemented in blocks of eight HIO iterations
followed by two ER iterations in order to accelerate the convergence of the
algorithm, with a total of $100$ iterations used to ensure convergence while minimizing computational cost (see Fig. \ref{fig:alg2_elbow_hio}). 
The proposed method (especially the alternate implementation) compares well with the HIO+ER algorithm. 
Additionally, we also provide results using a post-processed implementation of Algorithm \ref{Big Algorithm Compact} using just $10$ iterations of HIO+ER. In this context, we can view the proposed method as an initializer which accelerates the convergence of alternating projection algorithms such as HIO+ER.
Finally, Fig. \ref{fig:alg2_etime}, which plots the execution time (in seconds, averaged over $100$ trials) to recover a test signal, shows that the proposed method in Algorithm \ref{Big Algorithm Compact} and its alternate implementation are computationally efficient, with all implementations running in $\mathcal{O}(dK)$ time where $dK$ is the number of measurements acquired.


\appendix
\section{The Proofs of Lemmas \ref{lem:Intbound} and \ref{lem: discretesum}}\label{app: proof of discretization lemmas}

\begin{proof}[The Proof of Lemma \ref{lem:Intbound}]
We first note that  $\|g\|_{L^\infty([-\pi,\pi])}<\infty$ since $g$ is a continuous periodic function. Next, we see that  since $g$ is $C^k$-smooth, we have $|\widehat{g}(\omega)| \leq C_g \left( \frac{1}{|\omega|} \right)^k$ 
for all
 $\omega \in \mathbb{Z} \setminus \{0\}$,  where  $C_g$ is a constant which depends on only $g$ and $k$.  As a result, we have 
\begin{equation*}
    \| P_{\mathcal{A}}g \|_{L^\infty([-\pi,\pi])} \leq \sum_{\omega \in\mathbb{Z}} |\widehat{g}(\omega)| \leq |\widehat{g}(0)| + 2C_g\sum_{m=1}^\infty \frac{1}{m^k}=C_g.
\end{equation*}
Similarly, 

$$\| g - P_{\mathcal{N}}g \|_{L^\infty([-\pi,\pi])} \leq \sum_{|\omega| \geq \frac{n+1}{2}} |\widehat{g}(\omega)| \leq  2C_g\sum_{|\omega| \geq \frac{n+1}{2}} \left(\frac{1}{|\omega|}\right)^\ell\leq C_g\left(\frac{1}{n}\right)^{\ell-1}.$$
The desired result now follows.
\end{proof}

\begin{proof}[The Proof of Lemma \ref{lem: discretesum}] Let $g\coloneqq  P_\mathcal{S}f$ and $h\coloneqq P_\mathcal{R}m$, where $P_\mathcal{S}$ and $P_\mathcal{R}$  are the Fourier projection operators defined as in \eqref{eq:Fourierprojection}. Since $g$ and $h$ are  
 trigonometric polynomials and $\mathcal{R} + \mathcal{S} \subseteq \mathcal{D}$, we may write
\begin{align*}
\int_{-\pi}^{\pi}g(x)h(x-\tilde\ell)\mathbbm{e}^{-\mathbbm{i}x\omega}dx
&= \sum_{m\in \mathcal{R}} \sum_{n\in\mathcal{S}} \widehat{g}(n) \widehat{h}(m) \mathbbm{e}^{-\mathbbm{i}m\tilde\ell}\int_{-\pi}^{\pi} \mathbbm{e}^{\mathbbm{i}(m+n-\omega)x}dx\nonumber\\
&=\sum_{m\in \mathcal{R}} \sum_{n\in\mathcal{S}} \widehat{g}(n) \widehat{h}(m) \mathbbm{e}^{-\mathbbm{i}m\tilde\ell} \frac{2\pi}{d}\sum_{p\in\mathcal{D}} \mathbbm{e}^{2\pi \mathbbm{i} p(n+m-\omega)/d}\nonumber\\
&= \frac{2\pi}{d}\sum_{p\in\mathcal{D}}\left(\sum_{n\in \mathcal{S}} \widehat{g}(n) \mathbbm{e}^{2\pi \mathbbm{i} pn/d}\right)\left(\sum_{m\in\mathcal{R}}\widehat{h}(m) \mathbbm{e}^{\left(\left(\frac{2\pi \mathbbm{i}m}{d}\right)\left(p-\ell \right)\right)}\right) \mathbbm{e}^{-2\pi \mathbbm{i} m p\omega/d}\nonumber\\
&= \frac{2\pi}{d}\sum_{p\in\mathcal{D}} g\left(\frac{2\pi p}{d}\right)h\left(\frac{2\pi (p-\ell)}{d}\right)    \mathbbm{e}^{-2\pi \mathbbm{i} p\omega/d}
\\
&=\frac{2\pi}{d}\sum_{p\in\mathcal{D}} x_p y_{p-\ell} \mathbbm{e}^{-2\pi \mathbbm{i} \omega p/d}\nonumber.
\end{align*}
\end{proof}

\section{The Proofs of Propositions \ref{prop:mu_condition} and \ref{prop: mu2} }\label{app: mask design}

\begin{proof}[The Proof of Proposition \ref{prop:mu_condition}] 
We first note that  \begin{equation*}\widehat{z}_q=\begin{cases}\widehat{m}(q)&\text{if } |q|\leq \rho/2,\\
0 &\text{if } |q|> \rho/2. \end{cases}
\end{equation*}
Therefore, for all $|p|\leq \kappa-1$, we have
\[
\left(\widehat{\mathbf{z}}\circ S_{p}\overline{\widehat{\mathbf{z}}}\right)_{q}=\begin{cases}
\widehat{m}(q)\widehat{m}(p+q) & \text{if }-\rho/2\leq q,\,p+q \leq \rho/2,\\
0 & \text{otherwise}.
\end{cases}
\]
For any $|p|\leq \kappa-1,$
let \begin{equation*}
    \mathcal{I}_{p}\coloneqq\{q\in\mathcal{D}:-\rho/2\leq q\leq \rho/2\quad\text{and   } -\rho/2\leq q+p\leq \rho/2\}.
\end{equation*} One may check that 
\begin{equation*}
    \mathcal{I}_{p} = 
     \begin{cases}
     \left[-\frac{\rho}{2}-p, \frac{\rho}{2}\right]\cap\mathbb{Z}&\text{if }p<0\\
     \left[-\frac{\rho}{2}, \frac{\rho}{2}-p\right]\cap\mathbb{Z} &\text{if }p\geq 0
    \end{cases}.
\end{equation*}
Therefore, making a simple change of variables in the case $p<0,$ we have that
\begin{align*}
\mathbf{F_{d}}\left(\widehat{\mathbf{z}}\circ S_{p}\overline{\widehat{\mathbf{z}}}\right)_{q}
&=\frac{1}{d}\sum_{\ell\in\mathcal{I}_{p}}\widehat{m}(\ell)\widehat{m}(p+\ell)\mathbbm{e}^{-2\pi\mathbbm{i}q\ell/d}=\frac{1}{d}\sum_{\ell=-\rho/2}^{\rho/2-|p|}\widehat{m}(\ell)\widehat{m}\left(\ell+|p|\right)\mathbbm{e}^{\mathbbm{i}\phi_{p,q,\ell}},
\end{align*} 
where $\mathbbm{e}^{\mathbbm{i}\phi_{p,q,\ell}}$ is a unimodular complex number depending on $p,q$ and $\ell.$
Using the assumptions \eqref{eqn: a0big} and \eqref{eqn: decreasingamplitudes},
we see that 
\begin{align*}
 \bigg|\frac{1}{d}\sum_{\ell=-\rho/2+1}^{\rho/2-|p|}\widehat{m}(\ell)\widehat{m}(\ell+|p|)\mathbbm{e}^{\mathbbm{i}\phi_{p,q,\ell}}\bigg|  
 &\leq  \frac{\rho}{d} \left|\widehat{m}\left(\frac{-\rho}{2}+1\right)\right|\left|\widehat{m}\left(\frac{-\rho}{2}+1+|p|\right)\right|\\
 &\leq\frac{1}{2d} \left|\widehat{m}\left(\frac{-\rho}{2}\right)\right|\left|\widehat{m}\left(\frac{-\rho}{2}+|p|\right)\right|.
\end{align*}
With this, we may use the reverse triangle inequality to see  
\begin{align*}
\bigg|\mathbf{F_{d}}\left(\widehat{\mathbf{z}}\circ S_{p}\overline{\widehat{\mathbf{z}}}\right)_{q}\bigg| &=\bigg|\frac{1}{d}\sum_{\ell=-\rho/2}^{\rho/2-|p|}\widehat{m}(\ell)\widehat{m}(\ell+|p|)\mathbbm{e}^{\mathbbm{i}\phi_{p,q,\ell}}\bigg|\\
&\geq \frac{1}{d} \bigg|\widehat{m}\left(\frac{-\rho}{2}\right)\bigg|\left|\widehat{m}\left(\frac{-\rho}{2}+|p|\right)\right| - \frac{1}{d}\bigg|\sum_{\ell=-\rho/2+1}^{\rho/2-|p|}\widehat{m}(\ell)\widehat{m}(\ell+|p|)\mathbbm{e}^{\mathbbm{i}\phi_{p,q,\ell}}\bigg|\\
&\geq\frac{1}{2d} \left|\widehat{m}\left(\frac{-\rho}{2}\right)\right|\left|\widehat{m}\left(\frac{-\rho}{2}+|p|\right)\right|\\
&\geq\frac{1}{2d} \left|\widehat{m}\left(\frac{-\rho}{2}\right)\right|\left|\widehat{m}\left(\frac{-\rho}{2}+\kappa-1|\right)\right|.
\end{align*}
\end{proof}

\begin{proof}[The Proof of Proposition \ref{prop: mu2}]
First, we note that by applying Lemma \ref{lem: switchbackandforth}, and setting $p=\omega, q=\ell$, we have 
\begin{equation*}\mu_2= \inf_{\omega\in[2\kappa-1]_c,\ell\in[2s-1]_c} |
    (\mathbf{F_d}(\widehat{\mathbf{z}}\circ S_{\ell}\overline{\widehat{\mathbf{z}}}))_\omega|=\frac{1}{d}\inf_{\omega\in[2\kappa-1]_c,\ell\in[2s-1]_c} |
    (\mathbf{F_d}(\mathbf{z}\circ S_{\omega}\overline{\mathbf{z}}))_\ell|=\frac{1}{d}\inf_{p\in[2\kappa-1]_c,q\in[2s-1]_c} |
    (\mathbf{F_d}(\mathbf{z}\circ S_{p}\overline{\mathbf{z}}))_q|.
\end{equation*} 
For $|p|\leq \kappa-1$, we have
\[
\left(\mathbf{z}\circ S_{p}\overline{\mathbf{z}}\right)_{q}=\begin{cases}
z_q\overline{z_{p+q}} & \text{if }n\leq q,p+q \leq n+\tilde\delta-1,\\
0 & \text{otherwise}.
\end{cases}
\]
For any $|p|\leq \kappa-1,$
let \begin{equation*}
    \mathcal{I}_{p}\coloneqq\{q\in\mathcal{D}:n\leq q\leq n+\tilde\delta-1\quad\text{and}\quad n\leq q+p\leq n+\tilde{\delta}-1\}.
\end{equation*} One may check that 
\begin{equation*}
    \mathcal{I}_{p} = \begin{cases}
     [n-p, n+\tilde\delta-1]\cap\mathbb{Z}&\text{if }p<0,\\
      [n, n+\tilde\delta-1-p]\cap\mathbb{Z}&\text{if }p\geq 0.
    \end{cases}
\end{equation*}
Therefore, making a simple change of variables  in the case $p<0,$ we have that in either case
\begin{align*}\left|
\mathbf{F_{d}}\left(\mathbf{z}\circ S_{p}\overline{\mathbf{z}}\right)_{q}\right|
&=\frac{1}{d}\bigg|\sum_{\ell\in\mathcal{I}_{p}}z_\ell\overline{z_{p+\ell}}\mathbbm{e}^{-2\pi\mathbbm{i}\ell q/d}\bigg|=\frac{1}{d}\bigg|\sum_{\ell=n}^{n+\tilde\delta-1-|p|}z_{\ell}\overline{z_{\ell+|p|}}\mathbbm{e}^{\mathbbm{i}\phi_{p,q,\ell}}\bigg|,
\end{align*} 
where $\mathbbm{e}^{\mathbbm{i}\phi_{p,q,\ell}}$ is a unimodular complex number depending on $p,q$ and $\ell.$
Using the assumptions \eqref{eqn: big z n} and \eqref{eqn: zs decrease}
we see that
\begin{align*}
 \bigg|\frac{1}{d}\sum_{\ell=n+1}^{n+\tilde\delta-1-|p|}z_\ell\overline{z_{\ell+|p|}}\mathbbm{e}^{\mathbbm{i}\phi_{p,q,\ell}}\bigg|
 \leq \frac{\tilde{\delta} }{d} \left|z_{n+1}\right|\left|z_{n+1+|p|}\right|
 \leq\frac{1}{2d} |z_n||z_{n+|p|}|.
\end{align*}
With this,
\begin{align*}
\left|F_{d}\left(\mathbf{z}\circ S_{p}\overline{\mathbf{z}}\right)_{q}\right| &=\bigg|\frac{1}{d}\sum_{\ell=n}^{n+\tilde\delta-1-|p|}z_{\ell}\overline{z_{\ell+|p|}}\mathbbm{e}^{\mathbbm{i}\phi_{p,q,\ell}}\bigg|
\geq \frac{1}{d} |z_n||z_{n+|p|}| - \bigg|\frac{1}{d}\sum_{\ell=n+1}^{n+\tilde\delta-1-|p|}z_\ell
\overline{z_{\ell+|p|}}\mathbbm{e}^{\mathbbm{i}\phi_{p,q,\ell}}\bigg|\\
&\geq\frac{1}{2d}|z_n||z_{n+|p|}|
\geq\frac{1}{2d}|z_n||z_{n+\kappa-1}|.
\end{align*}

\end{proof}

\section{The Proof of Lemma \ref{la:phase-error}} 
\begin{proof}\label{app: pf La phase-erro}

 Our proof requires the following sublemma which shows that, if $n\in L_f$, then Algorithm \ref{alg: greedy entry selection} used in the definition of $\alpha_n$ will only select indices $n_\ell$ corresponding to large Fourier coefficients. 
\begin{lemma}\label{lem: allbig}
Let $n\in L_f$, and let $n_0,\ldots, n_b$ be the sequence of indices as introduced in the definition of $\alpha_n$. Then
\begin{equation*}|\widehat{f}(n_\ell)|\geq \frac{|\widehat{f}(n)|}{2} 
\end{equation*} for all $0\leq \ell\leq b$.
\end{lemma}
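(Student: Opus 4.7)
The plan is to proceed by induction on $\ell$, using the magnitude approximation bound \eqref{la:mag-error} to convert between the $a_m$'s and the $|\widehat{f}(m)|$'s at the cost of one term $\epsilon := \sqrt{3\|\mathbf{N}\|_\infty}$ per conversion. Since $n \in L_f$ yields $|\widehat{f}(n)| \geq 4\epsilon$, the target inequality $|\widehat{f}(n_\ell)| \geq |\widehat{f}(n)|/2$ is equivalent to $|\widehat{f}(n_\ell)| \geq |\widehat{f}(n)| - 2\epsilon$, which is precisely what one round trip through \eqref{la:mag-error} can afford.

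For the base case $\ell = 0$, I would use that $n_0 = \argmax_{m \in \mathcal{S}} a_m$ and $n \in \mathcal{S}$, so $a_{n_0} \geq a_n$. Applying \eqref{la:mag-error} on each side then gives $|\widehat{f}(n_0)| \geq a_{n_0} - \epsilon \geq a_n - \epsilon \geq |\widehat{f}(n)| - 2\epsilon \geq |\widehat{f}(n)|/2$.

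For the inductive step, let $W_\ell$ denote the window of $\beta$ consecutive integers over which Algorithm \ref{alg: greedy entry selection} takes the argmax to produce $n_{\ell+1}$. The crux is to exhibit an index $m^\ast \in W_\ell$ with $|\widehat{f}(m^\ast)| \geq |\widehat{f}(n)|$: once that is in hand, the same two applications of \eqref{la:mag-error} yield $|\widehat{f}(n_{\ell+1})| \geq a_{n_{\ell+1}} - \epsilon \geq a_{m^\ast} - \epsilon \geq |\widehat{f}(n)| - 2\epsilon \geq |\widehat{f}(n)|/2$. To produce $m^\ast$, I would invoke Remark \ref{la:n-sequence}: it suffices to locate an integer $a$ near the middle of $W_\ell$ such that the string of $\beta - 1$ consecutive integers centered at $a$ lies inside $W_\ell$ (automatic since $W_\ell$ already has $\beta$ integers) and such that $|a| < |n|$.

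The hard part will be this geometric verification that $|a| < |n|$. By construction $W_\ell$ lies strictly between $n_\ell$ and $n$ in the monotone direction of travel dictated by the algorithm (using $\gamma \geq 2\beta$), so a midpoint $a$ of $W_\ell$ lies in the same open interval; when $a$ has the same sign as $n$ the bound $|a| < |n|$ is immediate. The delicate case is when $a$ has the opposite sign from $n$, where one must use the while-loop invariant $|n - n_\ell| \geq \gamma$, the placement of $W_\ell$ inside $(n_\ell, n)$ (or its mirror), and the relation $\beta \leq \gamma/2$ to rule out $|a| \geq |n|$. I expect this case analysis to be the main obstacle; once it is resolved, the rest of the inductive step is the routine application of \eqref{la:mag-error} described above.
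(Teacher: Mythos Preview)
Your approach matches the paper's in substance: both argue that each argmax window is centered at some $a$ with $|a|\le |n|$, invoke Remark~\ref{la:n-sequence} to produce $m^\ast$ with $|\widehat f(m^\ast)|\ge |\widehat f(n)|$, and convert via two applications of \eqref{la:mag-error} to obtain $|\widehat f(n_\ell)|\ge |\widehat f(n)|-2\epsilon\ge |\widehat f(n)|/2$. Two comments.

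First, the induction is cosmetic. You never invoke the hypothesis $|\widehat f(n_\ell)|\ge |\widehat f(n)|/2$ in the step, so nothing is gained over the paper's direct argument, which handles $\ell=b$ trivially (since $n_b=n$) and treats all $0\le \ell\le b-1$ uniformly by simply asserting that $n_\ell$ maximizes $a_m$ over an interval $I_\ell$ of length $\beta$ centered at some $a$ with $|a|\le |n|$. Your base case is precisely the paper's $\ell=0$ instance, with $I_0\subset\mathcal S$ centered at $0$.

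Second, your partial argument for the geometric claim is not right. The assertion ``when $a$ has the same sign as $n$ the bound $|a|<|n|$ is immediate'' fails whenever the path moves \emph{toward} the origin: if, say, $n_0>n>0$, the algorithm moves left, each window $W_\ell$ lies in $(n,n_\ell]$, and its center $a>n$ gives $|a|>|n|$, not $|a|<|n|$. So the ``delicate'' opposite-sign case is not the only obstacle. The paper does not address this either---it just asserts $|a|\le |n|$ without proof---so you are wrestling with a point the paper glosses over; but be aware that your proposed case split does not resolve it, and the inductive hypothesis (which says nothing about the \emph{location} $|n_\ell|$) cannot help.
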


\begin{proof}When $\ell=b$, the claim is immediate from the fact that $n_b=n$. 
For all $0\leq \ell\leq b-1$, the definition of $n_\ell$ implies that there exists an interval $I_\ell$ of length $\beta$, which is centered at some point $a$ with $|a|\leq |n|$, such that
\begin{equation*}
    a_{n_\ell}=\max_{m\in I_\ell} a_m.
\end{equation*} 
Letting $\epsilon=\sqrt{3\|\mathbf{N}\|_\infty}$, we see that  by \eqref{la:mag-error} and Remark \ref{la:n-sequence}
\begin{align*}
    |\widehat{f}(n_\ell)|&\geq a_{n_\ell}-\epsilon
    =\max_{m\in I_\ell} a_m-\epsilon
    \geq \max_{m\in I_\ell} |\widehat{f}(m)|-2\epsilon
    \geq |\widehat{f}(n)|-2\epsilon.
\end{align*}
The result now follows from noting that $\epsilon<\frac{|\widehat{f}(n)|}{4}$ for all $n\in L_f$.  \end{proof}

With Lemma \ref{lem: allbig} established, we may now prove Lemma \ref{la:phase-error}.
Let $n\in L_f$ and let $n_0,\ldots n_b$ be the sequence describe in the definition of $\alpha_n$. 
For $0\leq \ell\leq b-1$, let  $t_{\ell} \coloneqq \widehat{f}(n_{\ell+1})\overline{\widehat{f}(n_{\ell})}$,
$a'_{\ell} \coloneqq \widehat{f}(n_{\ell+1})\overline{\widehat{f}(n_{\ell})}+N_{n_{\ell+1},n_{\ell}}$, and 
 $N'_{\ell}\coloneqq N_{n_{\ell+1},n_{\ell}}$.
Consider the triangle with sides $a'_{\ell}$, $t_{\ell}$, and $N'_{\ell}$ with angles $\theta_{\ell} =|\arg(a'_{\ell})-\arg(t_{\ell})|$ and $\phi_{\ell} = |\arg(a'_{\ell})-\arg(N'_{\ell})|$, as
illustrated in Figure~\ref{fig:Triangle}.
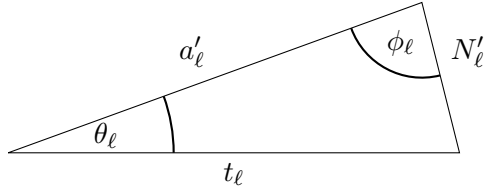
\begin{figure}[h!]
\centering \begin{tikzpicture}
\RectTri[black]{(0,0)}{(6,0)}{(5.5,2)}
\end{tikzpicture}
\caption{Triangle in the complex domain. }\label{fig:Triangle}
\end{figure}
By the law of sines and Lemma \ref{lem: allbig}, we get that 
\begin{equation}\label{eq:sin-estimate}
|\sin(\theta_{\ell})|= \left|\frac{N_{\ell}'}{t_{\ell}}\sin(\phi_{\ell})\right|\leq \frac{\|\mathbf{N}\|_\infty}{|\widehat{f}(n_{\ell})||\widehat{f}(n_{\ell+1})|}\leq \frac{4\|\mathbf{N}\|_\infty}{|\widehat{f}(n)|^2}
\end{equation}
for all $0\leq \ell\leq b$. By  the definition of $L_f$ and  Lemma \ref{lem: allbig}, we have that for all $\ell$ 
 \begin{equation*}
     |N'_\ell|\leq \|\mathbf{N}\|_\infty \leq \frac{|\widehat{f(n)}|^2}{4}\leq  |\widehat{f}(n_\ell)||\widehat{f}(n_{\ell+1})|=|t_\ell|. 
 \end{equation*} Therefore,
$0\leq \theta_\ell\leq \frac{\pi}{2}$, and so by  \eqref{eq:sin-estimate}, we have
\begin{equation*}
|\theta_{\ell}|\leq \frac{\pi}{2}\,|\sin(\theta_{\ell})|\leq 2\pi\,\frac{\|\mathbf{N}\|_\infty}{|\widehat{f}(n)|^2}.
\end{equation*}

By definition $\tau_{n}=\sum_{\ell=0}^{b-1} \arg(t_{\ell})$ and $
\alpha_{n}=\sum_{l=0}^{b-1} \arg(a'_{\ell})
$. Therefore,  
we have
\begin{align*}
|\mathbbm{e}^{\mathbbm{i}\tau_{n}}-\mathbbm{e}^{\mathbbm{i}\alpha_{n}}| \leq |\alpha_{n}-\tau_{n}|&=\bigg|\sum_{\ell=0}^{b-1} \arg(a'_{\ell})-\arg(t_{\ell}) \bigg|
=\bigg|\sum_{\ell=0}^{b-1} \theta_{\ell}\bigg| \leq 2\pi b\frac{\|\mathbf{N}\|_\infty}{|\widehat{f}(n)|^2}.
\end{align*}
From the definition of $n_\ell$, we have
 \begin{equation*}
 |n_\ell-n_{\ell-1}|\geq \gamma-\beta\geq \frac{\gamma}{2}
 \end{equation*}
 for all $1\leq \ell\leq b-1$. Therefore, the path length $b$ is bounded by 
 \begin{equation*}
     b\leq \frac{|n-n_0|}{\min |n_\ell-n_{\ell-1}|}\leq  \frac{2d}{\gamma}.
 \end{equation*}
 Thus, we have 
\begin{align*}
|\mathbbm{e}^{\mathbbm{i}\tau_{n}}-\mathbbm{e}^{\mathbbm{i}\alpha_{n}}|  \leq 2\pi b\frac{\|\mathbf{N}\|_\infty}{|\widehat{f}(n)|^2}
\leq \frac{4\pi d}{\gamma}\frac{\|\mathbf{N}\|_\infty}{|\widehat{f}(n)|^2}
\end{align*}
as desired.

\end{proof}

\section{Additional Numerical Simulations using Algorithms \ref{Big Algorithm} and \ref{Big Algorithm Compact}}
\label{sec:appendix_numerics}
\begin{figure}[htbp]
\centering
\begin{subfigure}[b]{0.245\textwidth}
\includegraphics[clip=true, trim = .85in 2.5in 0.85in 2.5in,scale=0.245]{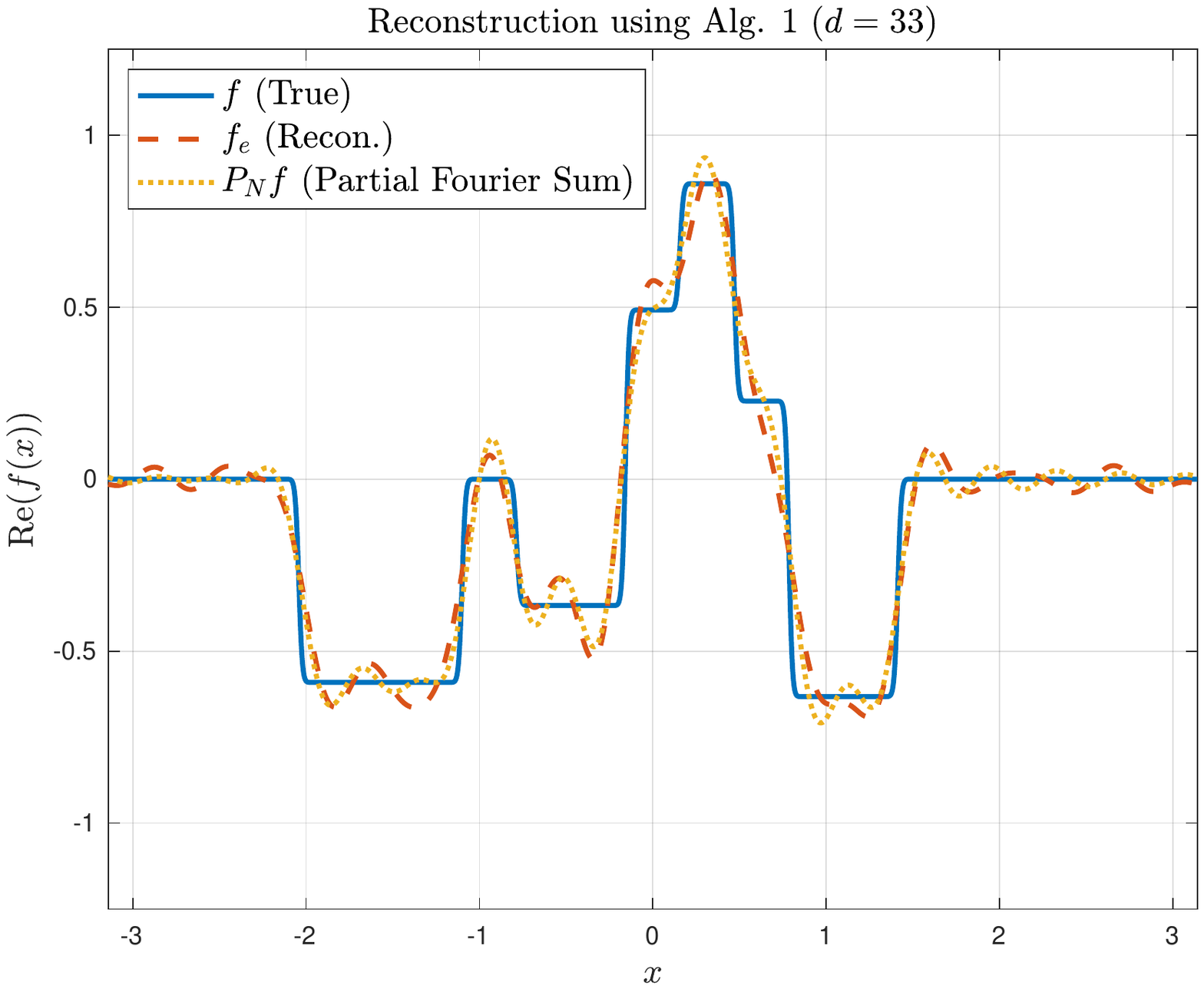}
\caption{$d=33$}
\end{subfigure}
\hfill
\begin{subfigure}[b]{0.245\textwidth}
\includegraphics[clip=true, trim = .85in 2.5in 0.85in 2.5in,scale=0.245]{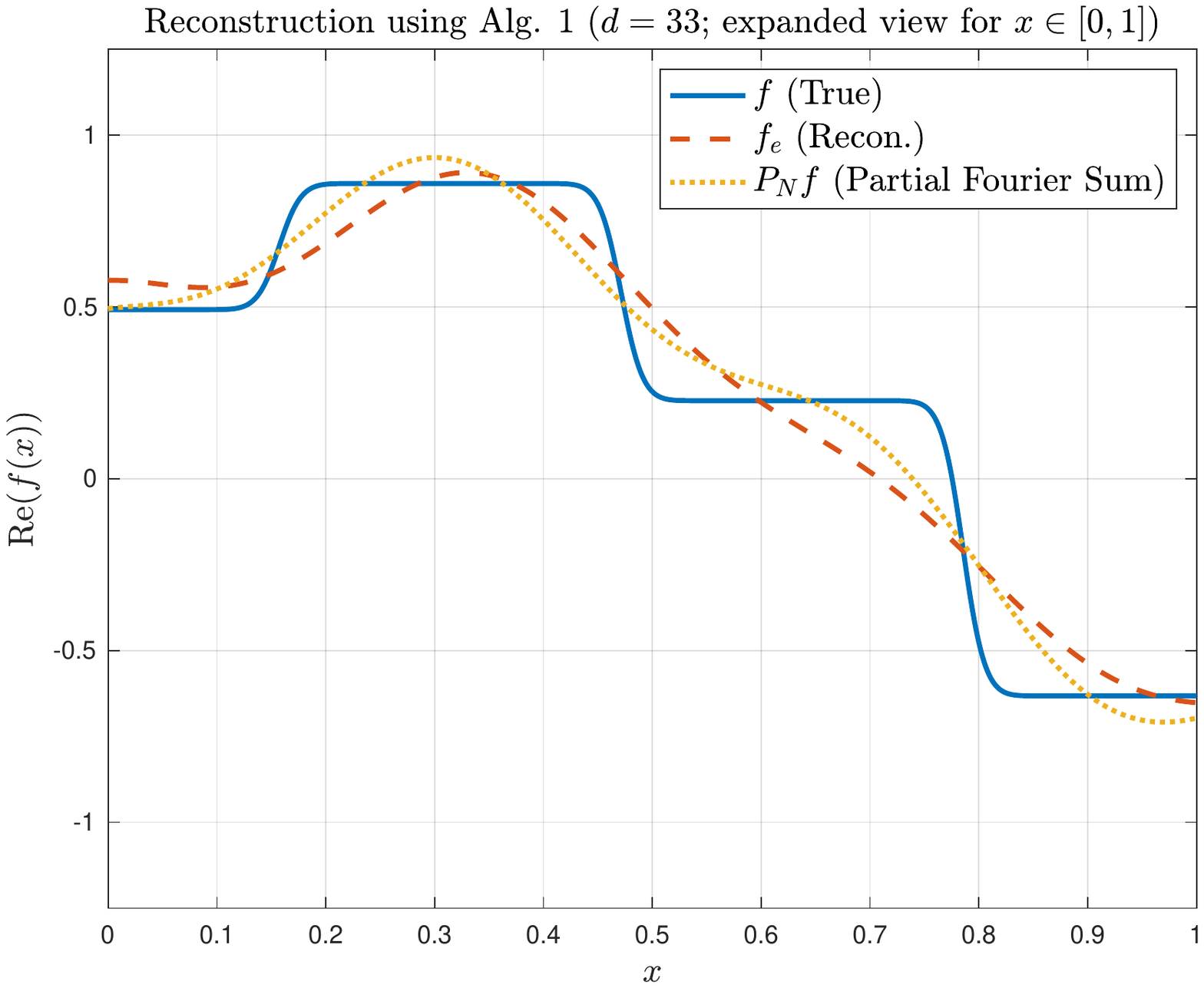}
\caption{$d=33$ (zoom)}
\end{subfigure}
\hfill
\begin{subfigure}[b]{0.245\textwidth}
\includegraphics[clip=true, trim = .85in 2.5in 0.85in 2.5in,scale=0.245]{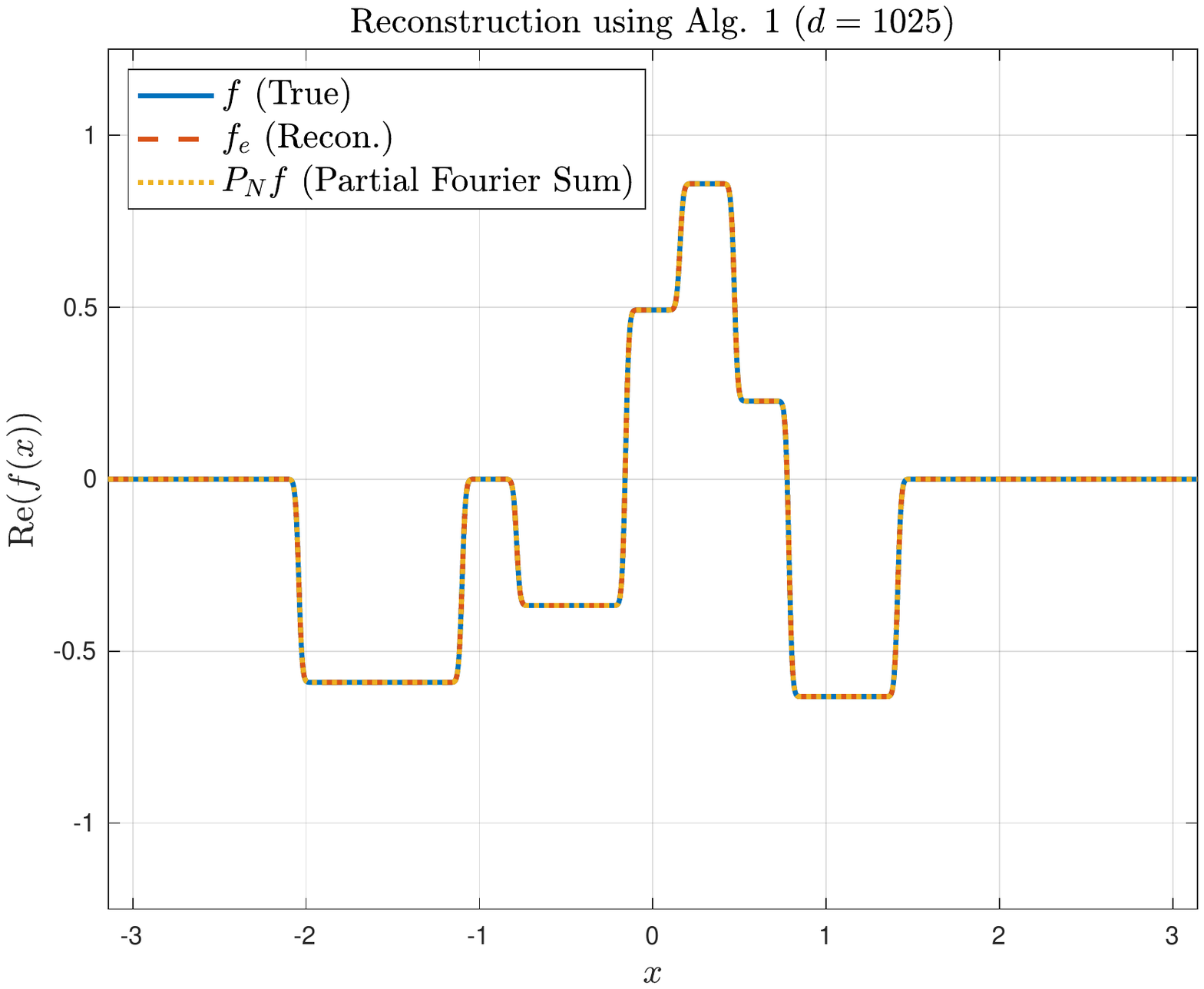}
\caption{$d=1025$}
\end{subfigure} 
\hfill
\begin{subfigure}[b]{0.245\textwidth}
\includegraphics[clip=true, trim = .85in 2.5in 0.85in 2.5in,scale=0.245]{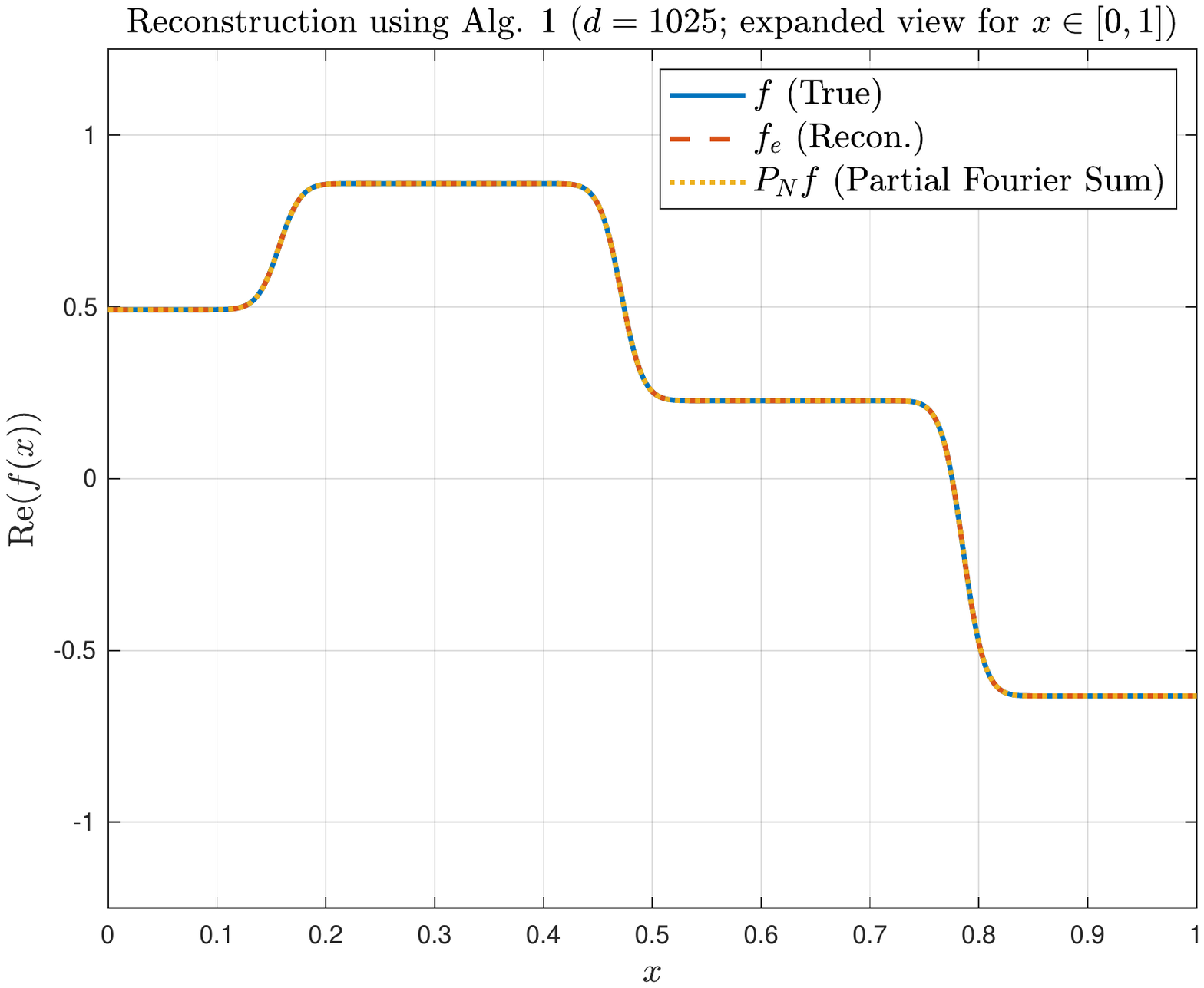}
\caption{$d=1025$ (zoom)}
\end{subfigure}
\caption{Evaluating the convergence behavior of Algorithm \ref{Big Algorithm}. Figure plots reconstructions of the real 
part of the test function at $d=33$ and $d=1025$ (along with an expanded view of the reconstruction in $[0,1]$) on a discrete equispaced grid in 
$[-\pi, \pi]$ of $7003$ points; we set $\rho=\min \{ (d-5)/2, 16\lfloor\log_2(d)\rfloor\}$ and $\kappa=\rho-1$.}
\label{fig:conv_alg1}
\end{figure}
In this section, we provide additional numerical simulations studying the empirical convergence behavior of Algorithms \ref{Big Algorithm} and \ref{Big Algorithm Compact}. We start with a study of the convergence behavior of Algorithm \ref{Big Algorithm}. Here, we reconstruct the same test function 
using different discretization sizes $d$ (with $\rho$ chosen to be $\min \{ (d-5)/2, 16\lfloor\log_2(d)\rfloor\}$ and $\kappa=\rho-1$), where the total number of 
phaseless measurements used is $Ld=(2\rho-1)d$. Fig. \ref{fig:conv_alg1} plots representative reconstructions (of the real part of the test function) for two choices of $d$ ($d=33$ and $d=1025$). We note that the (smooth) test function illustrated in the figure has several sharp and closely separated gradients, making the reconstruction process challenging. This is evident in the partial Fourier sums ($P_Nf$) plotted for reference alongside the reconstructions from Algorithm \ref{Big Algorithm} ($f_e$). For small $d$ and $\rho$, we observe  oscillatory behavior similar to that seen in the Gibbs phenomenon. Nevertheless, we see that the proposed algorithm closely tracks the performance of the partial Fourier sum, with reconstruction quality improving significantly as $d$ (and $\rho$) increases. 
\begin{figure}[htbp]
\centering
\begin{subfigure}[b]{0.245\textwidth}
\includegraphics[clip=true, trim = .85in 2.5in 0.85in 2.5in,scale=0.245]{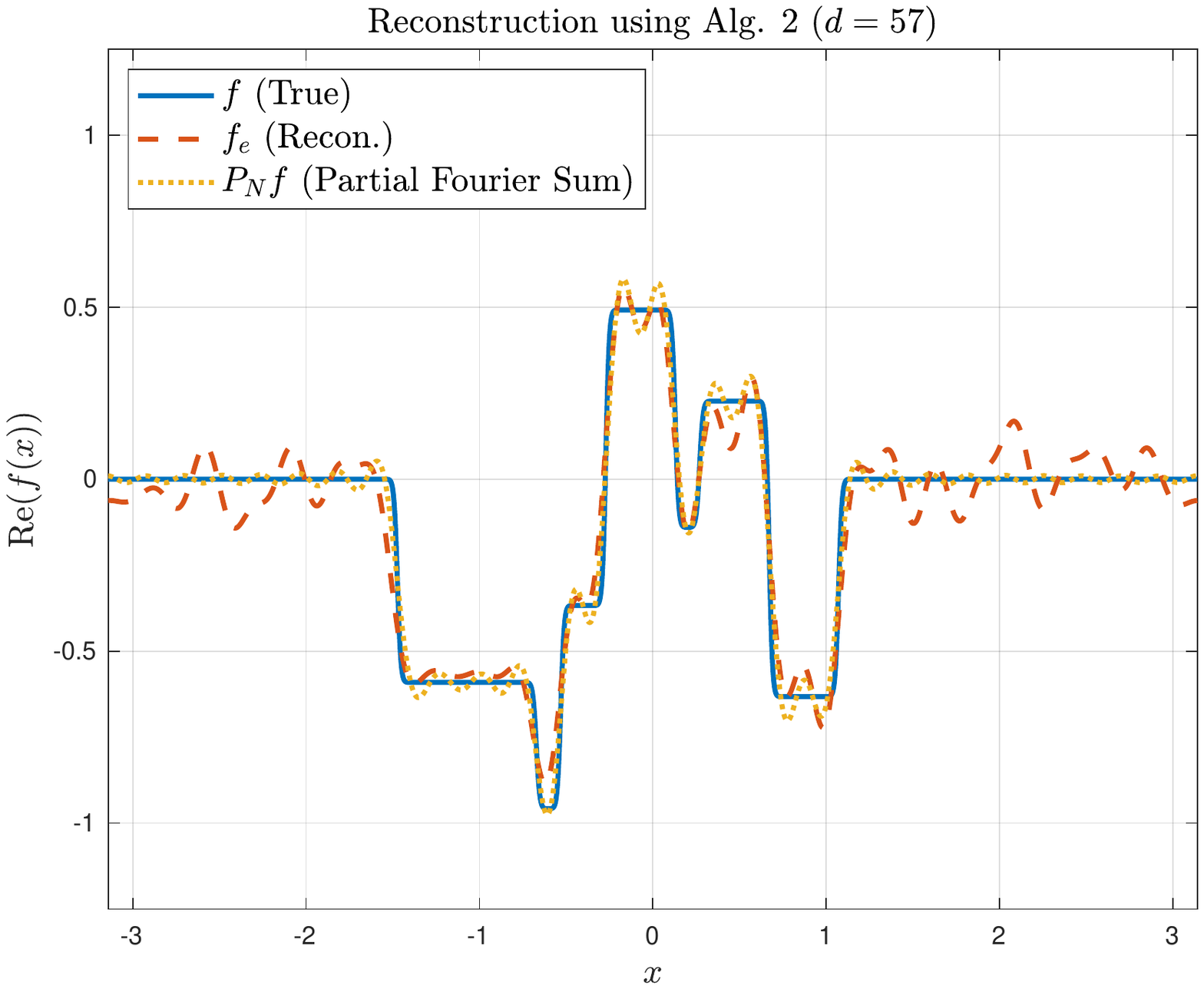}
\caption{$d=57$}
\end{subfigure}
\hfill
\begin{subfigure}[b]{0.245\textwidth}
\includegraphics[clip=true, trim = .85in 2.5in 0.85in 2.5in,scale=0.245]{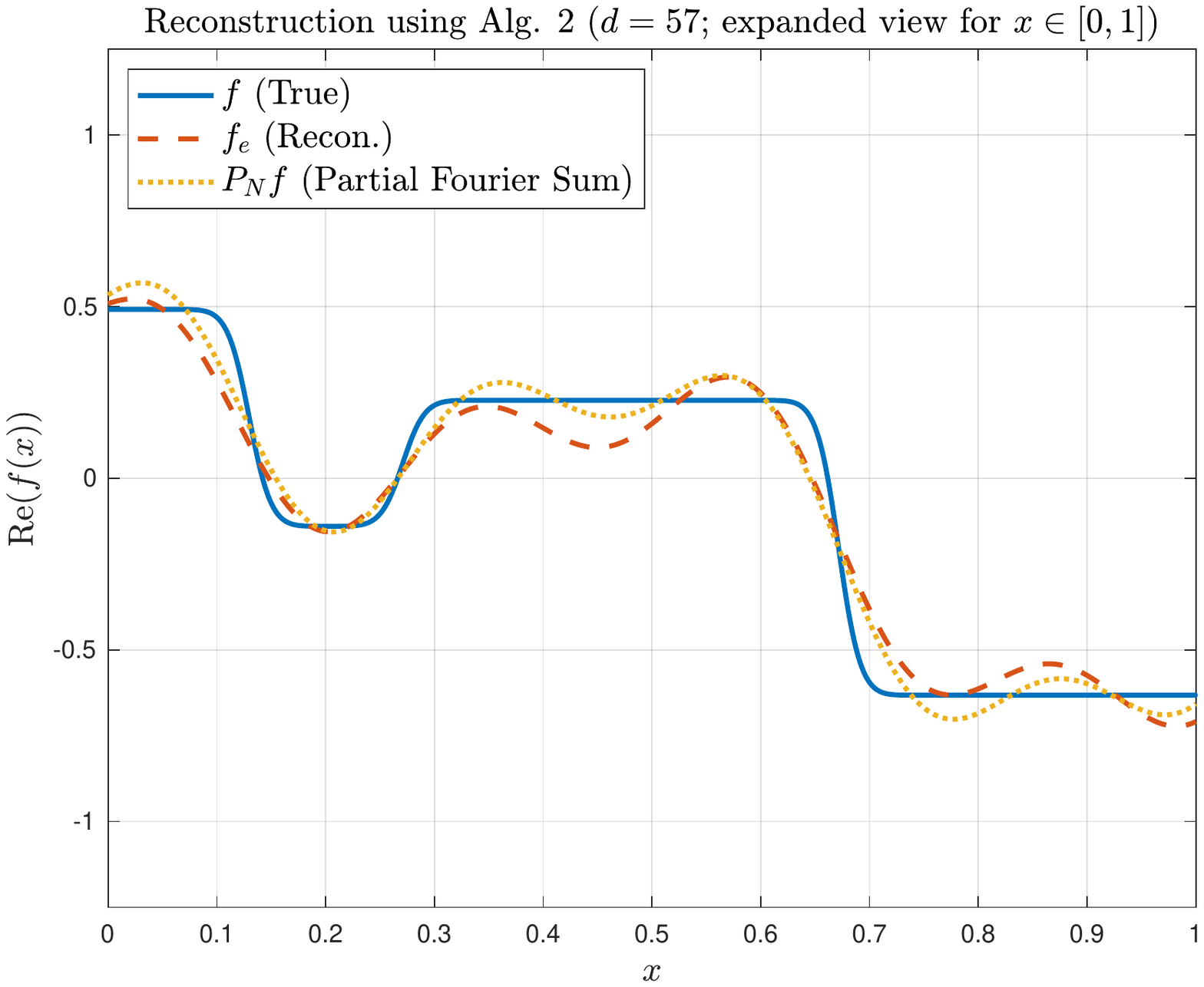}
\caption{$d=57$ (zoom)}
\end{subfigure}
\hfill
\begin{subfigure}[b]{0.245\textwidth}
\includegraphics[clip=true, trim = .85in 2.5in 0.85in 2.5in,scale=0.245]{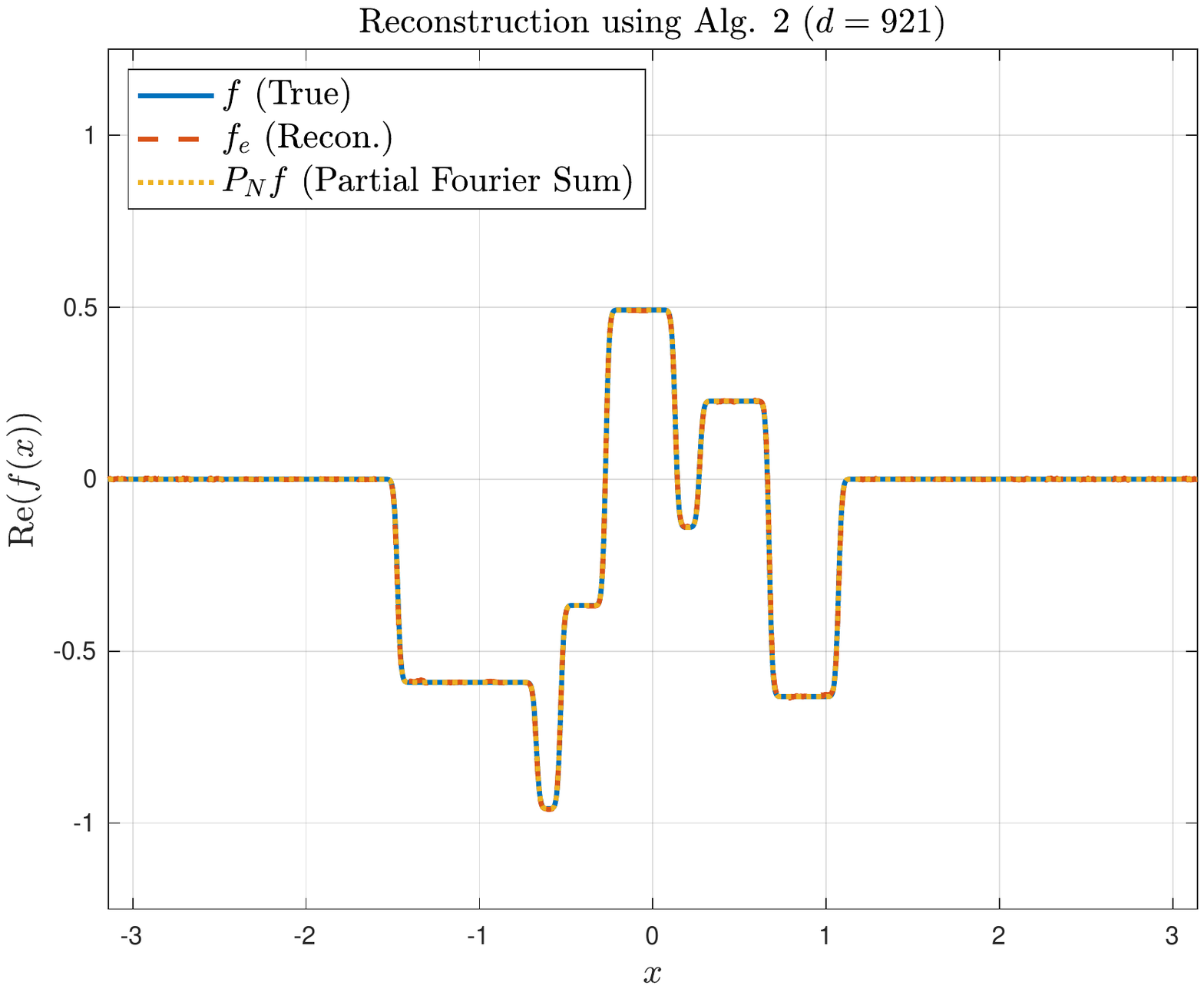}
\caption{$d=921$}
\end{subfigure} 
\hfill
\begin{subfigure}[b]{0.245\textwidth}
\includegraphics[clip=true, trim = .85in 2.5in 0.85in 2.5in,scale=0.245]{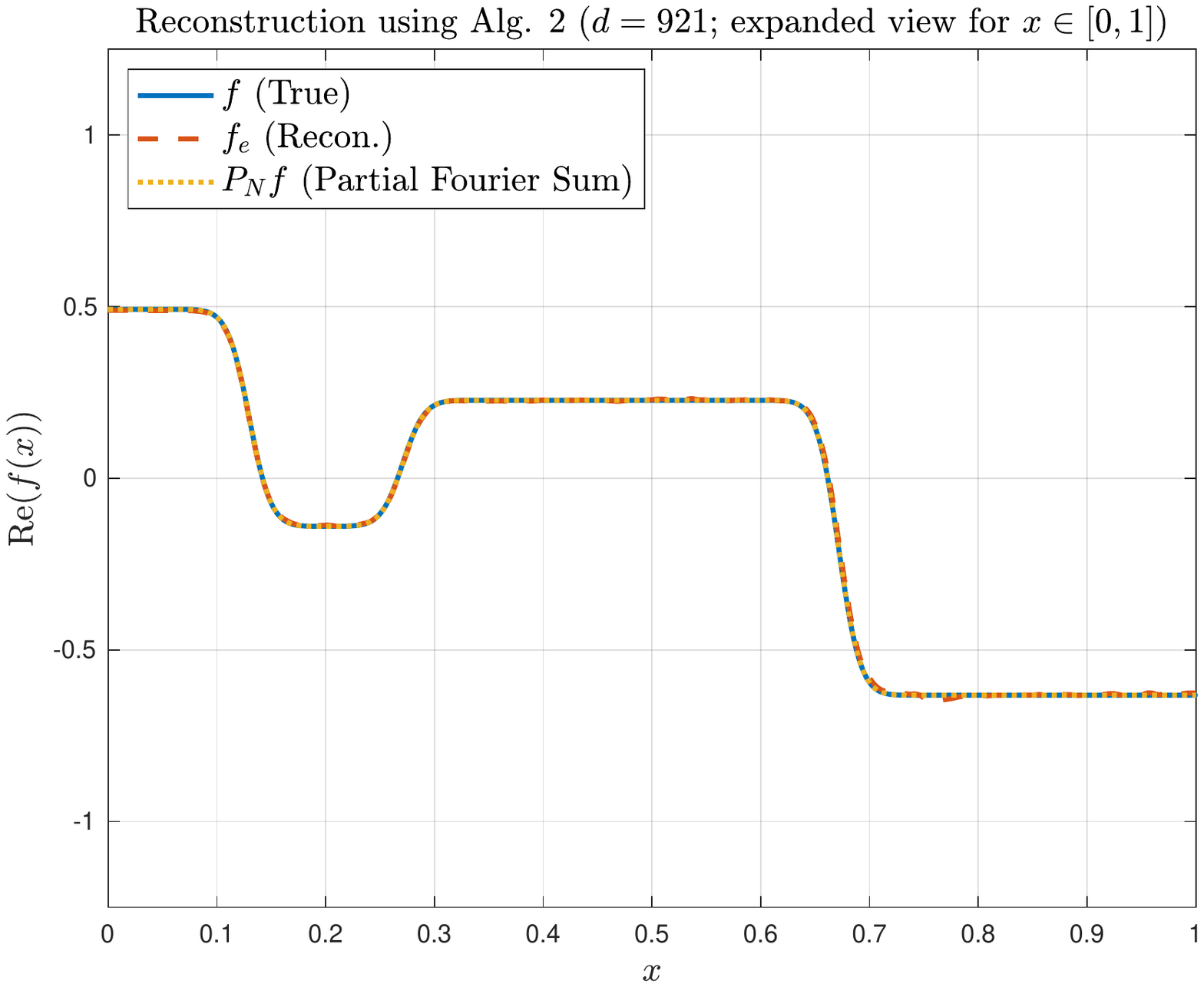}
\caption{$d=921$ (zoom)}
\end{subfigure}
\caption{Evaluating the convergence behavior of Algorithm \ref{Big Algorithm Compact}. Figure plots reconstructions of the real 
part of the test function at $d=57$ and $d=921$ (along with an expanded view of the reconstruction in $[0,1]$) on a discrete equispaced grid in 
$[-\pi, \pi]$ of $7003$ points; we set $K=d/3,\delta=(K+1)/2$ and $\kappa=\delta-1$.}
\label{fig:conv_alg2}
\end{figure}
%

We next evaluate the convergence behavior of Algorithm\footnote{using the alternate implementation -- with (\ref{eq:FYF_single_aliasing}) utilized in place of (\ref{eq:FYF_single_aliasing-modified}) in Step $2$ of the Algorithm -- as described in Section \ref{sec:Eval}} \ref{Big Algorithm Compact} by reconstructing the same test function 
using different discretization sizes $d$ (with $K=d/3$, $\delta=(K+1)/2$, $\kappa=\delta-1$ and $s=\kappa-1$). Fig. \ref{fig:conv_alg2} plots representative reconstructions (of the real part of the test function) for two choices of $d$ ($d=57$ and $d=921$). As in Fig. \ref{fig:conv_alg1}, we note that the (smooth) test function has several sharp and closely separated gradients, making the reconstruction process challenging. Again, the partial Fourier sums ($P_Nf$) plotted alongside the reconstructions from Algorithm \ref{Big Algorithm Compact} ($f_e$) exhibit Gibbs-like oscillatory behavior for small $d$ and $\kappa$. Nevertheless, we see that the proposed algorithm closely tracks the performance of the partial Fourier sum, with reconstruction quality improving significantly as $d$ (and $\delta,\kappa$) increases.

\begin{thebibliography}{10}

\bibitem{alaifari2019stable}
Rima Alaifari, Ingrid Daubechies, Philipp Grohs, and Rujie Yin.
\newblock Stable phase retrieval in infinite dimensions.
\newblock {\em Foundations of Computational Mathematics}, 19(4):869--900, 2019.

\bibitem{Alaifari2020}
Rima Alaifari and Matthias Wellershoff.
\newblock Uniqueness of stft phase retrieval for bandlimited functions.
\newblock {\em Applied and Computational Harmonic Analysis}, 50:34 -- 48, 2021.

\bibitem{alexeev2014phase}
B.~Alexeev, A.~S. Bandeira, M.~Fickus, and D.~G. Mixon.
\newblock {Phase Retrieval with Polarization}.
\newblock {\em SIAM Journal on Imaging Sciences}, 7(1):35--66, 2014.

\bibitem{balan2006signal}
Radu Balan, Pete Casazza, and Dan Edidin.
\newblock On signal reconstruction without phase.
\newblock {\em Applied and Computational Harmonic Analysis}, 20(3):345--356,
  2006.

\bibitem{bauschke2002phase}
Heinz~H Bauschke, Patrick~L Combettes, and D~Russell Luke.
\newblock Phase retrieval, error reduction algorithm, and {F}ienup variants:
  {A} view from convex optimization.
\newblock {\em Journal of the Optical Society of America. A, Optics, Image
  science, and Vision}, 19(7):1334--1345, 2002.

\bibitem{buccini2017iterated}
Alessandro Buccini, Marco Donatelli, and Lothar Reichel.
\newblock Iterated tikhonov regularization with a general penalty term.
\newblock {\em Numerical Linear Algebra with Applications}, 24(4):2089, 2017.

\bibitem{candes2015phase}
Emmanuel~J Cand{\`{e}}s, Yonina~C Eldar, Thomas Strohmer, and Vladislav
  Voroninski.
\newblock Phase retrieval via matrix completion.
\newblock {\em SIAM review}, 57(2):225--251, 2015.

\bibitem{CANDES2015277}
Emmanuel~J. Cand{\`{e}}s, Xiaodong Li, and Mahdi Soltanolkotabi.
\newblock Phase retrieval from coded diffraction patterns.
\newblock {\em Applied and Computational Harmonic Analysis}, 39(2):277 -- 299,
  2015.

\bibitem{Cheng2020}
Cheng Cheng, Ingrid Daubechies, Nadav Dym, and Jianfeng Lu.
\newblock Stable phase retrieval from locally stable and conditionally
  connected measurements.
\newblock {\em arXiv preprint arXiv:2006.11709}, 2020.

\bibitem{Fienup1987}
C~Fienup and J~Dainty.
\newblock Phase retrieval and image reconstruction for astronomy.
\newblock {\em Image Recovery: Theory and Application}, pages 231--275, 1987.

\bibitem{fienup1982phase}
James~R Fienup.
\newblock Phase retrieval algorithms: a comparison.
\newblock {\em Applied optics}, 21(15):2758--2769, 1982.

\bibitem{FMS20}
F.~Filbir, F.~Krahmer, and O.~Melnyk.
\newblock On recovery guarantees for angular synchronization.
\newblock arXiv preprint arXiv 2005.02032, 2020.

\bibitem{FKMS20}
A.~Forstner, F.~Krahmer, O.~Melnyk, and N.~Sissouno.
\newblock Well conditioned ptychograpic imaging via lost subspace completion.
\newblock {\em Inverse Problems}, 2020.
\newblock arXiv preprint arXiv 2004.04458.

\bibitem{GSaxtonAltProj}
R.W. Gerchberg and W.O. Saxton.
\newblock {A Practical Algorithm for the Determination of Phase from Image and
  Diffraction Plane Pictures}.
\newblock {\em Optik}, 35:237--246, 1972.

\bibitem{griffin1984signal}
Daniel Griffin and Jae Lim.
\newblock Signal estimation from modified short-time fourier transform.
\newblock {\em IEEE Transactions on Acoustics, Speech, and Signal Processing},
  32(2):236--243, 1984.

\bibitem{G2019}
Karlheinz Gr{\"{o}}chenig.
\newblock Phase-retrieval in shift-invariant spaces with gaussian generator.
\newblock {\em Journal of Fourier Analysis and Applications}, 26, 2020.

\bibitem{gross2015improved}
David Gross, Felix Krahmer, and Richard Kueng.
\newblock Improved recovery guarantees for phase retrieval from coded
  diffraction patterns.
\newblock {\em Applied and Computational Harmonic Analysis}, 42:37 -- 64, 2017.

\bibitem{IPSV18}
M.~A. Iwen, B.~Preskitt, R.~Saab, and A.~Viswanathan.
\newblock {Phase retrieval from local measurements: improved robustness via
  eigenvector-based angular synchronization}.
\newblock {\em {Applied and Computational Harmonic Analysis}}, 48:415 -- 444,
  2020.

\bibitem{IMP2019}
Mark~A Iwen, Sami Merhi, and Michael Perlmutter.
\newblock Lower {L}ipschitz bounds for phase retrieval from locally supported
  measurements.
\newblock {\em Applied and Computational Harmonic Analysis}, 2019.

\bibitem{IVW16}
Mark~A. Iwen, Aditya Viswanathan, and Yang Wang.
\newblock {Fast Phase Retrieval from Local Correlation Measurements}.
\newblock {\em SIAM Journal on Imaging Sciences}, 9(4):1655--1688, 2016.

\bibitem{marchesini2016alternating}
Stefano Marchesini, Yu-Chao Tu, and Hau-tieng Wu.
\newblock Alternating projection, ptychographic imaging and phase
  synchronization.
\newblock {\em Applied and Computational Harmonic Analysis}, 41(3):815--851,
  2016.

\bibitem{Perlmutter2020}
Michael Perlmutter, Sami Merhi, Aditya Viswanathan, and Mark Iwen.
\newblock Inverting spectrogram measurements via aliased wigner distribution
  deconvolution and angular synchronization.
\newblock {\em Information and Inference: A Journal of the IMA}, 2020.

\bibitem{R2008ptychography}
JM~Rodenburg.
\newblock Ptychography and related diffractive imaging methods.
\newblock {\em Advances in Imaging and Electron Physics}, 150:87--184, 2008.

\bibitem{thakur2011reconstruction}
Gaurav Thakur.
\newblock Reconstruction of bandlimited functions from unsigned samples.
\newblock {\em J. Fourier Anal. Appl.}, 17(4):720--732, 2011.

\bibitem{walther1963question}
Adriaan Walther.
\newblock The question of phase retrieval in optics.
\newblock {\em Optica Acta: International Journal of Optics}, 10(1):41--49,
  1963.

\end{thebibliography}

\end{document}